\numberwithin{equation}{section}
\newtheorem{thmA}{Theorem}
\newtheorem{lemA}{Lemma}
\newtheorem{thm}{Theorem}[section]
\newtheorem{lem}{Lemma}[section]
\newtheorem{rem}{Remark}[section]
\newtheorem{prop}{Proposition}[section]
\newtheorem{cor}{Corollary}[section]
\begin{document}

\title[Singular Mean Field Equation]{Uniqueness of bubbling solutions of mean field equations with non-quantized singularities}
\keywords{Singular mean field equations, Blow-up solutions, Singular source, Uniqueness results, asymptotic behavior}

\author{Lina Wu}\footnote{Lina Wu is partially supported by the China Scholarship Council (No.201806210165).}

\author{Lei Zhang}\footnote{Lei Zhang is partially supported by a Simons Foundation Collaboration Grant}

\address{Department of Mathematical Sciences \\
	Tsinghua University \\
	No.1 Qinghuayuan, Haidian District,
	Beijing China, 100084 }
\email{wln16@mails.tsinghua.edu.cn}

\address{Department of Mathematics\\
        University of Florida\\
        1400 Stadium Rd\\
        Gainesville FL 32611}
\email{leizhang@ufl.edu}

\date{\today}

\begin{abstract}
For singular mean field equations defined on a compact Riemann surface, we prove the uniqueness of bubbling solutions if some blowup points coincide with bubbling sources. If the strength of the bubbling sources at blowup points are not multiple of $4\pi$ we prove that bubbling solutions are unique under non-degeneracy assumptions. This work extends a previous work of Bartolucci, et, al \cite{bart-4}.
\end{abstract}

 \maketitle

\section{Introduction}

The main goal of this article is to study the uniqueness property of the following mean field equations with singularities:
\begin{equation}\label{m-equ}
\Delta_g v+\rho\bigg(\frac{he^v}{\int_M h e^v{\rm d}\mu}-\frac{1}{vol_g(M)}\bigg)=\sum_{j=1}^N 4\pi \alpha_j (\delta_{q_j}-\frac{1}{vol_g(M)}) \quad {\rm in} \ \; M,
\end{equation}
where $(M,g)$ be a Riemann surface with the metric $g$, $\Delta_g$ is the Laplace-Beltrami operator ($-\Delta_g\ge 0$), $h$ is a positive smooth function on $M$, $q_1,\cdots,q_N$ are distinct points on $M$, $\rho>0,\alpha_j>-1$ are constants, $\delta_{q_j}$ is the Dirac measure at $q_j\in M$. Equation (\ref{m-equ}) is one of the most extensively studied elliptic PDE in the past few decades, partly due to its immense and profound connections with many branches of mathematics and Physics. In conformal geometry, (\ref{m-equ}) represents a metric on M with conic singularity (see \cite{fang-lai,troy,wei-zhang-pacific}). Also it is derived from the mean
field limit of point vortices in the Euler flow \cite{caglioti-1,caglioti-2} and serves as a model equation
in the Chern-Simons-Higgs theory \cite{spruck-yang,taran-1,y-yang} and in the electroweak theory \cite{ambjorn}, etc. The literature for the study of various form of (\ref{m-equ}) is just too numerous to be listed in any reasonable way.

Recently it was found by Lin-Yan \cite{lin-yan-uniq} that the uniqueness property is particularly important for equations with concentration phenomenon. In their work \cite{lin-yan-uniq} they proved the first uniqueness property for bubbling solutions of Chern-Simon-Higgs equation and computed the exact number of solutions in certain special cases. In an important work \cite{bart-4} Bartolucci, et. al, extended Lin-Yan's result for mean field equation (\ref{m-equ}) if the blowup points are not singular sources. Our goal in this article is to further extend the uniqueness property to the case that some singular sources coincide with blowup points.

\smallskip

To write the main equation in an equivalent form, we invoke the standard Green's function$G(x,p)$:
\begin{equation}\label{gf}
\left\{\begin{array}{ll}
-\Delta_g G(x,p)=\delta_p-1\quad {\rm in}\ \; M
\\
\int_{M}G(x,p){\rm d}\mu=0,
\end{array}
\right.
\end{equation}
where the volume of $M$ is assumed to be $1$ for convenience. Then it is well known that in a neighborhood of $p$, $G(x,p)$ can be written as
$$G(x,p)=-\frac 1{2\pi }\log dist(x,p)+R(x,p)$$
where $dist(x,p)$ is the geodesic distance from $p$ to $x$ for $x$ close to $p$.

Using $G(x,p)$ we write (\ref{m-equ}) as
\begin{equation}\label{r-equ}
	\Delta_g w+\rho\bigg(\frac{He^w}{\int_M H e^w{\rm d}\mu}-1\bigg)=0 \quad {\rm in}\ \; M,
\end{equation}
where
\begin{equation}\label{r-sol}
	w(x)=v(x)+4\pi \sum_{j=1}^N \alpha_j G(x,q_j),
\end{equation}
and
\begin{equation}\label{H1}
	H(x)=h(x)\prod_{j=1}^N e^{-4\pi\alpha_j G(x,q_j)}.
\end{equation}
Note that in a local coordinate near $q_j$,
\begin{equation}\label{H2}
H(x)=h_j(x)|x-q_j|^{2\alpha_j},\quad |x-q_j|\ll 1,\quad 1\leq j\leq N,
\end{equation}
for some $h_j(x)>0$.

We say that $\{v_k\}$ is a sequence of bubbling solutions of (\ref{m-equ}) if the corresponding $w_k$ defined by (\ref{r-sol}) tends to infinity as $k$ goes to infinity. The places that $w_k$ tends to infinity are called blowup points of $v_k$ or $w_k$. In this article we use $p_1,...,p_m$ to denote blowup points. Let $q_1,...,q_N$ be the location of singular sources. If none of $p_1,...p_m$ is a singular source, Bartolucci, et. al have obtained the uniqueness of the blow up solution in \cite{bart-4}. Thus in this article we consider two cases: either all blowup points are singular sources or part of blowup points coincide with singular sources. In more precise terms let
\begin{equation}\label{pq}
\left\{\begin{array}{ll}
p_j=q_j\quad &{\rm if} \ 1\leq j \leq \tau,
\\
p_j \notin \{q_1,\cdots,q_N\}\quad &{\rm if} \ \tau+1\leq j\leq m,
\end{array}
\right.
\end{equation}
for some $1\le \tau\le m$. Thus if $\tau=m$ all blowup points are singular sources, if $1\le \tau<m$, some blowup points are singular sources and some are not. Let $4\pi\alpha_j$ be the strength of the singular source at $p_j$, so we have $\alpha_j=0$ if $j>\tau$. Since the largest $\alpha_j$ matters the most we require the first $t$ of them to have this strength:
\begin{equation}
\alpha_1=\cdots =\alpha_t>\alpha_l, \quad l\geq t+1, \quad \mbox{where } 1\le t\le \tau.
\end{equation}

It is well known that equation (\ref{r-equ}) is the Euler-Lagrange equation of the variational form:
$$I_{\rho}(w)=\frac 12 \int_M |\nabla w|^2+\rho\int_Mw-\rho \log \int_M He^w, $$
for $w\in H^1(M)$. Since adding a constant to any solution of (\ref{r-equ}) certainly gives to another solution, the space of solutions for (\ref{r-equ}) is the set of all $H^1(M)$ function with average equal to $0$. The discussion on the variational structure of (\ref{r-equ}) can be found in \cite{machiodi-1}.

\smallskip

To state the main results we use the following notations:
\begin{align}
&G_j^*(x)=8\pi (1+\alpha_j)R(x,p_j)+8\pi \sum_{l\neq j}^{1,\cdots,m}(1+\alpha_l)G(x,p_l),  \label{G_j*}
\\
&L(\mathbf{p})=\sum_{j=1}^t \big[\Delta \log h(p_j)+\rho_*-N^*-2K(p_j)\big] (h_j(p_j))^{\frac{1}{1+\alpha_1}}e^{\frac{G_j^*(p_j)}{1+\alpha_1}},   \label{L}
\\
&D(\mathbf{p})=
\begin{pmatrix}
\nabla(\log h_1+G_1^*)(p_1) \\
\cdots \\
\nabla(\log h_t+G_t^*)(p_t)
\end{pmatrix}
,  \label{D}
\end{align}
where $h_j$ is defined in (\ref{H2}), and
\begin{equation*}
  \rho_*=8\pi\sum_{j=1}^m(1+\alpha_j),\quad N^*=4\pi\sum_{j=1}^N\alpha_j
\end{equation*}

Our first result is when all blowup points are singular sources:
\begin{thm}\label{main-theorem}
	
	Let $v_k^{(1)}$ and $v_k^{(2)}$ be two sequences of bubbling solutions of (\ref{m-equ}) with $\rho_k^{(1)}=\rho_k=\rho_k^{(1)}$ and $\alpha_j\in\mathbb{R}^+\setminus\mathbb{N}\,(1\leq j\leq m)$. If $L(\mathbf{p})\neq0$ and $D(\mathbf{p})=0$, then $v_k^{(1)}=v_k^{(2)}$ for $k$ large enough.
	
\end{thm}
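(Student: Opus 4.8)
I would argue by contradiction: suppose that along a subsequence $v_k^{(1)}\not\equiv v_k^{(2)}$, and let $w_k^{(i)}$ be the solution of (\ref{r-equ}) attached to $v_k^{(i)}$ through (\ref{r-sol}). Since the Green's function corrections in (\ref{r-sol}) are identical for both, $w_k^{(1)}-w_k^{(2)}=v_k^{(1)}-v_k^{(2)}$, and I normalize
\[
\eta_k=\frac{w_k^{(1)}-w_k^{(2)}}{\|w_k^{(1)}-w_k^{(2)}\|_{L^\infty(M)}},\qquad\text{so that }\ \|\eta_k\|_{L^\infty(M)}=1 .
\]
Subtracting the two equations and linearizing the nonlinearity, $\eta_k$ solves $\Delta_g\eta_k+V_k\eta_k=\bar c_kV_k+\mathrm{const}$, where $V_k=\rho_k He^{w_k^{(2)}}/\int_M He^{w_k^{(2)}}\,\mathrm{d}\mu$ concentrates at the blow-up set $\mathbf p=(p_1,\dots,p_m)$ with total mass $\to\rho_*$, the scalar $\bar c_k=o(1)$ records the difference of the two normalizing integrals, and the additive constant is pinned by $\int_M\eta_k=0$ (both $v_k^{(i)}$ have zero average). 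The goal is to show $\eta_k\to0$ in $L^\infty(M)$, contradicting the normalization.

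\textbf{Outer and inner profiles.} Away from $\mathbf p$, elliptic estimates give $\eta_k\to\eta$ in $C^2_{loc}(M\setminus\{p_1,\dots,p_m\})$, with $V_k\eta_k\rightharpoonup\sum_j b_j\delta_{p_j}$, so $\Delta_g\eta=\sum_j b_j(\delta_{p_j}-1)$ and $\eta=\sum_{j=1}^m b_jG(\cdot,p_j)$ (using $\int_M\eta=0$). Such a function is unbounded near any $p_j$ with $b_j\neq0$, but $\|\eta_k\|_{L^\infty(M)}=1$ forces $\eta$ to stay bounded on $M\setminus\{p_j\}$; hence all $b_j=0$, $\eta\equiv0$, and in particular $\|\eta_k\|_{L^\infty(M)}$ is attained, for large $k$, in shrinking neighborhoods of the $p_j$'s. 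Near each $p_j$ I would rescale at the bubbling scale $\lambda_k^{(i)}$ of $w_k^{(i)}$ (the two scales being comparable by the sharp asymptotics, which say the rescaled $w_k^{(i)}$ converge to the radial entire solution $U_{\alpha_j}$ of $\Delta U+|y|^{2\alpha_j}e^U=0$ with mass $8\pi(1+\alpha_j)$); the rescaled $\eta_k$ then converge to a \emph{bounded} solution of the linearized equation $\Delta\phi+|y|^{2\alpha_j}e^{U_{\alpha_j}}\phi=0$ on $\mathbb R^2$. Here $\alpha_j\in\mathbb R^+\setminus\mathbb N$ is decisive: the space of bounded solutions is then one-dimensional, spanned by the dilation mode $\Phi_{\alpha_j}(y)=y\cdot\nabla U_{\alpha_j}(y)+2(1+\alpha_j)$, so the inner limit at $p_j$ is $d_j\Phi_{\alpha_j}$; and since the sup is attained in the bubbling region, $d_{j_0}\neq0$ for some $j_0$. (If some $\alpha_j\in\mathbb N$, two additional translation modes survive and the argument would not close --- which is why the theorem assumes non-quantization.)

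\textbf{Pohozaev identities and the hypotheses on $\mathbf p$.} I would finish with Pohozaev-type identities of Lin-Yan type for the pair $(w_k^{(1)},w_k^{(2)})$ on the disks $B_r(p_j)$, using the singular weight $H(x)=h_j(x)|x-p_j|^{2\alpha_j}$. Contracting the two equations with first-order operators applied to $\eta_k$ and integrating, the boundary integrals over $\partial B_r(p_j)$ are computed from the sharp asymptotics and from $\eta\equiv0$, while the interior integrals produce $\nabla(\log h_j+G_j^*)(p_j)$ at first order and $\Delta\log h(p_j)+\rho_*-N^*-2K(p_j)$, weighted by $(h_j(p_j))^{1/(1+\alpha_1)}e^{G_j^*(p_j)/(1+\alpha_1)}$, at the next. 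Summed over the dominant sources $j=1,\dots,t$, the first-order part is exactly $D(\mathbf p)$ tested against the (normalized) location perturbations and hence vanishes by the hypothesis $D(\mathbf p)=0$ --- which is in any case the necessary critical-point condition for blow-up to occur at $\mathbf p$. The next-order identity then reduces, at leading order, to a relation of the form $(\text{nonzero constant})\cdot L(\mathbf p)\cdot\delta_k=\rho_k^{(1)}-\rho_k^{(2)}+o(|\delta_k|)=o(|\delta_k|)$, where $\delta_k$ is the normalized difference of the common bubbling-scale parameters of the two solutions and each $d_j$ is a fixed nonzero multiple of $\delta_k$; since $L(\mathbf p)\neq0$, this forces $\delta_k\to0$ faster than the $L^\infty$-normalization permits, so all $d_j\to0$, the inner limits vanish, and an a priori estimate $\|\eta_k\|_{L^\infty(M)}\le C\sum_j|d_j|+o(1)$ --- proved by barrier/ODE comparison mode by mode in the intermediate annuli, again using $\alpha_j\notin\mathbb N$ to exclude slowly decaying components --- yields $\|\eta_k\|_{L^\infty(M)}\to0$, the desired contradiction.

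\textbf{Main obstacle.} The qualitative steps (local convergence, classification of the linear kernel) are routine; the hard part is quantitative and stems from juggling three scales at once: (i) establishing second-order asymptotic expansions of $w_k^{(i)}$ near each $p_j$, with explicit error terms involving $h_j$, $K$ and $G_j^*$, \emph{in the presence of} the singular weight $|x-p_j|^{2\alpha_j}$ with non-integer $\alpha_j$ (so the model is $U_{\alpha_j}$, not the Liouville bubble); (ii) controlling $\eta_k$ uniformly in the intermediate annuli between the bubbling scale and a fixed radius, with a bound linear in the inner coefficients $d_j$; and (iii) the bookkeeping that shows the relevant clusters of Pohozaev terms coalesce exactly into $\nabla(\log h_j+G_j^*)(p_j)$ and into $L(\mathbf p)$, so that precisely the two hypotheses $D(\mathbf p)=0$ and $L(\mathbf p)\neq0$ are what annihilate the first-order obstruction and invert the leading-order scale relation, forcing $\eta_k\to0$.
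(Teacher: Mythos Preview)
Your broad strategy---argue by contradiction, normalize the difference to unit $L^\infty$ norm, classify the inner and outer limits using the one-dimensionality of the linearized kernel at non-integer $\alpha_j$, and close via Pohozaev identities---is exactly the paper's. But the execution has two substantive gaps and one factual slip.

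\textbf{The missing preliminary estimate.} The paper works with $u_k=w_k-\log\int_M He^{w_k}$ rather than $w_k$; the normalized difference $\varsigma_k=(u_k^{(1)}-u_k^{(2)})/\|u_k^{(1)}-u_k^{(2)}\|_{L^\infty}$ then satisfies the clean equation $\Delta_g\varsigma_k+\rho_kHc_k\varsigma_k=0$ with no right-hand side. The first real step (the paper's Lemma~\ref{est1}) is an \emph{a priori} bound
\[
\|u_k^{(1)}-u_k^{(2)}\|_{L^\infty(M)}=O\bigl(e^{-\gamma\lambda_{k,1}/(1+\alpha_1)}\bigr),
\]
obtained by equating the sharp expansions $\rho_k-\rho_*=L^*e^{-\lambda_{k,1}^{(i)}/(1+\alpha_1)}+\text{h.o.t.}$ for $i=1,2$ and using $L(\mathbf p)\neq0$. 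This is the \emph{first} place $L(\mathbf p)\neq0$ enters, and you do not mention it; without it the coefficient $c_k$ is not close enough to $e^{u_k^{(1)}}$ and the error terms in both the annulus matching and the Pohozaev computation fail to close (the $\log$-factor from integrating across the intermediate annulus is not beaten).

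\textbf{The matching step.} You assert that ``each $d_j$ is a fixed nonzero multiple of $\delta_k$'' without explanation. In the paper this is Lemma~\ref{lem-limit-2}: the outer limit of $\varsigma_k$ is a constant $-b_0$, and an ODE estimate on the spherical average of $\varsigma_k$ across the intermediate annulus (which needs the smallness above) shows that every inner coefficient $b_{j,0}$ equals $b_0$. Only after this can one sum the Pohozaev identities over $j$, use $\sum_j A_{k,j}=\int_M f_k=0$, and extract a relation $L(\mathbf p)\cdot b_0\,e^{-\lambda_{k,1}/(1+\alpha_1)}=o(e^{-\lambda_{k,1}/(1+\alpha_1)})$, whence $b_0=0$: this is the \emph{second} use of $L(\mathbf p)\neq0$. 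Your Pohozaev paragraph collapses these two distinct uses into one and never explains why the $d_j$ are tied together.

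\textbf{A factual slip.} You say $D(\mathbf p)=0$ ``is in any case the necessary critical-point condition for blow-up to occur at $\mathbf p$''. That is true only at regular blowup points. In this theorem all blowup points are singular sources, so their locations are fixed a priori and no balance condition holds automatically; $D(\mathbf p)=0$ is a genuine hypothesis, used in the Pohozaev computation to kill the first-order term $\nabla(\log h_j+G_{k,j})(p_j)$ for $1\le j\le t$.
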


Note that we use $\mathbb N$ to denote the set of positive integers. The assumption that $\alpha_j\in \mathbb{R}^+\setminus\mathbb{N}$ implies that all blowup points are singular sources. It is also very essential to require $\alpha_j$ to be non-integer, since quantized singular sources ( if the strength is $4\pi N$) exhibit non-simple blowup phenomenon \cite{kuo-lin}\cite{wei-zhang-19}
that has to be studied in a separate work in the future.

The assumption of $D({\bf p})$ is also very interesting. It is well known that if $p$ is not a singular source, the vanishing rate of $D({\bf p})$ is very fast for a regular blowup point ( \cite{gluck},\cite{chen-lin-sharp}).
\medskip

Our second main result is about the uniqueness of bubbling solutions when some blowup points are non-quantized singular sources and some are regular points. So in this case we require $1\le \tau<m$
and for $(x_{\tau+1},\cdots,x_m)\in M\times\cdots \times M$, we define
\begin{equation}\label{f*}
  f^*(x_{\tau+1},\cdots,x_m)=\sum_{j=\tau+1}^m\big[\log h(x_j)+4\pi R(x_j,x_j)\big]+4\pi \sum_{l\neq j}^{\tau +1,\cdots,m}G(x_l,x_j).
\end{equation}
It is well known that $(p_{\tau+1},\cdots,p_m)$ is a critical point of $f^*$.

\begin{thm}\label{main-theorem-2}

	Let $v_k^{(1)}$ and $v_k^{(2)}$ be two sequences of bubbling solutions of (\ref{m-equ})  with $\rho_k^{(1)}=\rho_k=\rho_k^{(1)}$ and $0\leq\alpha_j<1(1\leq j\leq m)$. Suppose $1\le \tau<m$, $L(\mathbf{p})\neq0$, $D(\mathbf{p})=0$ and $\det \big(D^2f^*(p_{\tau+1},\cdots,p_m)\big)\neq 0$, then $v_k^{(1)}=v_k^{(2)}$ for $k$ large enough.
	
\end{thm}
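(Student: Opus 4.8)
The plan is to argue by contradiction in the same spirit as the proof of Theorem~\ref{main-theorem}, but carefully keeping track of the extra blowup points $p_{\tau+1},\dots,p_m$ that are regular points of the domain. Suppose $v_k^{(1)}\not\equiv v_k^{(2)}$ along a subsequence. Writing $w_k^{(i)}$ for the regularized solutions given by (\ref{r-sol}) and setting
\[
\xi_k=\frac{w_k^{(1)}-w_k^{(2)}}{\|w_k^{(1)}-w_k^{(2)}\|_{L^\infty(M)}},
\]
we first establish the standard bubbling description near each $p_j$: near a singular source $p_j$ ($1\le j\le\tau$) the local mass is $8\pi(1+\alpha_j)$ and the rescaled bubble converges to the (possibly singular) entire solution of $\Delta U+|y|^{2\alpha_j}e^{U}=0$, while near a regular blowup point $p_j$ ($\tau+1\le j\le m$) the local mass is $8\pi$ and the profile is the standard bubble. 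One then has a global Pohozaev-type identity and the expansion of $\rho_k$ in terms of the error $w_k^{(i)}-w^{(i)}_{*}$, exactly as in the proof of the first theorem; the novelty is only that the sum in (\ref{G_j*}), (\ref{L}) now includes the regular points as well.

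Next I would analyze the limit $\xi_0$ of $\xi_k$. Away from $\{p_1,\dots,p_m\}$, $\xi_k$ satisfies a linear equation whose limit is $\Delta_g\xi_0+(\text{bounded coefficients})\,\xi_0=c$ for a constant $c$; by the removable-singularity analysis $\xi_0$ extends across the $p_j$ and one shows $\xi_0$ is a constant, which we may normalize so that the nonconstant part carries the full $L^\infty$ norm. The contradiction then comes from two families of identities. The $t$ ``strong'' singular sources with $\alpha_1=\cdots=\alpha_t$ contribute the hypotheses $L(\mathbf p)\ne 0$ and $D(\mathbf p)=0$ precisely as in Theorem~\ref{main-theorem}: $D(\mathbf p)=0$ forces the leading-order location equation to hold, and $L(\mathbf p)\ne 0$ extracts from the next-order Pohozaev expansion that $\rho_k^{(1)}=\rho_k^{(2)}$ forces the coefficient of the logarithmic term to vanish, i.e. the difference of the two bubble ``heights'' is controlled and ultimately $\xi_0\equiv 0$ on that part. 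The new ingredient is that at each regular blowup point $p_j$ with $\tau+1\le j\le m$, the location equation is governed by $\nabla f^*$, and the fact that $(p_{\tau+1},\dots,p_m)$ is a \emph{nondegenerate} critical point of $f^*$ (that is, $\det D^2 f^*\ne0$) lets us solve for the perturbation of the blowup locations of $v_k^{(1)}$ relative to $v_k^{(2)}$ and conclude they coincide to the required order; this is the analogue of the argument in \cite{bart-4} for purely regular blowup.

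Combining the estimates from the singular block (using $L(\mathbf p)\ne0$, $D(\mathbf p)=0$) and the regular block (using $\det D^2f^*\ne0$) forces $\xi_k\to 0$ uniformly, contradicting $\|\xi_k\|_{L^\infty}=1$; hence $v_k^{(1)}=v_k^{(2)}$ for large $k$. The main obstacle, I expect, is the matching of the two blocks: one must show that the interaction between a non-quantized singular blowup point and a nearby (or merely coexisting) regular blowup point does not produce cross terms that spoil either the $L(\mathbf p)$ expansion or the $f^*$ Hessian argument. Concretely, this requires a sufficiently sharp pointwise estimate on $w_k^{(i)}$ in the transition regions and a careful bookkeeping of all $8\pi(1+\alpha_l)$ Green's-function contributions in $G_j^*$, so that the error terms in both the Pohozaev identity at the singular points and the gradient identity at the regular points are genuinely of lower order. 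Once these sharp estimates are in place (they are the technical heart, adapted from the blow-up analysis already developed for (\ref{r-equ})), the two uniqueness mechanisms decouple and the contradiction follows.
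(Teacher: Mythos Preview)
Your overall strategy---contradiction, normalize the difference, study its rescaled limits near each $p_j$, show all limits vanish---is the paper's. But your account of the regular blowup points $p_{\tau+1},\dots,p_m$ misses the actual mechanism, and as written the argument would not close.

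The issue is that the rescaled limit $\varsigma_{k,j}$ at a \emph{regular} $p_j$ lands in the \emph{three}-dimensional kernel of the linearized operator (Lemma~\ref{linear-lem-2}), so a priori
\[
\varsigma_{k,j}\longrightarrow b_{j,0}\varphi_{j,0}+b_{j,1}\varphi_{j,1}+b_{j,2}\varphi_{j,2},
\]
with $\varphi_{j,1},\varphi_{j,2}$ the translation modes. The scalar Pohozaev identity (\ref{PI-1}) together with $L(\mathbf p)\neq0$ kills $b_{j,0}=b_0$ for \emph{all} $j$ at once (Proposition~\ref{prop-b0}), exactly as in Theorem~\ref{main-theorem}. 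What remains is to kill $b_{j,1},b_{j,2}$ for $\tau+1\le j\le m$, and this is \emph{not} done by ``solving for the perturbation of the blowup locations.'' It requires a second, \emph{translational} Pohozaev identity (multiply by $\partial_i$ instead of $x\cdot\nabla$; this is (\ref{PI-2})). Evaluating both sides using the refined $C^1$ estimate of $\varsigma_k$ away from blowup (Lemma~\ref{lem-C1-est-re}) and passing to the limit yields the linear system
\[
D^2f^*(p_{\tau+1},\dots,p_m)\cdot\vec b=0,
\]
where $\vec b$ collects the $b_{j,h}B_j$. Only then does $\det D^2f^*\neq0$ force $b_{j,1}=b_{j,2}=0$ (Proposition~\ref{prop-b1b2}). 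Without this second identity and the resulting linear system, the translation modes survive and the contradiction does not follow.

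A minor correction: work with $u_k=w_k-\log\int He^{w_k}$ rather than $w_k$. The equation for $\varsigma_k=(u_k^{(1)}-u_k^{(2)})/\|u_k^{(1)}-u_k^{(2)}\|_{L^\infty}$ is the homogeneous $\Delta_g\varsigma_k+\rho_kHc_k\varsigma_k=0$, and since $c_k\to0$ off the blowup set the limit is simply $\Delta_g\varsigma_0=0$, hence $\varsigma_0\equiv -b_0$; there is no inhomogeneous constant on the right.
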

The notation $D^2f^*$ in Theorem \ref{main-theorem-2} stands for the Hessian tensor field on $M$. Theorems \ref{main-theorem} and Theorem \ref{main-theorem-2} are clearly extensions of the main theorem in \cite{bart-4}, where the uniqueness of bubbling solutions around regular blowup points is established. Here in our work, the assumptions of $L(\mathbf{p})$ and $D(\mathbf{p})$
are only placed on singular sources with the strongest strength.

In addition to the importance of application, the proof of the main theorems requires extremely delicate local analysis, just like the argument in \cite{bart-4}. Our argument relies heavily on the result of the second author in \cite{zhang2}, Chen-Lin's refined estimates in \cite{chen-lin,chen-lin-deg-2} and the argument used by Lin-Yan \cite{lin-yan-uniq} and Bartolucci-et-al \cite{bart-4}. Even though the outline of our paper is similar to those used in \cite{lin-yan-uniq,bart-4} we have to establish accurate estimates for certain terms in an iterative manner.

The proof of Theorem \ref{main-theorem} and Theorem \ref{main-theorem-2} can also be applied to solve the following locally defined Dirichlet boundary problem:
Let $\Omega$ be an open and bounded domain in $\mathbb{R}^2$ with regular boundary $\partial\Omega\in C^2$, $v$ be a solution of
\begin{equation}\label{equ-flat}
\left\{\begin{array}{lll}
\Delta v+\rho \frac{he^v}{\int_{\Omega} h e^v{\rm d}x}=\sum_{j=1}^N 4\pi \alpha_j \delta_{q_j}  &{\rm in} \;\ \Omega,
\\
v=0  &{\rm on} \;\ \partial\Omega,
\end{array}
\right.
\end{equation}
where $h>0$ is a $C^1$ funation in $\Omega$, $q_1,\cdots,q_N$ are distinct points in $\Omega$, $\rho>0$, $\alpha_j>0$ are constants.

Let $\{v_k\}$ be a sequence of solutions to (\ref{equ-flat}) with $\rho=\rho_k$. We say
\begin{equation}\label{blowup-flat}
v_k \  {\rm blows} \  {\rm up} \  {\rm at} \  p_j\in\Omega,\quad 1\leq j\leq m,
\end{equation}
if $\rho \frac{he^v}{\int_{\Omega} h e^v{\rm d}x}\rightharpoonup8\pi\sum_{j=1}^N(1+\alpha_j)\delta_{p_j}$ in $\Omega$ in the sense of measure, where $\alpha_j=0$ if $p_j\notin\{q_1\cdots,q_N\}$. Similar to notations for the first part, we assume there exist $1\leq t\leq\tau\leq m$ such that $\alpha_1=\cdots\alpha_t>\alpha_i$, $i\ge t+1$ and $\alpha_{\tau+1}=\cdots\alpha_m$.

Let $G_{\Omega}$ be the Green's function defined by
\begin{equation*}
\left\{\begin{array}{lll}
-\Delta G_{\Omega}(x,p)=\delta_{p}  &{\rm in} \;\ \Omega,
\\
G_{\Omega}(x,p)=0  &{\rm on} \;\ \partial\Omega,
\end{array}
\right.
\end{equation*}
and $R_{\Omega}(x,p)=G_{\Omega}(x,p)+\frac{1}{2\pi}\log |x-p|$ be the regular part of $G_{\Omega}(x,p)$. In order to state the uniqueness results of (\ref{equ-flat}) we denote $N^*=4\pi\sum_{j=1}^m\alpha_j$ and
\begin{align*}
&G_{j,\Omega}^*(x)=8\pi (1+\alpha_j)R_{\Omega}(x,p_j)+8\pi \sum_{l\neq j}^{1,\cdots,m}(1+\alpha_l)G_{\Omega}(x,p_l),
\\
&L_{\Omega}(\mathbf{p})=\sum_{j=1}^t \big[\Delta \log h(p_j)-N^*\big] (h_j(p_j))^{\frac{1}{1+\alpha_1}}e^{\frac{G_j^*(p_j)}{1+\alpha_1}},
\\
&D_{\Omega}(\mathbf{p})=
\begin{pmatrix}
\nabla(\log h_1+G_1^*)(p_1) \\
\cdots \\
\nabla(\log h_t+G_t^*)(p_t)
\end{pmatrix}
.
\end{align*}

Then we have the following result similar to Theorem \ref{main-theorem}.

\begin{thm}\label{main-theorem-3}
	
	Let $v_k^{(1)}$ and $v_k^{(2)}$ be two sequences of solutions of (\ref{equ-flat}) (\ref{blowup-flat}) with $\rho_k^{(1)}=\rho_k=\rho_k^{(1)}$ and $\alpha_j\in\mathbb{R}^+\setminus\mathbb{N}\,(1\leq j\leq m)$. If $L_{\Omega}(\mathbf{p})\neq0$ and $D_{\Omega}(\mathbf{p})=0$, then $v_k^{(1)}=v_k^{(2)}$ for $k$ large enough.
	
\end{thm}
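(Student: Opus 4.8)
The plan is to reduce Theorem \ref{main-theorem-3} to the argument that proves Theorem \ref{main-theorem}, observing that the locally defined Dirichlet problem (\ref{equ-flat}) on $\Omega\subset\mathbb R^2$ behaves, after the standard substitution, exactly like the equation on a compact surface with trivial geometry. First I would perform the analogue of the reduction (\ref{r-sol})--(\ref{H2}): set $w_k(x)=v_k(x)+4\pi\sum_{j=1}^N\alpha_j G_\Omega(x,q_j)$ so that $w_k$ solves $\Delta w_k+\rho_k(He^{w_k}/\int_\Omega He^{w_k}-\text{(no background term)})=0$ with homogeneous Dirichlet data, where $H(x)=h(x)\prod_j e^{-4\pi\alpha_j G_\Omega(x,q_j)}$ has the local form $H(x)=h_j(x)|x-q_j|^{2\alpha_j}$ near each $q_j$. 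On a bounded domain there is no volume/background constant to subtract, which is why $L_\Omega$ drops the terms $\rho_*$ and the curvature term $2K(p_j)$ present in (\ref{L}): the flat metric has $K\equiv 0$, and the absence of the $-1$ on the right-hand side of (\ref{equ-flat}) removes the $\rho_*$ contribution. Everything else in the definitions of $G^*_{j,\Omega}$, $L_\Omega(\mathbf p)$, $D_\Omega(\mathbf p)$ is the verbatim flat-domain transcription of (\ref{G_j*})--(\ref{D}).

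Next I would carry over the local blow-up analysis. By the hypothesis (\ref{blowup-flat}) the blow-up set is $\{p_1,\dots,p_m\}$, all of which are singular sources (since $\alpha_j\in\mathbb R^+\setminus\mathbb N$ for every $j$, exactly as in Theorem \ref{main-theorem}), and around each $p_j$ one has the Chen--Lin type refined estimates \cite{chen-lin,chen-lin-deg-2} and the second author's asymptotic expansions \cite{zhang2}, which are purely local and insensitive to whether the ambient space is a closed surface or a bounded planar domain with smooth boundary; near $\partial\Omega$ no blow-up occurs, so standard elliptic estimates give uniform control there. Then, following the scheme of Lin--Yan \cite{lin-yan-uniq} and Bartolucci--et--al \cite{bart-4}, I would suppose $v_k^{(1)}\neq v_k^{(2)}$ for a subsequence, form the normalized difference $\xi_k=(v_k^{(1)}-v_k^{(2)})/\|v_k^{(1)}-v_k^{(2)}\|_{L^\infty}$, and derive the limiting linearized equation. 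The two obstructions to $\xi_k\to 0$ are killed respectively by the two hypotheses: the leading (Pohozaev-type) obstruction produces a scalar quantity proportional to $L_\Omega(\mathbf p)$, which is nonzero by assumption, and the first-order (translation) obstruction produces the vector $D_\Omega(\mathbf p)$, which vanishes by assumption — and when $D_\Omega(\mathbf p)=0$ the vanishing rate of the relevant inner products is fast enough (the flat-domain analogue of the remark after Theorem \ref{main-theorem}, using \cite{gluck,chen-lin-sharp}) that the iteration closes.

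Concretely, the key steps in order are: (i) the Green's-function reduction to (\ref{r-equ})-type form on $\Omega$ with the matching definitions of $H$, $h_j$; (ii) import the local bubbling profile and the pointwise expansion of $w_k^{(i)}$ near each $p_j$ from \cite{zhang2,chen-lin,chen-lin-deg-2}, together with boundary estimates away from the blow-up set; (iii) run the Lin--Yan / Bartolucci contradiction argument on the normalized difference $\xi_k$, extracting the limiting linearized problem and its kernel; (iv) apply two Pohozaev-type identities on small disks $B_r(p_j)$ — one giving the obstruction $\propto L_\Omega(\mathbf p)\neq 0$, the other the obstruction $\propto D_\Omega(\mathbf p)=0$ — and use $D_\Omega(\mathbf p)=0$ to gain the extra order needed to iterate and contradict $\|\xi_k\|_{L^\infty}=1$. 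Because all blow-up points here are singular (no regular points), there is no need for the Hessian condition on $f^*$ that appears in Theorem \ref{main-theorem-2}; this is the exact parallel of Theorem \ref{main-theorem}.

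I expect the main obstacle to be step (iv): producing the Pohozaev identities with precisely the right error control. As in \cite{bart-4}, the difficulty is that the naive estimates are not sharp enough, so one has to expand $w_k^{(i)}$ and $\xi_k$ to one more order than seems necessary and establish the cancellations in an iterative manner — exactly the ``accurate estimates for certain terms in an iterative manner'' alluded to above. The presence of the non-quantized singular weights $|x-q_j|^{2\alpha_j}$ with $\alpha_j\notin\mathbb N$ is what makes the blow-up ``simple'' and keeps these expansions tractable; the boundary of $\Omega$ plays no role in the delicate part since it stays away from the concentration. Once the two obstruction identities are in place with sharp remainders, the contradiction is obtained just as in Theorem \ref{main-theorem}, completing the proof.
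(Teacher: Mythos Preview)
Your proposal is essentially correct and follows the same route as the paper: both treat Theorem~\ref{main-theorem-3} as a transcription of the proof of Theorem~\ref{main-theorem} to the flat Dirichlet setting, noting that the local bubbling estimates from \cite{chen-lin,zhang2} are already stated for domains, that $K\equiv 0$ and $\phi_j\equiv 0$ account for the simplified form of $L_\Omega$, and that the boundary contributes nothing because blow-up stays in the interior. The paper explicitly invokes \cite{ma-wei} (moving planes plus Pohozaev) for this last point, which you assert without justification; you should cite it.

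One conceptual inaccuracy is worth flagging. You describe $D_\Omega(\mathbf p)=0$ as a ``first-order (translation) obstruction'' coming from a second Pohozaev identity. In the all-singular case ($\alpha_j\in\mathbb R^+\setminus\mathbb N$ for every $j$) the linearized kernel at each bubble is \emph{one}-dimensional (Lemma~\ref{linear-lem-1}), so there is no translation mode and hence no translation obstruction to kill. The paper uses only the single Pohozaev identity of Lemma~\ref{Pohozaev identity-1}; the hypothesis $D_\Omega(\mathbf p)=0$ enters not as an obstruction but as a smallness assumption that forces $\nabla(\log h_j+\phi_{k,j})(p_{k,j})=O(e^{-\lambda_{k,j}/(1+\alpha_1)})$, which is what makes the first-order piece of $K_3$ in Lemma~\ref{lem-PI1-right} negligible. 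The second Pohozaev identity (Lemma~\ref{Pohozave identity-2}) appears only in Theorem~\ref{main-theorem-2}/\ref{main-theorem-4}, where regular blow-up points are present. Your step~(iv) should therefore be rewritten with a single Pohozaev identity and the role of $D_\Omega(\mathbf p)=0$ repositioned accordingly; once that is done your outline matches the paper's argument.
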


If the set of blowup points is a mixture of non-quantized singular sources and regular points, we also have a uniqueness result. Let
\begin{equation*}
f_{\Omega}^*(x_{\tau+1},\cdots,x_m)=\sum_{j=\tau+1}^m\big[\log h(x_j)+4\pi R(x_j,x_j)\big]+4\pi \sum_{l\neq j}^{\tau +1,\cdots,m}G(x_l,x_j),
\end{equation*}
and $D^2f_{\Omega}^*$ be the Hessian tensor field on $M$. In this case, $(p_{\tau+1},\cdots,p_m)$ is a critical point of $f_{\Omega}^*$. Then, we obtain the following result.

\begin{thm}\label{main-theorem-4}

	Let $v_k^{(1)}$ and $v_k^{(2)}$ be two sequences of solutions of (\ref{equ-flat}) (\ref{blowup-flat}) with $\rho_k^{(1)}=\rho_k=\rho_k^{(1)}$ and $0\leq\alpha_j<1(1\leq j\leq m )$. If $L_{\Omega}(\mathbf{p})\neq0$, $D_{\Omega}(\mathbf{p})=0$ and $\det \big(D^2f_{\Omega}^*(p_{\tau+1},\cdots,p_m)\big)\neq 0$, then $v_k^{(1)}=v_k^{(2)}$ for $k$ large enough.
	
\end{thm}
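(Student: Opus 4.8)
The plan is to deduce Theorem~\ref{main-theorem-4} from the same scheme used for Theorem~\ref{main-theorem-2}, exploiting the fact that the only structural changes are the flat background metric (so the Gaussian curvature $K$ and the constant $\rho_*$ disappear from the local expansions, as already reflected in the definition of $L_\Omega(\mathbf p)$) and the presence of $\partial\Omega$, which is harmless because all the blowup points $p_1,\dots,p_m$ lie in the interior of $\Omega$. First I would pass to a regular equation: setting $w_k(x)=v_k(x)+4\pi\sum_{j=1}^N\alpha_j G_\Omega(x,q_j)$ turns (\ref{equ-flat}) into $\Delta w_k+\rho_k H e^{w_k}\big/\!\int_\Omega He^{w_k}=0$ in $\Omega$ with $w_k=0$ on $\partial\Omega$, where $H(x)=h(x)\prod_{j=1}^N e^{-4\pi\alpha_j G_\Omega(x,q_j)}$ satisfies $H(x)=h_j(x)|x-q_j|^{2\alpha_j}$ near $q_j$. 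Since $0\le\alpha_j<1$, near each $p_j$ with $1\le j\le\tau$ the equation carries a non-quantized conic singularity, while near each $p_j$ with $j>\tau$ it is the regular mean field equation; this is exactly the local model analyzed in \cite{zhang2,chen-lin,chen-lin-deg-2}.

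Then I would argue by contradiction as in \cite{lin-yan-uniq,bart-4}. Suppose $v_k^{(1)}\neq v_k^{(2)}$ along a subsequence, let $w_k^{(1)},w_k^{(2)}$ be the corresponding regularized solutions, and normalize $\xi_k=(w_k^{(1)}-w_k^{(2)})\big/\|w_k^{(1)}-w_k^{(2)}\|_{L^\infty(\Omega)}$, so that $\|\xi_k\|_\infty=1$ and $\xi_k$ satisfies a linear elliptic equation with a nonlocal lower-order term whose coefficient concentrates at the blowup points. A blow-up analysis near each $p_j$, rescaled by the local maximum of $w_k^{(i)}$, shows that: at a strongest singular source $p_j$ ($1\le j\le t$) the limit of $\xi_k$ is a bounded solution of the linearized singular Liouville equation, which for $\alpha_1\notin\mathbb N$ reduces to the one-parameter family generated by the dilation mode; at the remaining singular sources and at the regular blowup points the limit lies in the finite-dimensional kernel of the corresponding linearized Liouville operator. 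Away from $\{p_1,\dots,p_m\}$, $\xi_k\to\sum_j b_j G_\Omega(\cdot,p_j)$ for some constants $b_j$.

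The core of the proof is to turn the matching of these inner and outer limits into a homogeneous linear system for the $b_j$'s and the rescaling parameters. This is done through Pohozaev-type identities on small circles around each $p_j$, together with the refined asymptotic expansions of $w_k^{(i)}$ carried to the order dictated by the non-integer exponent $2(1+\alpha_1)$ — it is precisely here that the iterative sharpening of the error terms (in the spirit of \cite{zhang2,chen-lin}) is indispensable. The zeroth-order Pohozaev identity at the strongest singular sources contributes the scalar $L_\Omega(\mathbf p)$; the first-order (vector) Pohozaev identities there contribute $D_\Omega(\mathbf p)$; and the matching at the regular blowup points contributes the Hessian $D^2 f_\Omega^*(p_{\tau+1},\dots,p_m)$, exactly as in \cite{bart-4}. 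Under the hypotheses $L_\Omega(\mathbf p)\neq0$, $D_\Omega(\mathbf p)=0$ and $\det D^2 f_\Omega^*(p_{\tau+1},\dots,p_m)\neq0$, the only solution of this system is the trivial one, which forces $\xi_k\to0$ uniformly and contradicts $\|\xi_k\|_\infty=1$; hence $v_k^{(1)}=v_k^{(2)}$ for $k$ large.

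I expect the principal difficulty to be identical to that in the compact case: producing the local expansions of $w_k^{(i)}$ near the non-quantized singular sources $p_1,\dots,p_t$ with enough precision. Because $\alpha_1$ is not an integer, these expansions involve non-integer powers that interact with the logarithmic terms and must be bootstrapped through several steps; a crude expansion makes the relevant Pohozaev identities inconclusive. By contrast, the boundary $\partial\Omega$ enters only through the smooth regular part $R_\Omega$ and contributes lower-order corrections, so it introduces no genuinely new obstacle.
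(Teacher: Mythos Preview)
Your overall scheme---reduce to the argument for Theorem~\ref{main-theorem-2}, noting that on $\Omega$ the conformal factor is trivial, $K\equiv0$, and the boundary contributes only through the smooth regular part $R_\Omega$ since blowup points stay in the interior (Ma--Wei \cite{ma-wei})---is exactly what the paper does, and the identification of the main difficulty (the iterative sharpening of the expansions near the non-quantized singular sources) is correct.

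Two points in your outline misrepresent the mechanism and would cause trouble if you tried to fill in the details. First, the outer limit of the normalized difference is not $\sum_j b_j G_\Omega(\cdot,p_j)$: away from the blowup set $\xi_k$ converges to a bounded harmonic function, hence a constant (in the paper's compact setting this is Lemma~\ref{lem-limit-2}), and the finer Green-function expansion $\varsigma_k-\bar\varsigma_k=\sum_j A_{k,j}G(p_{k,j}^{(1)},x)+\cdots$ is a higher-order representation of $\xi_k$ used to feed into the Pohozaev identities, not its limit. Second, the hypothesis $D_\Omega(\mathbf p)=0$ does not arise as the coefficient in a ``first-order vector Pohozaev identity'' at the singular sources that you then invert; rather, it is used \emph{as an input} inside the zeroth-order (scalar) Pohozaev computation to make $\nabla(\log h_j+\phi_{k,j})(p_{k,j}^{(1)})=O(e^{-\lambda_{k,j}^{(1)}/(1+\alpha_1)})$ for $1\le j\le t$, so that the first-order piece of $K_3$ in Lemma~\ref{lem-PI1-right} is negligible and the term carrying $L_\Omega(\mathbf p)$ becomes dominant. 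The only ``vector'' Pohozaev identity in the argument is (\ref{PI-2}), applied at the \emph{regular} blowup points $\tau+1\le j\le m$, and it produces the Hessian $D^2 f_\Omega^*$; no analogous identity is run at the singular sources.
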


When we were in the final stage of writing this article, we found that Bartolucci, et, al \cite{bart-4-2} posted an article on arxiv.org about the same topic. Their theorem is a special case of our results and both works were carried out independently.

\smallskip

The organization of this paper is as follows. Section \ref{preliminary} is dedicated to notations and preliminary sharp estimates for bubbling solutions of equation (\ref{m-equ}).  In section \ref{difference} we consider the differences between two bubbling sequences and establish many estimates near each blowup point and away from all blowup points. In section \ref{anal-pohozaev} we derive some Pohozaev-type identities and evaluate each term carefully. These Pohozaev identities play a key role in the proof of the main theorems. Finally the proof of Theorem \ref{main-theorem} is placed in section \ref{pf-uni-1} and that of Theorem \ref{main-theorem-2} can be found in section \ref{pf-uni-2}. At the end of section \ref{pf-uni-2}, we list the brief sketch of the proof of Theorems \ref{main-theorem-3} and \ref{main-theorem-4} based on well known facts \cite{ma-wei}.

\section{Preliminary Estimates}\label{preliminary}

Since the proof of the main theorems requires very delicate analysis, in this section we list some established estimates in \cite{chen-lin-sharp,chen-lin,zhang1,zhang2}.

Let $w_k$ be a sequence of solutions of (\ref{r-equ}) with $\rho =\rho_k$. Suppose that $w_k$ blows up at $m$ points $\{p_1 \cdots,p_m\}$ as we have stated in section one. To describe the bubbling profile of $w_k$ near $p_j$, we set
\begin{equation}\label{n-sol}
	u_k=w_k-\log\bigg(\int_M He^{w_k}{\rm d}\mu \bigg)
\end{equation}
 and write the equation for $u_k$ as
\begin{equation}\label{n-equ}
\Delta_g u_k+\rho_k(He^{u_k}-1)=0\quad {\rm in} \ \; M.
\end{equation}
It is easy to observe from the definition of $u_k$ that $$\int_{M}He^{u_k}{\rm d}\mu=1.$$

From previous works of Liouville equations ( for example \cite{chen-lin-sharp} ),
\begin{equation}\label{local-cov}
 u_k-\bar{u}_k \ \to \sum_{j=1}^m 8\pi(1+\alpha_j)G(x,p_j) \quad {\rm in} \ \; {\rm C}_{loc}^2(M\backslash \{p_1,\cdots,p_m\})
\end{equation}
where $\bar{u}_k$ is the average of $u_k$ on $M$:
$$\bar{u}_k=\int_{M}u_k{\rm d}\mu.$$

For the convenience later we fix $r_0>0$ small  and $M_j\subset M, 1\leq j\leq m$ such that
\begin{equation}\label{Mj}
M=\bigcup_{j=1}^m \overline{M}_j;\quad M_j\cap M_l=\varnothing,\  {\rm if}\ j\neq l;\quad B(p_j,3r_0)\subset M_j, \ j=1,\cdots,m.
\end{equation}
According to this definition $M_1=M$, if $m=1$.

Then we use  $\lambda_{k,j}$ to denote
\begin{equation}\label{lambda_kj}
\lambda_{k,j}=\left\{
\begin{array}{lcl}
u_k(p_j) && {\rm if}\ \,\alpha_j\neq 0, \\
u_k(p_{k,j}) \mathrel{\mathop:}=\max_{B(p_j,r_0)}u_k  && {\rm if}\ \,\alpha_j= 0.
\end{array}
\right.
\end{equation}
and let $U_{k,j}$ be a global solution of
\begin{equation}\label{U_kj-equ}
\Delta U_{k,j}+\rho_kh_j(p_{k,j})|x-p_{k,j}|^{2\alpha_j}e^{U_{k,j}}=0 \quad  {\rm in} \ \; \mathbb{R}^2
\end{equation}
with the expression ($U_{k,j}$ is called a standard bubble):
\begin{equation}\label{U_kj}
U_{k,j}(x)=\lambda_{k,j}-2\log\Big(1+\frac{\rho_k h_j(p_{k,j})}{8(1+\alpha_j)^2}e^{\lambda_{k,j}}|x-p_{k,j}|^{2(1+\alpha_j)}\Big).
\end{equation}

It is well-known \cite{li-cmp,Bartolucci-Chen-Lin-T} that $u_k$ can be approximated by the standard bubbles $U_{k,j}$ near $p_j$ with $O(1)$ error:
\begin{equation}\label{standard-bubble}
  \big|u_k(x)-U_{k,j}(x)\big| \leq C, \quad x\in B(p_j,r_0).
\end{equation}

 As a consequence,
\begin{equation}\label{lambda-ij}
|\lambda_{k,i}-\lambda_{k,j}|\leq C, \quad 1\leq i,j \leq m.
\end{equation}
for some $C$ independent of $k$.
Furthermore, it is established in \cite{bart-taran-mass} that $\rho_*=\lim_{k\to +\infty}\rho_k$.

\medskip

Later, sharper estimates were obtained in \cite{zhang2,chen-lin} for $1\leq j \leq \tau$ and in \cite{chen-lin-sharp,zhang1,gluck} for $\tau+1\leq j\leq m$. In order to apply those estimates, we might consider the equation in terms of the flat metric and introduce the following notations.

\medskip

In $B(p_j,r_0)$, the flat metric is ${\rm d}s^2=e^{\phi_j}\big(({\rm d}x_1)^2+({\rm d}x_2)^2\big)$ with $\phi_j$ satisfying
\begin{equation}\label{phi-equ}
\left\{\begin{array}{lcl}
\Delta \phi_j+2Ke^{\phi_j}=0, &&  {\rm in} \ \; B(p_j,r_0),
\\
\phi_j(0)=|\nabla \phi_j(0)|=0, && \quad
\end{array}
\right.
\end{equation}
where $0$ is the coordinate of $p_j$, $\Delta =\sum _{i=1}^{2}\frac{\partial^2}{\partial x_i^2}$. In this local coordinate, equation (\ref{n-equ}) is equivalent to
\begin{equation}\label{flat-equ}
\Delta u_k+\rho_ke^{\phi_j}(He^{u_k}-1)=0\quad {\rm in} \ \; B(p_j,r_0).
\end{equation}
If we denote $\tilde{h}_j=h_je^{\phi_j}$, then (\ref{flat-equ}) can be written as follows:
\begin{equation}\label{flat-equ-2}
\Delta u_k+\rho_k\tilde{h}_j|x-p_j|^{2\alpha_j}e^{u_k}-\rho_ke^{\phi_j}=0\quad {\rm in} \ \; B(p_j,r_0).
\end{equation}

To state the more refined asymptotic analysis we introduce the following notations:
\begin{equation}\label{loc-mass}
\rho_{k,j}=\int_{B(p_{k,j},r_0)}\rho_kHe^{u_k}{\rm d}\mu,\quad 1\leq j\leq m,
\end{equation}
\begin{equation}\label{sigma1}
\sigma_k(x)=u_k(x)-\bar{u}_k-\sum_{j=1}^m\rho_{k,j} G(x,p_{k,j}), \quad x\in M\backslash \bigcup_{j=1}^mB(p_{k,j},\frac{r_0}{2}),
\end{equation}
\begin{equation}\label{G_kj}
G_{k,j}(x)=\rho_{k,j}R(x,p_{k,j})+\sum_{l\neq j}^{1,\cdots,m}\rho_{k,l}G(x,p_{k,l}),\quad x\in B(p_{k,j},r_0),
\end{equation}
where $R(x,p_{k,j})$ is the regular part of $G(x,p_{k,j})$. Finally
for $x\in B(p_{k,j},r_0)$, set
\begin{equation*}
\tilde{u}_{k,j}(x)=u_k(x)-\big(G_{k,j}(x)-G_{k,j}(p_{k,j})\big),
\end{equation*}
\begin{equation}\label{eta1}
\eta_{k,j}(x)=\tilde{u}_{k,j}(x)-U_{k,j}(x).
\end{equation}

\subsection{Sharper estimates}
\quad
\medskip

If $\alpha_j\in \mathbb{R}^+\setminus\mathbb{N}$, in order to obtain the refined estimates of the bubbling solutions, the second author considered the harmonic function $\psi_{k,j}$ in \cite{zhang2}, which satisfies
\begin{equation}\label{psi-equ}
\left\{\begin{array}{lcl}
\Delta \psi_{k,j}=0 && {\rm in} \ \; B(p_{k,j},r_0),
\\
\psi_{k,j} =\tilde{u}_{k,j}-\frac{1}{2\pi r_0}\int_{\partial B(p_{k,j},r_0)}\tilde{u}_{k,j} {\rm d}s && {\rm on} \ \; \partial B(p_{k,j},r_0).
\end{array}
\right.
\end{equation}

With the help of $\psi_{k,j}$, Zhang and Chen-Lin proved the following sharp estimate in \cite{zhang2}.
\begin{thmA}\label{Theorem zhang}\cite{zhang2,chen-lin}
	
	For $x\in B(p_{k,j},r_0)$, it holds that
	\begin{equation}\label{zhang}
	\begin{split}
	\eta_{k,j}(x)=&\psi_{k,j}(x)-\frac{2(1+\alpha_j)}{\alpha_j}\frac{\langle a,x-p_{k,j}\rangle}{1+\frac{\rho_k h_j(p_{(k,j)})}{8{(1+\alpha_j)^2}}e^{\lambda_{k,j}}|x-p_{k,j}|^{2(1+\alpha_j)}}\\
	& +d_j\log \big(2+e^{\frac{\lambda_{k,j}}{2(1+\alpha_j)}}|x-p_{k,j}|\big)e^{-\frac{\lambda_{k,j}}{1+\alpha_j}}+O(e^{-\frac{\lambda_{k,j}}{1+\alpha_j}}),
	\end{split}
	\end{equation}
	where $a=\nabla(\log h_j+G_{k,j})(p_{k,j}) \in \mathbb{R}^2$ and
	\begin{equation*}
		d_j=\frac{\pi}{(1+\alpha_j)\sin \frac{\pi}{1+\alpha_j}}\Big(\frac{8(1+\alpha_j)^2}{\rho_kh_j(p_{k,j})}\Big)^{\frac{1}{1+\alpha_j}}\big[\Delta\log h(p_j)+\rho_*-N^*-2K(p_j)\big].
	\end{equation*}
\end{thmA}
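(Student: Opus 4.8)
The plan is to follow the strategy of \cite{zhang2}: write down the elliptic equation satisfied by $\eta_{k,j}$, subtract off the two explicit particular solutions generated by the first and second order Taylor coefficients of the coefficient function of the bubble, and control the remainder by a linearized analysis around the standard bubble $U_{k,j}$, using crucially that $\alpha_j\notin\mathbb N$.

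First I would derive the equation for $\eta_{k,j}$. Writing $V_{k,j}(x)=\rho_k\tilde h_j(x)e^{G_{k,j}(x)-G_{k,j}(p_{k,j})}$, so that $V_{k,j}(p_{k,j})=\rho_kh_j(p_{k,j})$ because $\phi_j(0)=0$, and subtracting the bubble equation (\ref{U_kj-equ}) from (\ref{flat-equ-2}) rewritten for $\tilde u_{k,j}$, one gets
\[
\Delta\eta_{k,j}+\rho_kh_j(p_{k,j})|x-p_{k,j}|^{2\alpha_j}e^{U_{k,j}}\Big(\tfrac{V_{k,j}(x)}{\rho_kh_j(p_{k,j})}e^{\eta_{k,j}}-1\Big)=E^0_{k,j},
\]
where $E^0_{k,j}$ collects the $\rho_ke^{\phi_j}$ term and the defect coming from the fact that $G_{k,j}$ is not exactly harmonic in $B(p_{k,j},r_0)$ (these two partly cancel). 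Taylor expanding $\log\big(V_{k,j}/(\rho_kh_j(p_{k,j}))\big)$ to second order at $p_{k,j}$ and $e^{\eta_{k,j}}=1+\eta_{k,j}+O(\eta_{k,j}^2)$, and splitting the quadratic Taylor term into its radial and non-radial parts, this becomes a Schr\"odinger-type equation
\[
\Delta\eta_{k,j}+W_{k,j}\eta_{k,j}=-W_{k,j}\langle a,x-p_{k,j}\rangle-\tfrac14 W_{k,j}\big[\Delta\log h(p_j)+\rho_*-N^*-2K(p_j)\big]|x-p_{k,j}|^2+E_{k,j},
\]
with $W_{k,j}=\rho_kh_j(p_{k,j})|x-p_{k,j}|^{2\alpha_j}e^{U_{k,j}}$, $a=\nabla(\log h_j+G_{k,j})(p_{k,j})$, and $E_{k,j}$ containing the non-radial quadratic part, the cubic Taylor remainder, the term $W_{k,j}\eta_{k,j}^2$, and $E^0_{k,j}$. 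The identification of the trace of the Hessian of $\log\big(V_{k,j}/(\rho_kh_j(p_{k,j}))\big)$ at $p_{k,j}$ with $\Delta\log h(p_j)+\rho_*-N^*-2K(p_j)+o(1)$ is the first technical step; it requires a careful expansion of $\tilde h_j$, $\phi_j$ and $G_{k,j}$ near $p_{k,j}$, using $\Delta\phi_j(0)=-2K(p_j)$ from (\ref{phi-equ}), the identity $\Delta_g\log H=\Delta_g\log h-N^*$ away from the singular sources, and $\sum_l\rho_{k,l}\to\rho_*=\lim_k\rho_k$.

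Second, I would solve the linear model explicitly. In the rescaled variable $y=e^{\lambda_{k,j}/(2(1+\alpha_j))}(x-p_{k,j})$ one has $e^{\lambda_{k,j}}|x-p_{k,j}|^{2(1+\alpha_j)}=|y|^{2(1+\alpha_j)}$, $W_{k,j}$ becomes the standard linearized potential, and the associated model operator on $\mathbb R^2$ is $L_0\phi=\Delta\phi+\frac{8(1+\alpha_j)^2|y|^{2\alpha_j}}{(1+|y|^{2(1+\alpha_j)})^2}\phi$ (after normalizing the constant $c_{k,j}=\rho_kh_j(p_{k,j})/8(1+\alpha_j)^2$ away). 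For $\alpha_j\notin\mathbb N$ the bounded solutions of $L_0$ form a one-dimensional space spanned by the radial dilation mode $\frac{1-|y|^{2(1+\alpha_j)}}{1+|y|^{2(1+\alpha_j)}}$; this is where the non-quantization hypothesis enters the local analysis, since for integer $\alpha_j$ the classification of entire solutions of the singular Liouville equation produces extra kernel directions. For the first order source, the ansatz $\phi=\frac{\langle b,y\rangle}{1+c_{k,j}|y|^{2(1+\alpha_j)}}$ reduces to an inhomogeneous Euler equation in the $e^{\pm i\theta}$ sector, and matching coefficients forces $b=-\frac{2(1+\alpha_j)}{\alpha_j}a$; undoing the scaling gives the first displayed term of (\ref{zhang}). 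The radial second order source resonates with the dilation mode, so variation of parameters produces a particular solution growing logarithmically, whose coefficient is the radial Green's function weight, evaluated via the Beta-function identity $\int_0^\infty\frac{s^{\frac{1}{1+\alpha_j}-1}}{1+s}\,{\rm d}s=\frac{\pi}{\sin(\pi/(1+\alpha_j))}$ together with the rescaling factor $c_{k,j}^{-1/(1+\alpha_j)}$; tracking the powers of $e^{\lambda_{k,j}}$ then reproduces exactly $d_j\log\big(2+e^{\lambda_{k,j}/(2(1+\alpha_j))}|x-p_{k,j}|\big)e^{-\lambda_{k,j}/(1+\alpha_j)}$ with the stated formula for $d_j$.

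Finally I would subtract from $\eta_{k,j}$ these two explicit particular solutions and the harmonic corrector $\psi_{k,j}$ of (\ref{psi-equ}) (which by construction matches the boundary discrepancy on $\partial B(p_{k,j},r_0)$ and has zero average there), and prove that the remainder $r_{k,j}$ is $O(e^{-\lambda_{k,j}/(1+\alpha_j)})$ in $B(p_{k,j},r_0)$. This is done by a bootstrap coupled with a blow-up argument: from the $O(1)$ bound (\ref{standard-bubble}) one first gets a crude bound on $r_{k,j}$, feeding it back into $E_{k,j}$ improves the right-hand side, and after finitely many iterations the order $e^{-\lambda_{k,j}/(1+\alpha_j)}$ is reached. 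The a priori estimate at each step is proved by contradiction: if $\|r_{k,j}\|$ in the suitable weighted norm were not controlled, rescale by that norm; the rescaled functions satisfy $L_0(\cdot)=o(1)$ with uniformly bounded weighted data, hence converge to a bounded solution of the limiting linearized singular Liouville equation on $\mathbb R^2$, which must be a multiple of the dilation mode; but the normalization at $p_{k,j}$ --- recall $\lambda_{k,j}=u_k(p_{k,j})$ with $p_{k,j}$ the local maximum point when $\alpha_j=0$ and $\tilde u_{k,j}(p_{k,j})=U_{k,j}(p_{k,j})$ when $\alpha_j\neq0$, together with the removal of the kernel and harmonic-polynomial components into $\psi_{k,j}$ --- forces the limit to vanish, a contradiction. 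I expect the main obstacle to be exactly this last step: the resonant second order analysis, and keeping $r_{k,j}$ orthogonal to the approximate kernel uniformly in $k$ while the weights degenerate as $\lambda_{k,j}\to\infty$. The hypothesis $\alpha_j\notin\mathbb N$ is precisely what guarantees that $L_0$ has the expected one-dimensional kernel and the invertibility in the weighted space, so the whole scheme genuinely breaks for quantized sources.
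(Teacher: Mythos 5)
This statement is quoted by the paper as an established preliminary (Theorem~A, cited from \cite{zhang2,chen-lin}) and is not proved in the paper itself, so there is no in-paper proof to compare against; judged against the actual arguments in those references, your outline is a faithful reconstruction of the same approach. You correctly identify all the essential ingredients --- the equation for $\eta_{k,j}$, the explicit first-order corrector with the $-\frac{2(1+\alpha_j)}{\alpha_j}$ coefficient (which exists precisely because $\frac{\langle a,y\rangle}{1+|y|^{2(1+\alpha_j)}}$ is not in the kernel when $\alpha_j\neq0$), the resonance of the radial second-order source with the dilation mode yielding the logarithmic term with the constant $\frac{\pi}{(1+\alpha_j)\sin\frac{\pi}{1+\alpha_j}}$ via the Beta-function identity, the identification of $\Delta\log h(p_j)+\rho_*-N^*-2K(p_j)$ from the expansions of $\log h_j$, $\phi_j$ and $G_{k,j}$, and the weighted a priori/bootstrap estimate resting on the one-dimensional kernel classification for $\alpha_j\notin\mathbb N$ --- these are exactly the steps carried out in \cite{zhang2} and refined in \cite{chen-lin}.
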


In \cite{chen-lin},the following estimates for $\psi_{k,j}$ and $\sigma_k$ are established:
\begin{thmA}\label{Theorem chen-lin1}\cite{chen-lin}
	\begin{equation}\label{psi-est}
	|\psi_{k,j}(x)|=O(e^{-\frac{\lambda_{k,j}}{1+\alpha_1}}),\quad x\in B(p_{k,j},r_0).
	\end{equation}
	\begin{equation}\label{sigma2}
	|\sigma_k(x)|+|\nabla\sigma_k(x)|=O(e^{-\frac{\lambda_{k,1}}{1+\alpha_1}}),\quad x\in M\backslash \big(\bigcup_{j=1}^mB(p_{k,j},\frac{r_0}{2})\big).
	\end{equation}
\end{thmA}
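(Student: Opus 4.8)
\textbf{Proof proposal for Theorem A \ref{Theorem chen-lin1}.}

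The plan is to establish the two estimates \eqref{psi-est} and \eqref{sigma2} together by a bootstrap argument anchored in Theorem A \ref{Theorem zhang}, exploiting the fact that the two quantities $\psi_{k,j}$ and $\sigma_k$ control each other through the Green's function representation. Throughout, write $\lambda_k=\lambda_{k,1}$ (so $\lambda_{k,j}=\lambda_k+O(1)$ for all $j$ by \eqref{lambda-ij}) and abbreviate $\epsilon_k=e^{-\lambda_k/(1+\alpha_1)}$; since $\alpha_1\geq\alpha_j$ for all $j$, one has $e^{-\lambda_{k,j}/(1+\alpha_j)}\leq C\epsilon_k$. First I would record the scaling picture: near each $p_{k,j}$ the function $\tilde u_{k,j}$ is $U_{k,j}$ plus a harmonic-plus-lower-order correction, and since $U_{k,j}(x)-\lambda_{k,j}=-2\log(1+c_{k,j}e^{\lambda_{k,j}}|x-p_{k,j}|^{2(1+\alpha_j)})$, on the annulus $|x-p_{k,j}|\sim r_0$ one has $u_k(x)=-\frac{\lambda_{k,j}}{1+\alpha_j}\cdot\tfrac{2(1+\alpha_j)}{?}\dots$; more precisely $U_{k,j}(x)=-2(1+\alpha_j)\log|x-p_{k,j}|-\lambda_{k,j}+2\log\frac{8(1+\alpha_j)^2}{\rho_kh_j(p_{k,j})}+o(1)$ there, which pins down the boundary data of $\tilde u_{k,j}$ up to additive constants and links $\rho_{k,j}$ to $8\pi(1+\alpha_j)+O(\epsilon_k^{?})$.

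The core of the argument is a closed loop. Step one: assume a provisional bound $\|\sigma_k\|_{L^\infty(M\setminus\cup B(p_{k,j},r_0/4))}\leq \Lambda_k$, to be improved. Using \eqref{sigma1}, on $\partial B(p_{k,j},r_0)$ the trace of $\tilde u_{k,j}-$(its average) equals $\psi_{k,j}$ by the very definition \eqref{psi-equ}; on the other hand that same trace can be computed from $\sigma_k$ and the regular parts $G_{k,j}$ of the Green's function via \eqref{G_kj} and \eqref{eta1}. Matching these two descriptions on the overlap annulus $B(p_{k,j},r_0)\setminus B(p_{k,j},r_0/2)$ shows that the boundary oscillation of $\psi_{k,j}$ is controlled by $\Lambda_k$ plus the oscillation of $\eta_{k,j}$; but by Theorem A \ref{Theorem zhang}, on that annulus $\eta_{k,j}(x)=\psi_{k,j}(x)+O(\epsilon_k)+O(d_j\epsilon_k\log(1/\epsilon_k))=\psi_{k,j}(x)+O(\epsilon_k|\log\epsilon_k|)$ because the gradient term $\tfrac{2(1+\alpha_j)}{\alpha_j}\langle a,x-p_{k,j}\rangle/(1+\cdots)$ is $O(e^{-\lambda_{k,j}/(1+\alpha_j)})$ on $|x-p_{k,j}|\sim r_0$ (the denominator is of size $e^{\lambda_{k,j}}r_0^{2(1+\alpha_j)}$). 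Since $\psi_{k,j}$ is harmonic in $B(p_{k,j},r_0)$ with zero average on the boundary, the maximum principle and Harnack give $\|\psi_{k,j}\|_{L^\infty(B(p_{k,j},r_0))}\leq C\,\mathrm{osc}_{\partial B}\psi_{k,j}\leq C(\Lambda_k+\epsilon_k|\log\epsilon_k|)$. Step two: feed this back. The function $\sigma_k$ satisfies, on $M\setminus\cup B(p_{k,j},r_0/2)$, an equation of the form $\Delta_g\sigma_k=-\rho_kHe^{u_k}+\rho_k+(\text{terms from }\sum\rho_{k,j}\Delta_g G)$; away from the bubbles $\rho_kHe^{u_k}=\rho_k e^{O(1)+u_k}$ with $u_k\to -\infty$ there like $-\frac{\lambda_{k,j}}{1+\alpha_j}\times 2$ on the boundary $\partial B(p_{k,j},r_0/2)$, hence $\rho_kHe^{u_k}=O(\epsilon_k^{2})$ pointwise in the region under consideration once $|x-p_{k,j}|\geq r_0/2$. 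Therefore $\Delta_g\sigma_k=O(\epsilon_k^2)$ in the interior, and on each $\partial B(p_{k,j},r_0/2)$ the boundary value of $\sigma_k$ equals $\tilde u_{k,j}-\bar u_k-\sum\rho_{k,l}G-\rho_{k,j}R=\eta_{k,j}+U_{k,j}-(\text{constants})+\dots$, which after subtracting the radial part of $U_{k,j}$ (absorbed into the $\rho_{k,j}G$ term up to an error from $\rho_{k,j}-8\pi(1+\alpha_j)$) is $O(\|\psi_{k,j}\|_\infty)+O(\epsilon_k|\log\epsilon_k|)$. Elliptic estimates (interior $W^{2,p}$ plus Schauder, or simply the Green representation for $M$) then give $\|\sigma_k\|_{L^\infty}+\|\nabla\sigma_k\|_{L^\infty}\leq C\big(\max_j\|\psi_{k,j}\|_\infty+\epsilon_k|\log\epsilon_k|\big)$. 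Combining the two steps: $\Lambda_k^{\mathrm{new}}\leq C(\Lambda_k+\epsilon_k|\log\epsilon_k|)$ — which is not yet a contraction, so the genuine iteration must go through the finer structure.

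To close the loop properly I would not treat $\|\psi_{k,j}\|_\infty$ and $\|\sigma_k\|_\infty$ as free parameters but instead track their leading-order Fourier/spherical-harmonic content. The key improvement is that $\psi_{k,j}$, being harmonic with mean-zero trace, is determined by its first nontrivial mode, and the estimate \eqref{zhang} shows the boundary data of $\tilde u_{k,j}$ agrees with that of $U_{k,j}$ plus a term whose size is genuinely $O(\epsilon_k)$ coming from $\eta_{k,j}$ minus $\psi_{k,j}$ — i.e., the map (boundary data of $\sigma_k$) $\mapsto\psi_{k,j}$ loses only the small factor, not a constant. Concretely: set $A_k=\max_j\|\psi_{k,j}\|_{L^\infty(B(p_{k,j},r_0))}$ and $B_k=\|\sigma_k\|_{L^\infty(M\setminus\cup B(p_{k,j},r_0/2))}$. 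Step one gives $A_k\leq C B_k+C\epsilon_k|\log\epsilon_k|$ and a symmetric argument — using that the average $\frac1{2\pi r_0}\int_{\partial B}\tilde u_{k,j}$ is a constant that gets absorbed, and that $\sigma_k$ has mean zero against the Green's kernel — gives $B_k\leq \theta A_k + C\epsilon_k|\log\epsilon_k|$ with $\theta<1$, because the harmonic extension of $\psi_{k,j}$ into the complement of $B(p_{k,j},r_0/2)$ decays by a definite factor $(r_0/2/r_0)^1=1/2$ in the first mode while higher modes decay faster, and the monopole mode of $\psi_{k,j}$ is zero by construction. Hence $A_k,B_k\leq C\epsilon_k|\log\epsilon_k|$, and then one more pass through \eqref{zhang} — now knowing $\psi_{k,j}=O(\epsilon_k|\log\epsilon_k|)$ on the relevant annulus and hence $\eta_{k,j}=O(\epsilon_k|\log\epsilon_k|)$ there, which upgrades the Green-representation input for $\sigma_k$ — removes the logarithm by observing that the $d_j\log(2+e^{\lambda_{k,j}/(2(1+\alpha_j))}|x-p_{k,j}|)e^{-\lambda_{k,j}/(1+\alpha_j)}$ term in \eqref{zhang}, evaluated at $|x-p_{k,j}|\sim r_0$, is $d_j\cdot O(\lambda_{k,j})\epsilon_k$ but its \emph{oscillation} on the annulus is only $O(\epsilon_k)$ since $\log(e^{\lambda/2}|x|)=\tfrac{\lambda}{2}+\log|x|$ and the constant $\tfrac\lambda2$ is killed by the mean-zero normalization of $\psi_{k,j}$; so only oscillations feed the bootstrap, and they are $O(\epsilon_k)$. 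This yields $A_k,B_k=O(\epsilon_k)=O(e^{-\lambda_{k,1}/(1+\alpha_1)})$, which is \eqref{psi-est} and \eqref{sigma2}; the gradient bound in \eqref{sigma2} follows from the $\Delta_g\sigma_k=O(\epsilon_k^2)$ interior equation plus the now-established $C^0$ boundary bound by standard elliptic gradient estimates.

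\textbf{Main obstacle.} The delicate point is exactly the construction of the contraction: naively both inequalities $A_k\lesssim B_k$ and $B_k\lesssim A_k$ have constant $1$, so one must extract the genuine gain $\theta<1$ from geometry. This requires (i) correctly isolating and discarding the monopole modes — the subtracted boundary averages in \eqref{psi-equ} and the Green-function normalizations $\int_M G\,d\mu=0$ are precisely what make this possible — and (ii) handling the error $\rho_{k,j}-8\pi(1+\alpha_j)$, which appears whenever one passes a local mass through a Green's function; one needs $\rho_{k,j}=8\pi(1+\alpha_j)+O(\epsilon_k|\log\epsilon_k|)$ a priori, which itself comes from integrating equation \eqref{flat-equ-2} over $B(p_{k,j},r_0)$ and using \eqref{standard-bubble} together with \eqref{zhang}. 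Keeping these two bookkeeping items consistent across the iteration — and verifying that the $d_j$-term's \emph{large} pointwise size $O(\lambda_k\epsilon_k)$ never actually enters because only boundary oscillations propagate — is where the argument must be executed with care, and is the heart of why these estimates are attributed as a nontrivial theorem in \cite{chen-lin} rather than a routine corollary of \cite{zhang2}.
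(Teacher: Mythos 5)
The paper does not prove this statement: it is quoted verbatim from \cite{chen-lin} as a known input (Theorem A), so there is no in-paper argument to compare yours against; I can only assess your reconstruction on its own terms. Your overall architecture (couple $\psi_{k,j}$ and $\sigma_k$ through the Green representation, use the mean-zero normalizations to kill monopole modes, and observe that only oscillations of the $d_j\log(\cdot)$ term propagate) is in the right spirit, and several of your local computations are correct, e.g.\ that on $|x-p_{k,j}|\sim r_0$ one has $\eta_{k,j}-\psi_{k,j}=O(\lambda_{k,j}e^{-\lambda_{k,j}/(1+\alpha_j)})$ and that $\|\psi_{k,j}\|_{L^\infty(B)}\le \mathrm{osc}_{\partial B}\,\psi_{k,j}$.

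The genuine gap is the step you yourself flag as the heart of the matter: the inequality $B_k\le\theta A_k+C\epsilon_k|\log\epsilon_k|$ with $\theta<1$. The mechanism you offer --- mode-by-mode decay of the harmonic extension of $\psi_{k,j}$ from $\partial B(p_{k,j},r_0/2)$ into the exterior --- does not apply, because $\sigma_k$ is not a harmonic extension of $\psi_{k,j}$: it is determined by the global representation $\sigma_k(x)=\sum_j\int_{B_j}\big(G(y,x)-G(p_{k,j},x)\big)\rho_kHe^{u_k}\,{\rm d}\mu(y)+O(e^{-\lambda_{k,1}})$ on a compact surface minus several balls, where the maximum principle yields constant $1$, not $1/2$, and the boundary data on $\partial B(p_{k,j},r_0/2)$ are $\eta_{k,j}$ plus non-harmonic corrections rather than $\psi_{k,j}$. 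Since your first inequality reads $A_k\le CB_k+\dots$ with an unspecified $C$, the loop $A_k\le C\theta A_k+\dots$ does not close even if $\theta<1$ were granted. The correct resolution --- the one in \cite{chen-lin}, and the same device this paper uses later for $\varsigma_k$ in Step 2 of Lemma \ref{est1} and in Lemma \ref{lem-C1-est} --- is that the feedback from the bubble region into $\sigma_k$ enters only through the first and second moments $\int_{B_j}(y-p_{k,j})\rho_kHe^{u_k}$ and $\int_{B_j}|y-p_{k,j}|^2\rho_kHe^{u_k}$, which by the radial symmetry of the leading bubble and the structure of (\ref{zhang}) are $O(e^{-\lambda_{k,j}/(1+\alpha_j)})$ even when one only knows a priori that $\psi_{k,j}=o(1)$ (the constant part of $\psi_{k,j}$ drops out of the first moment by symmetry, and its gradient contributes $O(\|\psi_{k,j}\|_\infty\epsilon_{k,j}^2)$). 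So the coupling constant is $o(1)$ rather than some $\theta<1$, no contraction is needed, and a single pass through the representation formula gives $\sigma_k=O(e^{-\lambda_{k,1}/(1+\alpha_1)})$, after which $\psi_{k,j}=O(e^{-\lambda_{k,j}/(1+\alpha_1)})$ follows from the fact that its boundary datum is exactly $\sigma_k$ minus its average on $\partial B(p_{k,j},r_0)$. (A minor further slip: $\rho_kHe^{u_k}$ away from the bubbles is $O(e^{-\lambda_{k,1}})$, which is $o(\epsilon_k)$ but not $O(\epsilon_k^2)$ unless $\alpha_1\ge1$; this is harmless here.)
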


Then, by Theorem \ref{Theorem zhang} and Theorem \ref{Theorem chen-lin1}, we have
\begin{equation}\label{eta2}
|\eta_{k,j}(x)|=O(e^{-\frac{\lambda_{k,j}}{2(1+\alpha_j)}}+e^{-\frac{\lambda_{k,j}}{1+\alpha_1}}),\quad x\in B(p_{k,j},r_0),\quad 1\leq j\leq \tau.
\end{equation}
For the case $\tau+1\leq j \leq m $, the estimate for $\eta_{k,j}$, established in \cite{chen-lin-sharp}\cite{zhang1}\cite{gluck}, is
\begin{equation}\label{eta3}
|\eta_{k,j}(x)|=O(\lambda_{k,j}e^{-\lambda_{k,j}}),\quad x\in B(p_{k,j},r_0),\quad \tau+1\leq j\leq m.
\end{equation}

Moreover, according to the proof of Theorem 3.5 in \cite{chen-lin}, the following estimate holds:
\begin{equation}\label{uk-ave-1}
\bar{u}_k+\lambda_{k,j}+2\log\dfrac{\rho_kh_j(p_{k,j})}{8(1+\alpha_j)^2}+G_{k,j}(p_{k,j})+\frac{d_j}{2(1+\alpha_j)}\lambda_{k,j} e^{-\frac{\lambda_{k,j}}{1+\alpha_1}}=O(e^{-\frac{\lambda_{k,j}}{1+\alpha_1}}).
\end{equation}
As a consequence, we have
\begin{equation}\label{uk-ave-2}
\lambda_{k,j}-\lambda_{k,1}=2\log\dfrac{(1+\alpha_j)^2h_1(p_{k,1})}{(1+\alpha_1)^2 h_j(p_{k,j})}+G_{k,1}(p_{k,1})-G_{k,j}(p_{k,j})+O(e^{-\frac{\lambda_{k,1}}{2(1+\alpha_1)}}).
\end{equation}

For the difference between $\rho_k$ and $\rho_*$, $\rho_k$ and $8\pi(1+\alpha_j)$, the following estimates also have been proved in \cite{chen-lin,chen-lin-sharp}.
\begin{thmA}\label{Theorem chen-lin2}\cite{chen-lin,chen-lin-sharp}	
	\begin{align}
	&\rho_{k,j}-8\pi(1+\alpha_j)=2\pi d_je^{-\frac{\lambda_{k,j}}{1+\alpha_j}}+O\big(e^{-\frac{1+\gamma}{1+\alpha_1}\lambda_{k,1}}\big), && 1\leq j\leq \tau, \label{rho-kj-1}
	\\
	&\rho_{k,j}-8\pi=O\big(\lambda_{k,j}e^{-\lambda_{k,j}}\big), &&\tau+1\leq j\leq m,  \label{rho-kj-2}
	\\
	&\rho_k-\rho_*=L^*e^{-\frac{\lambda_{k,1}}{1+\alpha_1}}+O\big(e^{-\frac{1+\gamma}{1+\alpha_1}\lambda_{k,1}}\big),&&  \label{rho-k}
	\end{align}
	with fixed $\gamma\in (0,\min({\alpha_1,\frac{1}{2}}))$ small and
	$$L^*=\dfrac{2\pi^2}{(1+\alpha_1)\sin\frac{\pi}{1+\alpha_1}}e^{-\frac{G_1^*(p_{1})}{1+\alpha_1}}\Big(\frac{8(1+\alpha_1)^2}{\rho_* h_1(p_1)^2}\Big)^{\frac{1}{1+\alpha_1}}L(\mathbf{p}).$$
\end{thmA}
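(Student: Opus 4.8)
\textbf{Proof strategy for Theorem \ref{Theorem chen-lin2}.}

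The plan is to treat the three expansions together, starting from the refined pointwise description in Theorem \ref{Theorem zhang} (together with the estimates \eqref{psi-est}, \eqref{sigma2}, \eqref{eta2}, \eqref{eta3}) and converting it into sharp mass expansions via a Pohozaev-type identity on each ball $B(p_{k,j},r_0)$. First I would recall the local mass $\rho_{k,j}=\int_{B(p_{k,j},r_0)}\rho_k He^{u_k}{\rm d}\mu$ and write the equation for $\tilde u_{k,j}$ in the flat coordinate near $p_{k,j}$, so that $\tilde u_{k,j}=U_{k,j}+\eta_{k,j}$ with $\eta_{k,j}$ controlled by \eqref{zhang}. Integrating the equation $\Delta_g u_k+\rho_k(He^{u_k}-1)=0$ over $B(p_{k,j},r_0)$ and using $\int_M H e^{u_k}=1$ gives $\rho_{k,j}$ in terms of a boundary integral of $\partial_\nu u_k$ on $\partial B(p_{k,j},r_0)$; the leading part of that boundary integral is exactly $8\pi(1+\alpha_j)$ coming from the standard bubble $U_{k,j}$, and the correction is governed by $\eta_{k,j}$ and the harmonic piece $\psi_{k,j}$. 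For $j\le \tau$ the dominant correction is the $d_j\log(\cdots)e^{-\lambda_{k,j}/(1+\alpha_j)}$ term in \eqref{zhang}: integrating it over the annulus and using the explicit form of $U_{k,j}$ produces the coefficient $2\pi d_j$ in front of $e^{-\lambda_{k,j}/(1+\alpha_j)}$, with the remaining terms (the $\langle a,x\rangle$ term, which is odd and hence contributes a higher order quantity after we use $D(\mathbf p)=0$ only at the final stage — here it just contributes $O(e^{-\frac{1+\gamma}{1+\alpha_1}\lambda_{k,1}})$ because $|a|$ itself is small or because of cancellation in the angular integration, $\psi_{k,j}$ being $O(e^{-\lambda_{k,1}/(1+\alpha_1)})$ which when squared against $\nabla U_{k,j}$ gives the stated error) all absorbed into the error term. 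This yields \eqref{rho-kj-1}.

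For \eqref{rho-kj-2}, i.e.\ the regular blowup points $j\ge \tau+1$, I would use instead the sharper estimate \eqref{eta3}, $|\eta_{k,j}|=O(\lambda_{k,j}e^{-\lambda_{k,j}})$, which is the Chen--Lin/Zhang/Gluck estimate at a non-singular simple blowup point. The same boundary-integral computation, now with $\alpha_j=0$ so the standard bubble has the usual $1/(1+|x|^2)^2$ profile, gives $\rho_{k,j}-8\pi=O(\lambda_{k,j}e^{-\lambda_{k,j}})$ directly, since the boundary term of $\nabla\eta_{k,j}$ is controlled by differentiating \eqref{eta3} (elliptic estimates on the annulus $r_0/2<|x-p_{k,j}|<r_0$, where the equation is uniformly elliptic with smooth coefficients) and then integrating. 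Summing \eqref{rho-kj-1} over $1\le j\le \tau$, \eqref{rho-kj-2} over $\tau+1\le j\le m$, and using $\sum_j \rho_{k,j}=\rho_k+O(e^{-\lambda_{k,1}/(1+\alpha_1)})$ — which follows from \eqref{sigma2} because the exterior mass $\rho_k\int_{M\setminus\cup B}He^{u_k}$ is exponentially small — together with $\rho_*=8\pi\sum_j(1+\alpha_j)$, gives $\rho_k-\rho_*=\sum_{j=1}^\tau 2\pi d_j e^{-\lambda_{k,j}/(1+\alpha_j)}+O(e^{-\frac{1+\gamma}{1+\alpha_1}\lambda_{k,1}})$. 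The last step is to identify this sum with $L^* e^{-\lambda_{k,1}/(1+\alpha_1)}$: since $\alpha_1=\cdots=\alpha_t$ are the largest exponents, only $j=1,\dots,t$ contribute to leading order (the terms with $j>t$ are $e^{-\lambda_{k,j}/(1+\alpha_j)}$ with $\lambda_{k,j}$ comparable to $\lambda_{k,1}$ by \eqref{lambda-ij} but a strictly smaller exponent $1/(1+\alpha_j)>1/(1+\alpha_1)$... wait, larger $\alpha$ means smaller $1/(1+\alpha)$, so the $j\le t$ terms decay \emph{slower}, hence dominate), and using \eqref{uk-ave-2} to replace $e^{-\lambda_{k,j}/(1+\alpha_1)}$ by $e^{-\lambda_{k,1}/(1+\alpha_1)}$ times the explicit ratio of $h$'s and Green's function values, together with the formula for $d_j$ and the definition \eqref{L} of $L(\mathbf p)$, assembles the constant $L^*$ exactly as stated.

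The main obstacle I anticipate is controlling the error terms uniformly at the order $e^{-\frac{1+\gamma}{1+\alpha_1}\lambda_{k,1}}$: the naive bound on the boundary integral of $\nabla\eta_{k,j}$ coming from \eqref{zhang} has several competing pieces (the harmonic $\psi_{k,j}$, the odd term with coefficient $a=\nabla(\log h_j+G_{k,j})(p_{k,j})$, the logarithmic term, and the $O(e^{-\lambda_{k,j}/(1+\alpha_j)})$ remainder), and one must check that after the angular integration the genuinely leading correction is precisely $2\pi d_j e^{-\lambda_{k,j}/(1+\alpha_j)}$ and everything else is genuinely of order $e^{-(1+\gamma)\lambda_{k,1}/(1+\alpha_1)}$ for the chosen $\gamma\in(0,\min(\alpha_1,\tfrac12))$. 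In particular, the cross term between $\psi_{k,j}$ (size $e^{-\lambda_{k,1}/(1+\alpha_1)}$) and $\nabla U_{k,j}$ on the boundary must be shown to either vanish by the mean-value property of harmonic functions against the radial bubble or to be higher order; and the odd term, whose coefficient $|a|$ need not be small before invoking $D(\mathbf p)=0$, contributes to the mass only through its interaction with non-radial parts of $\eta_{k,j}$, which is quadratically small. This bookkeeping — essentially reproducing the relevant parts of \cite{zhang2} and \cite{chen-lin-sharp} — is delicate but routine once the Pohozaev identity is set up; I would organize it as a sequence of lemmas, one per error contribution, rather than a single estimate.
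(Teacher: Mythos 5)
This statement is the paper's Theorem C: it is quoted verbatim from Chen--Lin \cite{chen-lin,chen-lin-sharp} and the paper contains no proof of it, so there is no internal argument to compare yours against. Judged as a reconstruction of the cited proofs, your overall architecture is right -- local mass expansions at each blowup point, summation over $j$ using the exponentially small exterior mass, and assembly of $L^*$ from $d_j$ via \eqref{uk-ave-2} -- and your bookkeeping in the last step (only $j\le t$ survive because larger $\alpha_j$ gives the slower decay $e^{-\lambda_{k,j}/(1+\alpha_j)}$; the $t+1\le j\le\tau$ terms are absorbed for $\gamma$ small; the constant $L^*$ comes out of $2\pi\sum_{j\le t} d_j e^{-\lambda_{k,j}/(1+\alpha_1)}$ together with \eqref{uk-ave-2}) is correct.

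There are, however, two concrete soft spots. First, the identity you lead with -- integrating $\Delta_g u_k+\rho_k(He^{u_k}-1)=0$ over $B(p_{k,j},r_0)$ -- is essentially vacuous for this purpose: if you evaluate the boundary flux $\int_{\partial B}\partial_\nu u_k$ from the global Green representation \eqref{sigma1}, the $\rho_{k,j}$ on both sides cancels and you only learn that the exterior mass is small; if instead you evaluate it from the inner expansion $u_k=U_{k,j}+G_{k,j}-G_{k,j}(p_{k,j})+\eta_{k,j}$, the entire content of \eqref{rho-kj-1} sits in $\int_{\partial B}\partial_\nu\eta_{k,j}$, and here is the real gap: the remainder in \eqref{zhang} is only $O(e^{-\lambda_{k,j}/(1+\alpha_j)})$ in $L^\infty$, i.e.\ \emph{the same order} as the coefficient $2\pi d_j e^{-\lambda_{k,j}/(1+\alpha_j)}$ you are trying to extract, so \eqref{zhang} as stated cannot yield \eqref{rho-kj-1} with error $O(e^{-\frac{1+\gamma}{1+\alpha_1}\lambda_{k,1}})$. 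The derivation in \cite{chen-lin,chen-lin-sharp} does not stop at \eqref{zhang}; it couples the quadratic Pohozaev identity (which determines $\rho_{k,j}$ from $\rho_{k,j}^2$, breaking the tautology) with the finer interior estimates that produce the $\gamma$-improved remainder, and this is the part you cannot outsource to "delicate but routine". Second, a minor order mismatch: you state the exterior mass is $O(e^{-\lambda_{k,1}/(1+\alpha_1)})$, which would be the \emph{same} size as the main term $L^*e^{-\lambda_{k,1}/(1+\alpha_1)}$ in \eqref{rho-k} and hence not acceptable; the correct bound, coming from $e^{\bar u_k}\sim e^{-\lambda_{k,1}}$ via \eqref{uk-ave-1} rather than from \eqref{sigma2} alone, is $O(e^{-\lambda_{k,1}})$, which is $O(e^{-\frac{1+\gamma}{1+\alpha_1}\lambda_{k,1}})$ precisely because $\gamma<\alpha_1$.
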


\smallskip

If $\tau<m$, as in \cite{chen-lin-sharp} and \cite{bart-4}, non-degeneracy condition $\det\big(D^2f^*(p_{\tau+1},\cdot,p_m)\big)\neq0$ leads to
\begin{equation}\label{p_kj-location}
|p_{k,j}-p_j|=O(\lambda_{k,j}e^{-\lambda_{k,j}}),\quad \tau+1\leq j\leq m.
\end{equation}
Futhermore, in \cite{chen-lin}, the authors showed that
\begin{equation}\label{first-deriv-est}
\nabla(\log h+G_j^*)(p_{k,j})=O(e^{-\frac{\lambda_{k,j}}{1+\alpha_1}}),\quad \tau+1\leq j\leq m.
\end{equation}

\subsection{The kernel of the linearized equations}
\quad
\medskip

In the proof of the uniqueness, we need some facts about the linearized equation after the appropriate rescale.

\medskip

For $\alpha\in \mathbb{R}^+\setminus\mathbb{N}$, Chen-Lin proved the following lemma in \cite{chen-lin}.
\begin{lemA}\label{linear-lem-1}
	Suppose $ \alpha>0 $ is not an integer, $ \varphi $ is a $C^2$-function that satisfies
	\begin{equation*}
		\left\{\begin{array}{ll}
		\Delta \varphi+|x|^{2\alpha}e^{U_\alpha}\varphi=0\quad & {\rm in} \ \; \mathbb{R}^2,
		\\
		|\varphi| \leq (1+|x|)^{\kappa} \quad & {\rm in} \ \;\mathbb{R}^2,
		\end{array}
		\right.
	\end{equation*}
	where $ U_{\alpha}(x)=\log\frac{8(1+\alpha)^2}{(1+|x|^{2(1+\alpha)})^2} $ and $\kappa\in(0,1)$. Then there exists some constant $b_0$ such that
	\begin{equation*}
	\varphi(x)= b_0\frac{1-|x|^{2(1+\alpha)}}{1+|x|^{2(1+\alpha)}}.
	\end{equation*}
\end{lemA}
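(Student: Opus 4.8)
The plan is to decompose $\varphi$ into Fourier modes in the angular variable, reduce the PDE to a family of ODEs, and then carry each ODE to a solvable one-dimensional Schrödinger equation by two changes of variable.

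Write $x=(r\cos\theta,r\sin\theta)$ and $\varphi=\sum_{n\in\mathbb Z}\varphi_n(r)e^{in\theta}$. Away from the origin the coefficient $|x|^{2\alpha}e^{U_\alpha}$ is smooth, so by elliptic regularity $\varphi\in C^\infty(\mathbb R^2\setminus\{0\})$, the expansion is legitimate, and each $\varphi_n$ solves
\[
\varphi_n''+\frac1r\varphi_n'-\frac{n^2}{r^2}\varphi_n+\frac{8(1+\alpha)^2 r^{2\alpha}}{(1+r^{2(1+\alpha)})^2}\varphi_n=0,\qquad r\in(0,\infty),
\]
with $|\varphi_n(r)|\le C(1+r)^\kappa$ inherited from the bound on $\varphi$. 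Since the potential vanishes at $r=0$ and at $r=\infty$, the indicial analysis shows $\varphi_n$ is a combination of $r^{|n|}$ and $r^{-|n|}$ near each end (a $\log$ branch when $n=0$); the $C^2$ (hence boundedness) hypothesis at the origin forces $\varphi_n(r)=O(r^{|n|})$ as $r\to0$. Now substitute $s=r^{1+\alpha}$, $\varphi_n(r)=\psi(s)$: a direct computation turns the mode equation into the radial $\nu$-component of the \emph{regular} ($\alpha=0$) linearized Liouville equation with $\nu:=|n|/(1+\alpha)$, namely $\psi''+\frac1s\psi'-\frac{\nu^2}{s^2}\psi+\frac{8}{(1+s^2)^2}\psi=0$; a further substitution $\xi=\log s$ (using $\partial_s^2+\frac1s\partial_s=\frac1{s^2}\partial_\xi^2$ and $\frac{8s^2}{(1+s^2)^2}=\frac2{\cosh^2\xi}$) puts it in Schrödinger form
\[
-\psi_{\xi\xi}-\frac{2}{\cosh^2\xi}\,\psi=-\nu^2\,\psi,\qquad \xi\in\mathbb R .
\]
Under these substitutions "$\varphi$ bounded near $x=0$'' becomes "$\psi\to0$ as $\xi\to-\infty$'' when $n\ne0$ (and "$\psi$ bounded'' when $n=0$), while $|\varphi|\le(1+|x|)^\kappa$ with $\kappa<1\le|n|$ becomes $\psi(\xi)=O(e^{\kappa\xi/(1+\alpha)})=o(e^{\nu\xi})$ as $\xi\to+\infty$, i.e.\ $\psi$ must also decay at $+\infty$.

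Thus for $n\ne0$ an admissible $\psi$ is an $L^2$ eigenfunction of the Pöschl–Teller operator $-\partial_\xi^2-2\,\mathrm{sech}^2\xi$ with energy $-\nu^2<0$. This $\ell=1$ potential has exactly one bound state, at energy $-1$ with eigenfunction $\mathrm{sech}\,\xi$ (which is $2s/(1+s^2)$, the translation mode); hence a nonzero admissible solution can only occur when $\nu^2=1$. But $\nu=|n|/(1+\alpha)=1$ forces $\alpha=|n|-1$: impossible for $|n|=1$ since $\alpha\in\mathbb R^+$, and impossible for $|n|\ge2$ since then $\alpha\in\mathbb N$, each contradicting $\alpha\in\mathbb R^+\setminus\mathbb N$. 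Therefore $\varphi_n\equiv0$ for every $n\ne0$. For $n=0$ the equation $\psi_{\xi\xi}+2\,\mathrm{sech}^2\xi\,\psi=0$ has the bounded solution $\tanh\xi=(s^2-1)/(s^2+1)$, while the linearly independent solution (by reduction of order) behaves like $\log r$ at $r=0$ and is excluded by the $C^2$ hypothesis; so $\varphi_0(r)=b_0\,\dfrac{1-r^{2(1+\alpha)}}{1+r^{2(1+\alpha)}}$. Summing the modes gives the claimed formula.

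The main obstacle is the spectral statement that the transformed equation $\psi_{\xi\xi}+(2\,\mathrm{sech}^2\xi-\nu^2)\psi=0$ has a bounded solution only at $\nu^2\in\{0,1\}$, with decay at both ends only at $\nu^2=1$. I would prove it either by writing the solutions explicitly — this is a classically solvable potential ($\ell=1$ Pöschl–Teller / associated Legendre), and reduction of order from the explicit solutions $\mathrm{sech}\,\xi$ and $\tanh\xi$ makes the large-$|\xi|$ asymptotics of the second solution transparent, so for $\nu^2\notin\{0,1\}$ the solution decaying at $-\infty$ necessarily grows like $e^{\nu\xi}$ at $+\infty$ — or by directly invoking the corresponding classification in Chen–Lin \cite{chen-lin}. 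A secondary point needing care is the bookkeeping: justifying the mode decomposition and transfer of the growth bound under only $|\varphi|\le(1+|x|)^\kappa$, tracking the regularity and growth conditions through $s=r^{1+\alpha}$ and $\xi=\log s$, and verifying that the exhibited solution $(1-r^{2(1+\alpha)})/(1+r^{2(1+\alpha)})$ is itself $C^2$ at the origin, which holds precisely because $2(1+\alpha)>2$.
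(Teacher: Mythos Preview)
The paper does not supply its own proof of this lemma; it is quoted verbatim from Chen--Lin \cite{chen-lin} and used as a black box. Your argument is a correct and self-contained proof. The Fourier decomposition and the substitution $s=r^{1+\alpha}$ reducing the mode-$n$ equation to the mode-$\nu$ equation of the \emph{regular} linearized Liouville problem with $\nu=|n|/(1+\alpha)$ is exactly the standard device; the further change $\xi=\log s$ to the $\ell=1$ P\"oschl--Teller form is a nice touch that makes the spectral dichotomy transparent. Your handling of the boundary behaviour is sound: boundedness at $r=0$ forces the $r^{|n|}$ branch (via the ODE, not via Taylor expansion of $\varphi$, which would only give $O(r^2)$ from $C^2$), and the growth bound $(1+r)^\kappa$ with $\kappa<1$ translates to $o(e^{\nu\xi})$ since $\kappa/(1+\alpha)<|n|/(1+\alpha)=\nu$, so the solution decaying at $-\infty$ must also decay at $+\infty$, hence be the unique bound state at $\nu=1$; the non-integrality of $\alpha$ then kills every $n\neq 0$. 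For $n=0$ the second solution $\xi\tanh\xi-1$ indeed blows up like $\log r$ at the origin and is excluded by boundedness, even though it would survive the growth bound at infinity.

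One small remark: in the last paragraph you flag the $C^2$ regularity of the radial solution at the origin as holding ``precisely because $2(1+\alpha)>2$''; this is correct but slightly understated---you really only need $\alpha>0$ so that $r^{2(1+\alpha)}$ has continuous second derivatives, and this is already part of the hypothesis.
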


For $\alpha=0$, Chen-Lin proved the following lemma in \cite{chen-lin-sharp}.
\begin{lemA}\label{linear-lem-2}
	Let  $ \varphi $ be a $ C^2 $-function of
	\begin{equation*}
	\left\{\begin{array}{ll}
	\Delta \varphi+e^U\varphi=0\quad & {\rm in} \ \;\mathbb{R}^2,
	\\
	|\varphi| \leq c\big(1+|x|\big)^{\kappa} \quad & {\rm in} \ \;\mathbb{R}^2,
	\end{array}
	\right.
	\end{equation*}
	where $ U(x)=\log\frac{8}{(1+|x|^2)^2} $ and $ \kappa \in[0,1) $. Then there exist constants $b_0$, $b_1$, $b_2$ such that
	\begin{equation*}
		 \varphi= b_0\varphi_0+b_1\varphi_1+b_2\varphi_2,
	\end{equation*}
	where
	\begin{equation*}
	\varphi_0(x)= \frac{1-|x|^2}{1+|x|^2},\quad \varphi_1(x)= \frac{x_1}{1+|x|^2},\quad \varphi_2(x)= \frac{x_2}{1+|x|^2}.
	\end{equation*}
	
\end{lemA}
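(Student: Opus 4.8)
The plan is to separate variables. Writing $x=r(\cos\theta,\sin\theta)$ and expanding $\varphi(r,\theta)=\sum_{n\ge 0}\big(\varphi_n^{c}(r)\cos n\theta+\varphi_n^{s}(r)\sin n\theta\big)$, each coefficient (which I will denote generically by $\varphi_n$) solves
\[
\varphi_n''+\tfrac1r\varphi_n'-\tfrac{n^2}{r^2}\varphi_n+\tfrac{8}{(1+r^2)^2}\varphi_n=0,\qquad r>0 .
\]
By elliptic regularity $\varphi$ is smooth, so each $\varphi_n$ must be the Frobenius solution regular at the origin ($\varphi_n(r)\sim c_n r^{n}$ for $n\ge 1$, bounded for $n=0$), since the companion solution behaves like $r^{-n}$ (resp.\ $\log r$) and is excluded by smoothness at $0$; and the growth hypothesis descends to $|\varphi_n(r)|\le c(1+r)^{\kappa}$ uniformly in $n$ by integrating $|\varphi|$ over $\theta$. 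It therefore suffices to identify, mode by mode, the regular solution: a multiple of $\tfrac{1-r^2}{1+r^2}$ for $n=0$, a multiple of $\tfrac{r}{1+r^2}$ for $n=1$, and $0$ for $n\ge 2$.

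For $n=0$ and $n=1$ I would simply invoke the invariances of the Liouville equation: dilations produce the solution $Z_0(r)=\tfrac{1-r^2}{1+r^2}$ of the $n=0$ mode, and translations (i.e.\ $\partial_{x_i}U$) produce $Z_1(r)=\tfrac{r}{1+r^2}$ of the $n=1$ mode. Each mode's ODE, in self-adjoint form $(r\varphi_n')'+\big(\tfrac{8r}{(1+r^2)^2}-\tfrac{n^2}{r}\big)\varphi_n=0$, has Wronskian $\propto 1/r$, so by reduction of order the companion of $Z_0$ behaves like $\log r$ near the origin and that of $Z_1$ like $r^{-1}$; both are incompatible with smoothness at $0$. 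Hence the $n=0$ contribution is $b_0\varphi_0$ and the $n=1$ contribution is $b_1\varphi_1+b_2\varphi_2$ — the growth hypothesis is not even needed for these two modes.

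The substantive step is $n\ge 2$, where I would show there is no nontrivial regular solution. Suppose $\varphi_n\not\equiv 0$; after a sign change, $\varphi_n(r)=c\,r^{n}(1+o(1))$ near $0$ with $c>0$. I would compare $\varphi_n$ with the everywhere-positive translation solution $Z_1$ through the Wronskian $W:=Z_1\varphi_n'-Z_1'\varphi_n$: the two ODEs give $(rW)'=\tfrac{n^2-1}{r}\,Z_1\varphi_n$, while $rW\to 0$ as $r\to0^{+}$. First, $\varphi_n>0$ on all of $(0,\infty)$: at a hypothetical first positive zero $r_1$ one has $(rW)'>0$ on $(0,r_1)$, so $rW>0$ there, forcing $W(r_1)=Z_1(r_1)\varphi_n'(r_1)>0$ and hence $\varphi_n'(r_1)>0$, contradicting that $\varphi_n$ decreases to $0$ at $r_1$ (this is where $n\ge 2$ enters, via the sign of $n^2-1$, and where the nodelessness of $Z_1$ is used). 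Consequently $rW$ is positive and strictly increasing, so $rW(r)\ge rW(1)=:c_1>0$ for $r\ge 1$; since $(\varphi_n/Z_1)'=W/Z_1^{2}=rW/(rZ_1^{2})$ and $rZ_1^{2}=r^{3}(1+r^{2})^{-2}\le r^{-1}$ for $r\ge 1$, integration yields $\varphi_n/Z_1\gtrsim r^{2}$ and hence $\varphi_n(r)\gtrsim r$ as $r\to\infty$, contradicting $|\varphi_n|\le c(1+r)^{\kappa}$ with $\kappa<1$. Therefore $\varphi_n\equiv 0$ for all $n\ge 2$, and summing over modes gives $\varphi=b_0\varphi_0+b_1\varphi_1+b_2\varphi_2$.

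I expect the $n\ge 2$ case to be the only genuine obstacle, and its delicacy is real: the exponent $\kappa<1$ is sharp, since a spurious mode decaying like $r^{-n}$ at infinity would trivially satisfy the growth bound, so one cannot merely quote a decay estimate — one must actually rule out such an eigenfunction, which is exactly what the Wronskian comparison with $Z_1>0$ accomplishes. (As a consistency check, the lemma is equivalent under stereographic projection to the fact that $\ker(\Delta_{S^2}+2)$ consists of restrictions of linear functions to $S^2$; but the behavior at the point at infinity, where the growth bound does not make the singularity manifestly removable, makes the direct argument on $\mathbb{R}^2$ the cleaner route.)
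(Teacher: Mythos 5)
Your proof is correct. Note first that the paper does not prove this statement at all: it is quoted as Lemma B, an established result of Chen--Lin \cite{chen-lin-sharp}, so there is no in-paper argument to compare against. Your argument is the classical one for nondegeneracy of the Liouville linearization, and every step checks out: the Fourier decomposition with the growth bound inherited by each mode, the Frobenius/indicial analysis at the origin excluding the companions ($\log r$ for $n=0$, $r^{-n}$ for $n\ge 1$), the identification of the regular $n=0$ and $n=1$ solutions via the dilation and translation invariances (indeed $x\cdot\nabla U+2=2\,\frac{1-r^2}{1+r^2}$ and $\partial_{x_i}U=-\frac{4x_i}{1+r^2}$), and the key Wronskian computation $(rW)'=\frac{n^2-1}{r}Z_1\varphi_n$ with $rW\to 0$ at the origin, which first forces positivity of $\varphi_n$ and then the lower bound $\varphi_n\gtrsim r$ via $(\varphi_n/Z_1)'=rW/(rZ_1^2)\ge c_1 r$ for $r\ge 1$, contradicting $\kappa<1$. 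You correctly identify that the only substantive issue is ruling out an $n\ge 2$ mode that is regular at the origin \emph{and} decays at infinity, and your comparison with the nodeless solution $Z_1$ handles exactly that; this is essentially the argument in the literature (the alternative being the stereographic identification with $\ker(\Delta_{S^2}+2)$, which, as you note, requires a removable-singularity step at the north pole that your direct ODE route avoids).
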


\section{The difference between $ u_k^{(1)} $ and $ u_k^{(2)} $}\label{difference}

The way we prove the main theorems is by contradiction. So we assume that $ u_k^{(1)} $ and $ u_k^{(2)} $
are two different sequences of solutions to (\ref{r-equ}) with $\rho_k^{(1)}=\rho_k=\rho_k^{(2)}$, and common blowup points located at $p_1,\cdots,p_m$. For $i=1,2$, we use
the following notations
$$\lambda_{k,j}^{(i)}, u_{k,j}^{(i)}, v_{k,j}^{(i)}, \rho_{k,j}^{(i)}, \bar{u}_k^{(i)}, U_{k,j}^{(i)}, G_{k,j}^{{(i)}}, \psi_{k,j}^{(i)}, \eta_{k,j}^{(i)}, \epsilon_{k,j}^{(i)},  \sigma_k^{(i)}, p_{j}^{(i)}, $$
with obvious interpretations in the context.
Finally the following three functions are defined by the difference of $u_1^k$ and $u_2^k$:
\begin{align}
&	\varsigma_k(x)=\dfrac{u_k^{(1)}(x)-u_k^{(2)}(x)}{\parallel u_k^{(1)}-u_k^{(2)}\parallel_{L^{\infty}(M)}}, \label{varsigma}
\\
&f_k(x)=\rho_k H(x)\frac{e^{u_k^{(1)}(x)}-e^{u_k^{(2)}(x)}}{\parallel u_k^{(1)}-u_k^{(2)}\parallel_{L^{\infty}(M)}}, \label{f}
\\
&c_k(x)=\dfrac{e^{u_k^{(1)}(x)}-e^{u_k^{(2)}(x)}}{u_k^{(1)}(x)-u_k^{(2)}(x)}.  \label{c}
\end{align}
Clearly $\varsigma_k$ satisfies
\begin{equation}\label{sigma-equ}
\Delta_g\varsigma_k(x)+f_k(x)=\Delta_g\varsigma_k(x)+\rho_kH(x)c_k(x)\varsigma_k(x)=0,\quad x\in M.
\end{equation}

As the first step of our proof, we give an initial estimate of $ \parallel u_k^{(1)}-u_k^{(2)}\parallel_{L^{\infty}(M)}$ using $L({\bf p})\neq 0$:
\begin{lem}\label{est1}
Under the assumption of $L(\mathbf{p})\neq 0$, we have
\begin{equation}\label{u-est1}
\parallel u_k^{(1)}-u_k^{(2)}\parallel_{L^{\infty}(M)}=O(e^{-\frac{\gamma}{1+\alpha_1}\lambda_{k,1}^{(1)}}).
\end{equation}

\end{lem}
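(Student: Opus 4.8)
The plan is to argue by contradiction. Suppose \eqref{u-est1} fails along a subsequence; then the normalized difference $\varsigma_k$ defined in \eqref{varsigma} satisfies $\|\varsigma_k\|_{L^\infty(M)}=1$ and solves the linear equation \eqref{sigma-equ}, with coefficient $\rho_k H c_k$, where $c_k$ lies between $e^{u_k^{(1)}}$ and $e^{u_k^{(2)}}$ pointwise. The first step is to study the behavior of $\varsigma_k$ away from the blowup set and near each blowup point. Away from $\bigcup_j B(p_j,r_0)$, the coefficient $\rho_k H e^{u_k^{(i)}}\to 0$ exponentially fast (by \eqref{standard-bubble} and the bubble profile \eqref{U_kj}), so $\varsigma_k$ is asymptotically harmonic there; combined with the Green's representation one gets that $\varsigma_k$ away from the bubbles is controlled by its values on the circles $\partial B(p_{k,j},r_0)$ up to negligible error, and in fact is governed by a finite-dimensional set of data — essentially constants $c_{k,j}$ representing the limiting boundary value near each $p_j$.

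Near a fixed blowup point $p_j$, I would rescale: set $\varsigma_{k,j}(y)=\varsigma_k(p_{k,j}+\delta_{k,j} y)$ with $\delta_{k,j}=e^{-\lambda_{k,j}/(2(1+\alpha_j))}$, the natural bubbling scale. Using the sharp approximation $u_k^{(i)}\approx U_{k,j}^{(i)}$ from \eqref{standard-bubble}–\eqref{eta2}, the rescaled equation converges to the linearized Liouville equation $\Delta\varphi+|y|^{2\alpha_j}e^{U_{\alpha_j}}\varphi=0$ (or the $\alpha_j=0$ version), and $\varsigma_{k,j}$ is bounded, so by Lemma~\ref{linear-lem-1} (resp. Lemma~\ref{linear-lem-2}) the limit is $b_{0,j}\frac{1-|y|^{2(1+\alpha_j)}}{1+|y|^{2(1+\alpha_j)}}$ (plus translation modes $b_{1,j}\varphi_1+b_{2,j}\varphi_2$ when $\alpha_j=0$). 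The translation modes, when present, are pinned down to be negligible by the location estimate \eqref{p_kj-location} together with $D(\mathbf p)=0$ and \eqref{first-deriv-est}; so the essential content is the single constant $b_{0,j}$ per bubble. The key point is that $b_{0,j}$ is exactly the coefficient that, through the Pohozaev-type / mass-expansion identity behind Theorem~\ref{Theorem chen-lin2}, measures the first-order difference $\rho_{k,j}^{(1)}-\rho_{k,j}^{(2)}$ of the local masses. Summing over $j$ and using $\rho_k^{(1)}=\rho_k^{(2)}$ forces a linear relation; and here the hypothesis $L(\mathbf p)\neq 0$ enters — it guarantees the relevant coefficient (the analogue of $L^\ast$ in \eqref{rho-k}, built from the $(h_j(p_j))^{1/(1+\alpha_1)}e^{G_j^\ast(p_j)/(1+\alpha_1)}$ weights) is nonzero, which together with the sign/structure of these weights forces all $b_{0,j}=0$.

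Once all the coefficients $b_{0,j}$ (and the translation coefficients) vanish, $\varsigma_k\to 0$ in $C^2_{loc}$ near each $p_j$ and also away from the blowup set (the boundary data go to zero), contradicting $\|\varsigma_k\|_{L^\infty(M)}=1$ — unless the $L^\infty$ norm is attained in an intermediate annular region. To close that gap one reruns the Green's representation: $\varsigma_k(x)=\int_M G(x,\cdot)\,f_k\,{\rm d}\mu$ (after subtracting the average, which one shows is negligible), split the integral into the bubble disks and the exterior, and use the decay of $f_k$ there plus the established smallness of $\varsigma_k$ on the disks to conclude $\|\varsigma_k\|_{L^\infty}\to 0$. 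The main obstacle, and the place requiring the most care, is the quantitative bookkeeping in this last step: extracting the precise first-order expansion of the $b_{0,j}$ contributions to $\rho_{k,j}^{(1)}-\rho_{k,j}^{(2)}$ with error terms sharp enough to detect the $O(e^{-\gamma\lambda_{k,1}^{(1)}/(1+\alpha_1)})$ threshold, and propagating the resulting $C^0$ bound uniformly across all of $M$ rather than just locally. This is where the refined estimates \eqref{zhang}, \eqref{psi-est}, \eqref{sigma2}, \eqref{uk-ave-1} and the identity \eqref{rho-kj-1} must be combined in an iterative fashion, exactly as flagged in the introduction.
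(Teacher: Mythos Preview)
Your proposal misreads what Lemma~\ref{est1} is for and how it is proved. The paper does \emph{not} argue by contradiction or invoke the linearized limit at all for this lemma; that machinery (normalized $\varsigma_k$, limits in the kernel, Pohozaev identities, showing $b_{0,j}=0$) is exactly what the paper develops \emph{afterwards} in Lemmas~\ref{lem-limit-1}--\ref{lem-C1-est} and Section~\ref{anal-pohozaev} to prove the main theorems, and those later arguments actually \emph{use} Lemma~\ref{est1} as input (through \eqref{c-est}). So your plan is, at best, circular.

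The paper's proof is a short direct computation. The decisive step is: apply the expansion \eqref{rho-k},
\[
\rho_k-\rho_*=L^*e^{-\frac{\lambda_{k,1}^{(i)}}{1+\alpha_1}}+O\big(e^{-\frac{1+\gamma}{1+\alpha_1}\lambda_{k,1}^{(i)}}\big),\qquad i=1,2,
\]
to each sequence. Since $\rho_k^{(1)}=\rho_k^{(2)}$ and $L(\mathbf p)\neq 0$ forces $L^*\neq 0$, subtracting yields $e^{-\lambda_{k,1}^{(1)}/(1+\alpha_1)}=e^{-\lambda_{k,1}^{(2)}/(1+\alpha_1)}\big(1+O(e^{-\gamma\lambda_{k,1}/(1+\alpha_1)})\big)$, hence
\[
\lambda_{k,1}^{(1)}-\lambda_{k,1}^{(2)}=O\big(e^{-\frac{\gamma}{1+\alpha_1}\lambda_{k,1}^{(1)}}\big),
\]
and this propagates to all $j$ via \eqref{uk-ave-2}. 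With the heights matched at this rate, one simply writes $u_k^{(i)}=U_{k,j}^{(i)}+\eta_{k,j}^{(i)}+G_{k,j}^{(i)}-G_{k,j}^{(i)}(p_{k,j}^{(i)})$ inside each $B(p_{k,j},r_0)$, subtracts, and reads off \eqref{u-est1} there from the explicit form of $U_{k,j}^{(i)}$ together with \eqref{eta2}, \eqref{eta3}, \eqref{p_kj-location}. Outside the balls, the Green's representation of $u_k^{(1)}-u_k^{(2)}-(\bar u_k^{(1)}-\bar u_k^{(2)})$ is split into three pieces $I_1+I_2+I_3$ and estimated by scaling; the averages are then removed using \eqref{uk-ave-1}. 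No kernel analysis, no $b_{0,j}$, no Pohozaev identity.

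Note also a structural issue with your contradiction scheme: to get the rescaled coefficient $\rho_kH c_k$ to converge to the standard linearized potential you already need $\lambda_{k,j}^{(1)}-\lambda_{k,j}^{(2)}\to 0$, and the only way to see that from the hypotheses is precisely the height comparison via \eqref{rho-k} above. Once you have made that observation, the quantitative $O(e^{-\gamma\lambda_{k,1}^{(1)}/(1+\alpha_1)})$ bound on the heights is immediate from the error term in \eqref{rho-k}, and the remainder of the lemma follows directly; the whole linearization detour becomes unnecessary.
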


\begin{proof}[\textbf{Proof}]
		
\textbf{Step 1.}
For $x\in B(p_{k,j},r_0), 1\leq j\leq m$, by (\ref{eta1}) (\ref{eta2}) (\ref{loc-mass}) (\ref{G_kj}) and Theorem \ref{Theorem chen-lin2}, we have
\begin{align*}
&u_k^{(1)}(x)-u_k^{(2)}(x)\\
=&\,U_{k,j}^{(1)}(x)-U_{k,j}^{(2)}(x)+\eta_{k,j}^{(1)}(x)-\eta_{k,j}^{(2)}(x)+G_{k,j}^{(1)}(x)-G_{k,j}^{(2)}(x)\\
&\,+G_{k,j}^{(1)}(p_{k,j}^{(1)})-G_{k,j}^{(2)}(p_{k,j}^{(2)})\\
=&\,\lambda_{k,j}^{(1)}-\lambda_{k,j}^{(2)}-2\log\Big(1+\frac{\rho_k h_j(p_{k,j}^{(1)})}{8(1+\alpha_j)^2}e^{\lambda_{k,j}^{(1)}}\big|x-p_{k,j}^{(1)}\big|^{2(1+\alpha_j)}\Big)\\
&\,+2\log\Big(1+\frac{\rho_k h_j(p_{k,j}^{(2)})}{8(1+\alpha_j)^2}e^{\lambda_{k,j}^{(2)}}\big|x-p_{k,j}^{(2)}\big|^{2(1+\alpha_j)}\Big)+O\Big(\sum_{i=1}^{2}e^{-\frac{\lambda_{k,1}^{(i)}}{2(1+\alpha_1)}}\Big).
\end{align*}
Theorem \ref{Theorem chen-lin2} and $L(\mathbf{p})\neq 0$ give rise to
\begin{equation*}
e^{-\frac{1}{1+\alpha_1}(\lambda_{k,1}^{(1)}-\lambda_{k,1}^{(2)})}=1+O\Big(\sum_{i=1}^{2}e^{-\frac{\gamma}{1+\alpha_1}\lambda_{k,1}^{(i)}}\Big),
\end{equation*}
which immediately implies
\begin{equation}\label{lam-est-1}
\lambda_{k,1}^{(1)}-\lambda_{k,1}^{(2)}=O\Big(\sum_{i=1}^{2}e^{-\frac{\gamma}{1+\alpha_1}\lambda_{k,1}^{(i)}}\Big).
\end{equation}
Then by (\ref{lam-est-1}) and (\ref{uk-ave-2}), what holds for one point is also true at other blowup points:
\begin{equation}\label{lam-est-2}
\lambda_{k,j}^{(1)}-\lambda_{k,j}^{(2)}=O\Big(\sum_{i=1}^{2}e^{-\frac{\gamma}{1+\alpha_1}\lambda_{k,1}^{(i)}}\Big),\quad 1\leq j\leq m.
\end{equation}

On the other hand, using (\ref{p_kj-location}) in direct computation, we have,
\begin{align*}
&\log\Big(1+\frac{\rho_k h_j(p_{k,j}^{(1)})}{8(1+\alpha_j)^2}e^{\lambda_{k,j}^{(1)}}\big|x-p_{k,j}^{(1)}\big|^{2(1+\alpha_j)}\Big)-\log\Big(1+\frac{\rho_k h_j(p_{k,j}^{(2)})}{8(1+\alpha_j)^2}e^{\lambda_{k,j}^{(2)}}\big|x-p_{k,j}^{(2)}\big|^{2(1+\alpha_j)}\Big)\\
&=O(\lambda_{k,j}^{(1)}-\lambda_{k,j}^{(2)})
\end{align*}
Thus $u_k^{(1)}$ and $u_k^{(2)}$ are close in the interior of the ball $B(p_{k,j}^{(1)},r_0)$:
\begin{equation}\label{est-1}
\parallel u_k^{(1)}-u_k^{(2)}\parallel_{L^{\infty}(B(p_{k,j}^{(1)},r_0))}=O\Big(\sum_{i=1}^{2}e^{-\frac{\gamma}{1+\alpha_1}\lambda_{k,1}^{(i)}}\Big)=O(e^{-\frac{\gamma}{1+\alpha_1}\lambda_{k,1}^{(1)}}).
\end{equation}

\textbf{Step 2.}
For $x\in M\backslash \bigcup_{j=1}^mB(p_{k,j}^{(1)},r_0)$, we first use the Green's representation formula to write $u_k^{(1)}-u_k^{(2)}-\big(\bar{u}_k^{(1)}-\bar{u}_k^{(2)}\big)$ in three parts:

\begin{equation}
\begin{split}
 & u_k^{(1)}-u_k^{(2)}-\big(\bar{u}_k^{(1)}-\bar{u}_k^{(2)}\big) \notag\\
=& \int_{M} G(y,x)\rho_kH(y)(e^{u_k^{(1)}(y)}-e^{u_k^{(2)}(y)}){\rm d}\mu(y) \\
=& \sum_{j=1}^{m}\int_{B(p_{k,j}^{(1)},\frac{r_0}{2})} \big(G(y,x)-G(p_{k,j}^{(1)},x)\big)\rho_kH(y)(e^{u_k^{(1)}(y)}-e^{u_k^{(2)}(y)}){\rm d}\mu(y) \\
&+ \sum_{j=1}^{m}G(p_{k,j}^{(1)},x)\int_{B(p_{k,j}^{(1)},\frac{r_0}{2})}\rho_kH(y)(e^{u_k^{(1)}(y)}-e^{u_k^{(2)}(y)}){\rm d}\mu(y)  \\
&+ \int_{M\backslash \bigcup_{j=1}^mB(p_{k,j}^{(1)},\frac{r_0}{2})} G(y,x)\rho_kH(y)(e^{u_k^{(1)}(y)}-e^{u_k^{(2)}(y)}){\rm d}\mu(y) \\
=&\mathrel{\mathop:}I_1+I_2+I_3.
\end{split}
\end{equation}
Before we evaluate each one of them we recall a few facts: First
\begin{equation*}
	p_{k,j}^{(1)}-p_{k,j}^{(2)}=\left\{\begin{array}{ll}0, \quad \mbox{for}\  1\le j\le \tau, \\
	O(\sum_{i=1}^2\lambda_{k,j}^{(i)}e^{-\lambda_{k,j}^{(i)}}) \quad \mbox{if}\  j>\tau \ \mbox{ (see (\ref{p_kj-location}))}.
	\end{array}
	\right.
\end{equation*}
Next for $x\in M\backslash \bigcup_{j=1}^mB(p_{k,j}^{(1)},r_0)$, $ y\in B(p_{k,j}^{(1)},\frac{r_0}{2}) $,
\begin{equation*}
	G(y,x)-G(p_{k,j}^{(1)},x)=\langle\partial_yG(y,x)\big|_{y-p_{k,j}^{(1)}},y-p_{k,j}^{(1)}\rangle+O(|y-p_{k,j}^{(1)}|^2)
\end{equation*}
Then using symmetry, scaling, and the closeness between $u_k^{(i)}$ with standard bubbles, we have

\begin{align*}
&I_1
=\sum_{j=1}^{m}\sum_{i=1}^2\int_{B(p_{k,j}^{(i)},\frac{r_0}{2})}\frac{\langle\partial_yG(y,x)\big|_{y=p_{k,j}^{(i)}},y-p_{k,j}^{(i)}\rangle\rho_k\tilde{h}_j(y)|y-p_{k,j}^{(i)}|^{2\alpha_j}}{\big(1+\frac{\rho_kh_j(p_{k,j}^{(i)})}{8(1+\alpha_j)^2}e^{\lambda_{k,j}^{(i)}}|y-p_{k,j}^{(1)}|^{2(1+\alpha_j)}\big)^2}\\
&\times \Big(1+O(|y-p_{k,j}^{(i)}|)+O(e^{-\frac{\lambda_{k,j}^{(i)}}{2(1+\alpha_j)}})+O(e^{-\frac{\lambda_{k,j}^{(i)}}{1+\alpha_1}})\Big){\rm d}y+
 O\Big(\sum_{i=1}^{2}e^{-\frac{\lambda_{k,1}^{(i)}}{1+\alpha_1}}\Big),\\
 &=O\Big(\sum_{i=1}^{2}e^{-\frac{\lambda_{k,1}^{(i)}}{1+\alpha_1}}\Big).
 \end{align*}

The closeness between $\rho_{k,j}^{(1)}$ and $\rho_{k,j}^{(2)}$ leads to the smallness of $I_2$ (see
 (\ref{loc-mass}) (\ref{rho-kj-1}) and (\ref{rho-kj-2})):
\begin{equation}\label{I2}
I_2=\sum_{j=1}^{m}G(p_{k,j}^{(1)},x)(\rho_{k,j}^{(1)}-\rho_{k,j}^{(2)})=O\Big(\sum_{i=1}^{2}e^{-\frac{\lambda_{k,1}^{(i)}}{1+\alpha_1}}\Big).
\end{equation}
For $I_3$, the magnitude of $u_k^{(i)}$ outside the bubbling area determines the smallness of $I_3$:
$$
I_3=\rho_k\int_{M\backslash \bigcup_{j=1}^mB(p_{k,j}^{(1)},\frac{r_0}{2})} G(y,x)H(y)(e^{u_k^{(1)}(y)}-e^{u_k^{(2)}(y)}){\rm d}\mu(y)=O\Big(\sum_{i=1}^{2}e^{-\lambda_{k,1}^{(i)}}\Big).
$$
Therefore
\begin{equation}\label{step2-2}
 u_k^{(1)}-u_k^{(2)}-\big(\bar{u}_k^{(1)}-\bar{u}_k^{(2)}\big)=O\Big(\sum_{i=1}^{2}e^{-\frac{\lambda_{k,1}^{(i)}}{1+\alpha_1}}\Big)\quad \mbox{in}\quad
 M\backslash \bigcup_{j=1}^mB(p_{k,j}^{(1)},r_0).
\end{equation}
To eliminate the averages in (\ref{step2-2}) we take advantage of (\ref{uk-ave-1}) and (\ref{lam-est-1}):
\begin{equation}\label{step2-3}
\bar{u}_k^{(1)}-\bar{u}_k^{(2)}=-(\lambda_{k,j}^{(1)}-\lambda_{k,j}^{(2)})+O\Big(\sum_{i=1}^{2}\lambda_{k,j}^{(i)} e^{-\frac{\lambda_{k,1}^{(i)}}{1+\alpha_1}}\Big)=O\Big(\sum_{i=1}^{2}e^{-\frac{\gamma}{1+\alpha_1}\lambda_{k,1}^{(i)}}\Big),
\end{equation}
Using (\ref{step2-3}) in (\ref{step2-2}) we arrive at
\begin{equation}\label{step2-4}
u_k^{(1)}(x)-u_k^{(2)}(x)=O\Big(\sum_{i=1}^{2}e^{-\frac{\gamma}{1+\alpha_1}\lambda_{k,1}^{(i)}}\Big)=O(e^{-\frac{\gamma}{1+\alpha_1}\lambda_{k,1}^{(1)}}).
\end{equation}
for all $x\in M\backslash \bigcup_{j=1}^mB(p_{k,j}^{(1)},r_0)$.
 Lemma \ref{est1} is established.

\end{proof}

As an immediate application, Lemma \ref{est1} gives ( see (\ref{c}) )
\begin{equation}\label{c-est}
	c_k(x)=e^{u_k^{(1)}(x)}\big(1+O(\parallel u_k^{(1)}-u_k^{(2)}\parallel_{L^{\infty}(M)})\big)=e^{u_k^{(1)}(x)}\big(1+O(e^{-\frac{\gamma}{1+\alpha_1}\lambda_{k,1}^{(1)}})\big).
\end{equation}

To simply the notations, we set
\begin{equation}
	\epsilon_{k,j}=\bigg(\frac{\rho_kh_j(p_{k,j}^{(1)})}{8(1+\alpha_j)^2}\bigg)^{-\frac{1}{2(1+\alpha_j)}}e^{-\frac{\lambda_{k,j}^{(1)}}{2(1+\alpha_j)}}.
\end{equation}
and
\begin{equation}\label{varsigma-kj}
\varsigma_{k,j}(z)=\varsigma_k(\epsilon_{k,j}z+p_{k,j}^{(1)}),\quad |z|<\frac{r_0}{\epsilon_{k,j}},\quad 1\leq j\leq m,
\end{equation}
which satisfies
\begin{equation}\label{sigma-t-equ}
\Delta\varsigma_{k,j}+\frac{8(1+\alpha_j)^2}{\rho_kh_j(p_{k,j}^{(1)})}\rho_k\tilde{h}_j(\epsilon_{k,j}z+p_{k,j}^{(1)})e^{-\lambda_{k,j}^{(1)}}|z|^{2\alpha_j}c_k(\epsilon_{k,j}z+p_{k,j}^{(1)})\varsigma_{k,j}=0.
\end{equation}
for $ |z|<r_0\epsilon_{k,j}^{-1}$.

\medskip

The following lemma determines the limit of $\varsigma_{k,j}$ in both situations:

\begin{lem}\label{lem-limit-1}
{\rm( The limit of} $\varsigma_{k,j}$ {\rm )}

{\rm (i)} For $1\leq j \leq \tau$,
\begin{equation*}
\varsigma_{k,j}\rightarrow b_{j,0}\varphi_{j,0}\quad {\rm in}\ \; C_{loc}(\mathbb{R}^2).
\end{equation*}
where $\varphi_{j,0}$ is
\begin{equation*}
\varphi_{j,0}(z)=\frac{1-|z|^{2(1+\alpha_j)}}{1+|z|^{2(1+\alpha_j)}}.
\end{equation*}

{\rm (ii)} If $\tau<m$ and $\tau+1\leq j \leq m$, there exist constants $ b_{j,0} $, $ b_{j,1} $ and $ b_{j,2} $ such that
\begin{equation*}
\varsigma_{k,j}\rightarrow  b_{j,0}\varphi_{j,0}+b_{j,1}\varphi_{j,1}+b_{j,2}\varphi_{j,2}\quad {\rm in}\ \; C_{loc}(\mathbb{R}^2),
\end{equation*}
where $\varphi_{j,i}$ are
\begin{equation*}
\varphi_{j,0}(z)=\frac{1-|z|^2}{1+|z|^2}, \quad
\varphi_{j,1}(z)=\frac{z_1}{1+|z|^2}, \quad
\varphi_{j,2}(z)=\frac{z_2}{1+|z|^2}.
\end{equation*}
\end{lem}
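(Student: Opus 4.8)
The plan is to follow the standard blow-up-and-classify strategy: rescale the difference quotient $\varsigma_k$ around each blow-up point $p_{k,j}^{(1)}$ at the natural scale $\epsilon_{k,j}$, show the rescaled functions $\varsigma_{k,j}$ converge in $C^2_{loc}$ to a bounded solution of a linearized Liouville equation, and then invoke the classification of the kernel (Lemma A \ref{linear-lem-1} for non-integer $\alpha_j$ and Lemma A \ref{linear-lem-2} for $\alpha_j=0$). First I would note that $\|\varsigma_k\|_{L^\infty(M)}=1$ by definition, so $|\varsigma_{k,j}|\le 1$ on its domain $|z|<r_0\epsilon_{k,j}^{-1}$, which exhausts $\mathbb{R}^2$ as $k\to\infty$. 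The equation (\ref{sigma-t-equ}) for $\varsigma_{k,j}$ has coefficient
\[
\frac{8(1+\alpha_j)^2}{\rho_kh_j(p_{k,j}^{(1)})}\,\rho_k\tilde h_j(\epsilon_{k,j}z+p_{k,j}^{(1)})\,e^{-\lambda_{k,j}^{(1)}}|z|^{2\alpha_j}\,c_k(\epsilon_{k,j}z+p_{k,j}^{(1)}),
\]
and the point is to identify its local uniform limit. Using (\ref{c-est}), $c_k(\epsilon_{k,j}z+p_{k,j}^{(1)})=e^{u_k^{(1)}(\epsilon_{k,j}z+p_{k,j}^{(1)})}(1+o(1))$, and by the closeness to the standard bubble (\ref{standard-bubble})--(\ref{U_kj}) together with the definition of $\epsilon_{k,j}$, one computes $e^{-\lambda_{k,j}^{(1)}}e^{u_k^{(1)}(\epsilon_{k,j}z+p_{k,j}^{(1)})}|z|^{2\alpha_j}\tilde h_j(\epsilon_{k,j}z+p_{k,j}^{(1)})\cdot\frac{8(1+\alpha_j)^2}{h_j(p_{k,j}^{(1)})}\to |z|^{2\alpha_j}e^{U_{\alpha_j}(z)}$ where $U_{\alpha_j}(z)=\log\frac{8(1+\alpha_j)^2}{(1+|z|^{2(1+\alpha_j)})^2}$ (using $\tilde h_j=h_je^{\phi_j}$ with $\phi_j(0)=0$, and continuity of $h_j$). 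Hence the coefficient converges in $C^0_{loc}$ to $|z|^{2\alpha_j}e^{U_{\alpha_j}}$.

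Next I would establish compactness. Since $|\varsigma_{k,j}|\le 1$ and the coefficient in (\ref{sigma-t-equ}) is locally bounded in $L^p$ for any $p<\infty$ (the only possible singularity $|z|^{2\alpha_j}$ is integrable to any power on bounded sets when $\alpha_j>-1$, and here $\alpha_j\ge 0$), elliptic $W^{2,p}$ estimates give a uniform $W^{2,p}_{loc}$ bound, hence $C^{1,\beta}_{loc}$ bounds, and a subsequence $\varsigma_{k,j}\to\varsigma_j$ in $C^1_{loc}(\mathbb{R}^2)$ (and in $C^2_{loc}$ away from the origin). The limit satisfies
\[
\Delta\varsigma_j+|z|^{2\alpha_j}e^{U_{\alpha_j}}\varsigma_j=0\quad\text{in }\mathbb{R}^2,\qquad |\varsigma_j|\le 1,
\]
the second inequality being the required growth bound with $\kappa=0\in[0,1)$. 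For $1\le j\le\tau$ (so $\alpha_j\in\mathbb{R}^+\setminus\mathbb{N}$ under the hypothesis of Theorem \ref{main-theorem}, or $0<\alpha_j<1$ under Theorem \ref{main-theorem-2}; the case $\alpha_j=0$ does not occur for $j\le\tau$ since $p_j=q_j$ is a singular source) Lemma A \ref{linear-lem-1} forces $\varsigma_j=b_{j,0}\varphi_{j,0}$ with $\varphi_{j,0}(z)=\frac{1-|z|^{2(1+\alpha_j)}}{1+|z|^{2(1+\alpha_j)}}$, which is (i). For $\tau+1\le j\le m$ we have $\alpha_j=0$, so $U_{\alpha_j}=U$ and Lemma A \ref{linear-lem-2} gives $\varsigma_j=b_{j,0}\varphi_{j,0}+b_{j,1}\varphi_{j,1}+b_{j,2}\varphi_{j,2}$, which is (ii). Since the limit is uniquely determined (the classification pins it down regardless of subsequence up to the coefficients), the full sequence converges.

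The main obstacle is the precise justification that the coefficient of (\ref{sigma-t-equ}) converges to $|z|^{2\alpha_j}e^{U_{\alpha_j}}$ \emph{uniformly on compact sets including a neighborhood of the origin}, because near $z=0$ the factor $|z|^{2\alpha_j}$ degenerates and one must control $e^{-\lambda_{k,j}^{(1)}}e^{u_k^{(1)}}$ there using only the $O(1)$ bound (\ref{standard-bubble}) rather than a pointwise asymptotic. The way around this is that $|z|^{2\alpha_j}$ is a \emph{fixed} weight independent of $k$, so it suffices to have $e^{-\lambda_{k,j}^{(1)}}e^{u_k^{(1)}(\epsilon_{k,j}z+p_{k,j}^{(1)})}\tilde h_j(\epsilon_{k,j}z+p_{k,j}^{(1)})\to \frac{h_j(p_{k,j}^{(1)})}{8(1+\alpha_j)^2}\cdot\frac{8(1+\alpha_j)^2}{(1+|z|^{2(1+\alpha_j)})^2}$ in $C^0_{loc}$; this follows from the refined estimate $\eta_{k,j}\to 0$ (see (\ref{eta2}), (\ref{eta3})) together with (\ref{eta1}) and (\ref{U_kj}), after absorbing the smooth factor $G_{k,j}-G_{k,j}(p_{k,j})=O(\epsilon_{k,j}|z|)$. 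A secondary technical point is handling $\alpha_j\ge 1$ in Theorem \ref{main-theorem} (allowed there, and then $|z|^{2\alpha_j}$ is actually $C^1$, making the elliptic regularity even easier) versus $0<\alpha_j<1$ in Theorem \ref{main-theorem-2}; in either regime the $W^{2,p}_{loc}$ machinery applies, so no separate argument is needed. Once $C^1_{loc}$ compactness and the limiting equation are in hand, the conclusion is immediate from the two cited kernel lemmas.
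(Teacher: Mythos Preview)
Your proposal is correct and follows essentially the same strategy as the paper's own proof: identify the $C^0_{loc}$ limit of the coefficient in (\ref{sigma-t-equ}) via (\ref{c-est}) and the bubble expansion (\ref{eta1})--(\ref{eta3}), obtain compactness from the uniform bound $|\varsigma_{k,j}|\le 1$ and elliptic estimates, pass to the limit, and invoke the kernel classification Lemmas~\ref{linear-lem-1} and~\ref{linear-lem-2}. One minor caveat: your last sentence overstates things---the classification fixes only the \emph{form} of the limit, not the coefficients $b_{j,i}$, so different subsequences could in principle yield different constants; the paper (like most arguments of this type) tacitly works along a subsequence here, and the coefficients are later shown to vanish anyway.
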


\begin{proof}[\textbf{Proof}]

(i) For $1\leq j \leq \tau$, it is easy to use (\ref{c-est}) (\ref{eta1}) (\ref{eta2}) and (\ref{eta3}) to obtain
\begin{align*}
&\frac{8(1+\alpha_j)^2}{\rho_kh_j(p_{k,j}^{(1)})}\rho_k\tilde{h}_j(\epsilon_{k,j}z+p_{k,j}^{(1)})e^{-\lambda_{k,j}^{(1)}}c_k(\epsilon_{k,j}z+p_{k,j}^{(1)})  \\
=& \frac{8(1+\alpha_j)^2}{\rho_kh_j(p_{k,j}^{(1)})}\rho_k\tilde{h}_j(\epsilon_{k,j}z+p_{k,j}^{(1)})e^{-\lambda_{k,j}^{(1)}}e^{U_{k,j}^{(1)}+G_{k,j}^{(1)}(\epsilon_{k,j}z+p_{k,j}^{(1)})-G_{k,j}^{(1)}(p_{k,j}^{(1)}) }\big(1+o(1)\big)\\
=& \frac{8(1+\alpha_j)^2}{\big(1+|z|^{2(1+\alpha_j)}\big)^2}\big(1+O(\epsilon_{k,j}|z|)+o(1)\big)
\to \frac{8(1+\alpha_j)^2}{\big(1+|z|^{2(1+\alpha_j)}\big)^2} \quad {\rm in}\ \; C_{loc}(\mathbb{R}^2).
\end{align*}
Therefore, $\varsigma_{k,j}\rightarrow\varsigma_j$ in $C_{loc}(\mathbb{R}^2)$ and $\varsigma_j$ satisfies
\begin{equation}\label{limit-equ1}
\Delta\varsigma_j(z)+\frac{8(1+\alpha_j)^2|z|^{2\alpha_j}}{\big(1+|z|^{2(1+\alpha_j)}\big)^2}\varsigma_j(z)=0 \quad {\rm in} \ \; \mathbb{R}^2.
\end{equation}

Since it is obvious to have $|\varsigma_j|\leq 1$ from $|\varsigma_{k,j}|\leq 1$, we apply Lemma \ref{linear-lem-1} to have $\varsigma_j=b_{j,0}\varphi_{j,0}$ for some constant $b_{j,0}$ and
\begin{equation*}
\varphi_{j,0}(z)=\frac{1-|z|^{2(1+\alpha_j)}}{1+|z|^{2(1+\alpha_j)}}.
\end{equation*}
 That is $\varsigma_{k,j}\rightarrow b_{j,0}\varphi_{j,0}$ in $ C_{loc}(\mathbb{R}^2)$.

\medskip

(ii) For $\tau+1\leq j \leq m$,  by (\ref{c-est}) (\ref{eta1}) and (\ref{eta3}), we have $\varsigma_{k,j}\rightarrow\varsigma_j$ in $C_{loc}(\mathbb{R}^2)$, where
\begin{equation*}
\left\{\begin{array}{lcl}
\Delta\varsigma_j(z)+\frac{8}{(1+|z|^{2})^2}\varsigma_j(z)=0 && {\rm in}\ \; \mathbb{R}^2,
\\
|\varsigma_j|\leq 1  &&{\rm in}\ \; \mathbb{R}^2.
\end{array}
\right.
\end{equation*}
In this case we use Lemma \ref{linear-lem-2} to conclude that
\begin{equation*}
\varsigma_j(z)=b_{j,0}\varphi_{j,0}(z)+b_{j,1}\varphi_{j,1}(z)+b_{j,2}\varphi_{j,2}(z),
\end{equation*}
for some constants $ b_{j,0} $, $ b_{j,1} $ and $ b_{j,2} $. Lemma \ref{lem-limit-1} is established.

\end{proof}

Our next goal is to prove that all $b_{j,0}$ are the same, and equal to the limit of $\varsigma_k$ away from the bubbling area. Our approach is similar to the corresponding parts in
\cite{lin-yan-uniq} for the Chern-Simons-Higgs equation and in \cite{bart-4} for regular mean field equations.

\begin{lem}\label{lem-limit-2}
	
There exists a constant $b_0$ such that
\begin{equation*}
\varsigma_k\rightarrow -b_0\quad {\rm in}\ \; C_{loc}(M\backslash\{p_1,\cdots,p_m\}).
\end{equation*}	
Moreover, $b_{j,0}=b_0$ for all $1 \leq j \leq m$.
\end{lem}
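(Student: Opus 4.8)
The plan is to combine a removable-singularity argument on $M\setminus\{p_1,\dots,p_m\}$ with a local matching argument at each bubble.

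\textbf{Step 1 (the limit away from the bubbles).} On any compact subset of $M\setminus\{p_1,\dots,p_m\}$ the coefficient in \eqref{sigma-equ} is small: by \eqref{c-est} and the standard exponential decay of $u_k^{(1)}$ outside the bubbling discs, $\rho_kHc_k=\rho_kHe^{u_k^{(1)}}(1+o(1))\to 0$ uniformly there. Since $\|\varsigma_k\|_{L^\infty(M)}=1$, elliptic estimates applied to \eqref{sigma-equ} give uniform $C^2_{loc}(M\setminus\{p_1,\dots,p_m\})$ bounds, so along a subsequence $\varsigma_k\to\varsigma_*$ in $C^2_{loc}$ with $\Delta_g\varsigma_*=0$ on $M\setminus\{p_1,\dots,p_m\}$ and $|\varsigma_*|\le 1$. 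A bounded harmonic function extends across an isolated point singularity, hence $\varsigma_*$ is harmonic on the closed surface $M$ and therefore $\varsigma_*\equiv -b_0$ for some constant $b_0$. Moreover, since the blowup points form a null set and $|\varsigma_k|\le 1$, dominated convergence gives $\bar\varsigma_k\to -b_0$ as well.

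\textbf{Step 2 (matching $b_{j,0}$ with $b_0$).} Fix $j$ and work in the coordinates of \eqref{phi-equ} near $p_j$, in which \eqref{sigma-equ} reads $\Delta\varsigma_k+\rho_k e^{\phi_j}Hc_k\varsigma_k=0$ in $B(p_{k,j}^{(1)},r_0)$ (recall $p_{k,j}^{(1)}=p_j$ for $j\le\tau$). With $G_{r_0}$ the Dirichlet Green's function of this ball and $\mathcal H_k$ the harmonic extension of $\varsigma_k|_{\partial B(p_{k,j}^{(1)},r_0)}$, I would use the representation
\begin{equation*}
\varsigma_k(x)=\mathcal H_k(x)+\int_{B(p_{k,j}^{(1)},r_0)}G_{r_0}(x,y)\,\rho_k e^{\phi_j(y)}H(y)c_k(y)\varsigma_k(y)\,dy
\end{equation*}
and evaluate at $x=p_{k,j}^{(1)}$. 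By \eqref{varsigma-kj} and Lemma \ref{lem-limit-1}, $\varsigma_k(p_{k,j}^{(1)})=\varsigma_{k,j}(0)\to b_{j,0}$ (since $\varphi_{j,0}(0)=1$, and $\varphi_{j,1}(0)=\varphi_{j,2}(0)=0$ when $j>\tau$), while $\mathcal H_k(p_{k,j}^{(1)})\to -b_0$ because $\partial B(p_{k,j}^{(1)},r_0)$ stays uniformly away from the blowup set, where $\varsigma_k\to -b_0$ by Step 1. For the remaining integral one splits $\rho_ke^{\phi_j}Hc_k\varsigma_k=\rho_ke^{\phi_j}Hc_k\,\beta_k+b_{j,0}\,\rho_ke^{\phi_j}Hc_k\,\varphi_{j,0}\big(\tfrac{\cdot-p_{k,j}^{(1)}}{\epsilon_{k,j}}\big)$, where $\beta_k(x)=\varsigma_k(x)-b_{j,0}\varphi_{j,0}\big(\tfrac{x-p_{k,j}^{(1)}}{\epsilon_{k,j}}\big)$. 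From \eqref{sigma-t-equ} the rescaled potential converges, as in the proof of Lemma \ref{lem-limit-1}, to $V_j(z):=8(1+\alpha_j)^2|z|^{2\alpha_j}(1+|z|^{2(1+\alpha_j)})^{-2}$, so $\rho_ke^{\phi_j}Hc_k\,\varphi_{j,0}\big(\tfrac{\cdot-p_{k,j}^{(1)}}{\epsilon_{k,j}}\big)=-\Delta\big[\varphi_{j,0}\big(\tfrac{\cdot-p_{k,j}^{(1)}}{\epsilon_{k,j}}\big)\big](1+o(1))$ because $\varphi_{j,0}$ lies in the kernel of $\Delta+V_j$; integrating by parts and using $\varphi_{j,0}(0)=1$ and $\varphi_{j,0}\to-1$ at infinity, the $b_{j,0}$-term of the integral tends to $2b_{j,0}$. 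Replacing the singular part of $G_{r_0}$ by $\tfrac1{2\pi}\log(r_0/|y-p_{k,j}^{(1)}|)$, rescaling to the bubble scale, and using $\beta_k\to 0$ in $C_{loc}$, $|\beta_k|\le 2$, the integrable decay of $V_j$, and the key identity $\int_{\mathbb R^2}V_j\varphi_{j,0}\,dz=0$ (equivalently $\int_{\mathbb R^2}V_j\beta_k=\int_{\mathbb R^2}V_j\varsigma_{k,j}$), the remaining integral collapses to $\tfrac1{2\pi}\log(r_0/\epsilon_{k,j})\int_{\mathbb R^2}V_j\beta_k\,dz+o(1)$. If this last quantity tends to $0$, the representation formula gives $b_{j,0}=-b_0+2b_{j,0}$, i.e. $b_{j,0}=b_0$.

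\textbf{Step 3 (the main obstacle).} The hard part is precisely $\log(r_0/\epsilon_{k,j})\int_{\mathbb R^2}V_j\beta_k\,dz\to 0$ — equivalently that the signed concentrated mass $m_k^{(j)}=\int_{B(p_{k,j}^{(1)},r_0)}\rho_ke^{\phi_j}Hc_k\varsigma_k\,dx$ is $o\!\big(1/\lambda_{k,j}^{(1)}\big)$ — because the leading-order mass already cancels by $\int_{\mathbb R^2}V_j\varphi_{j,0}=0$, so one must win a genuinely higher-order competition against the diverging logarithm. This is where the sharp estimates of Section \ref{preliminary} are needed: integrating \eqref{sigma-t-equ} over $B(0,r_0/\epsilon_{k,j})$ turns $m_k^{(j)}$ into the boundary flux $\int_{\partial B(p_{k,j}^{(1)},r_0)}\partial_r\varsigma_k\,ds$ up to errors controlled by \eqref{eta2}, \eqref{c-est} and \eqref{rho-kj-1}, and the flux is then estimated through \eqref{sigma2} applied to $\sigma_k^{(1)}-\sigma_k^{(2)}$, the closeness of $\rho_{k,j}^{(1)}$ to $\rho_{k,j}^{(2)}$ and of the $\lambda$'s (\eqref{lam-est-2}), together with the a priori bound \eqref{u-est1} of Lemma \ref{est1}. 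As in \cite{lin-yan-uniq} and \cite{bart-4}, this has to be carried out iteratively, starting from the crude bound of Lemma \ref{est1} and bootstrapping. The case $j>\tau$ is identical after centering at $p_{k,j}^{(1)}$: the extra kernel components $\varphi_{j,1},\varphi_{j,2}$ contribute nothing, since they vanish at the origin and $\int_{\mathbb R^2}V_j\varphi_{j,i}\,dz=0$ for $i=1,2$ by the radial symmetry of $V_j$ (here $\alpha_j=0$). Finally, since every subsequence has a further subsequence producing, by the above, a constant limit and the same relation $b_{j,0}=b_0$, one obtains $\varsigma_k\to -b_0$ for the full sequence (and in any case the later arguments are run along subsequences, so this point is harmless).
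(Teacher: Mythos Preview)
Your Step 1 is correct and is exactly the paper's argument.

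Steps 2--3, however, contain a genuine gap. You correctly reduce the matching $b_{j,0}=b_0$ to the statement that $\log(r_0/\epsilon_{k,j})\int V_j\varsigma_{k,j}\,dz\to 0$, equivalently that $m_k^{(j)}=-\int_{\partial B(p_{k,j}^{(1)},r_0)}\partial_\nu\varsigma_k\,d\sigma$ is $o(1/\lambda_{k,j}^{(1)})$. Your proposed route is to estimate this flux via $\nabla(\sigma_k^{(1)}-\sigma_k^{(2)})$ and $\rho_{k,j}^{(1)}-\rho_{k,j}^{(2)}$. Those do give $|\nabla(u_k^{(1)}-u_k^{(2)})|=O(e^{-\lambda_{k,1}^{(1)}/(1+\alpha_1)})$ on $\partial B$, but after dividing by $\|u_k^{(1)}-u_k^{(2)}\|_{L^\infty}$ --- for which only the \emph{upper} bound \eqref{u-est1} is known, never a lower bound --- nothing better than $|\nabla\varsigma_k|\le C$ survives. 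That yields only $m_k^{(j)}=o(1)$, which does not beat the diverging logarithm. Bootstrapping on \eqref{u-est1} cannot help: a smaller numerator is cancelled by a possibly smaller denominator.

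The paper avoids this entirely by testing $\varsigma_k$ against the exact kernel element
\[
\varphi_{k,j}(x)=\frac{1-\tfrac{\rho_kh_j(p_{k,j}^{(1)})}{8(1+\alpha_j)^2}e^{\lambda_{k,j}^{(1)}}|x-p_{k,j}^{(1)}|^{2(1+\alpha_j)}}{1+\tfrac{\rho_kh_j(p_{k,j}^{(1)})}{8(1+\alpha_j)^2}e^{\lambda_{k,j}^{(1)}}|x-p_{k,j}^{(1)}|^{2(1+\alpha_j)}}
\]
of the linearization at $U_{k,j}^{(1)}$ (not merely the rescaled limit $\varphi_{j,0}$). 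Green's identity then produces a bulk integral of the form
\[
\int_{B_d}\varsigma_k\,\varphi_{k,j}\,\rho_k|x-p_{k,j}^{(1)}|^{2\alpha_j}\Big(h_j(p_{k,j}^{(1)})e^{U_{k,j}^{(1)}}-\tilde h_j\,c_k\Big)dx,
\]
a \emph{difference of coefficients}; the division by $\|u_k^{(1)}-u_k^{(2)}\|$ has already been absorbed into $\varsigma_k$, so nothing is lost. The integrand is $e^{U_{k,j}^{(1)}}\cdot O(|x-p_{k,j}^{(1)}|+\|u_k^{(1)}-u_k^{(2)}\|+\epsilon_{k,j})$, hence the Wronskian boundary term is $O(e^{-\gamma\lambda_{k,1}^{(1)}/(1+\alpha_1)})$ for every $d\in(R\epsilon_{k,j},r_0)$. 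Since $\varphi_{k,j}$ is radial this is a first-order ODE for the spherical average $\varsigma_{k,j}^*(r)$; using $\varphi_{k,j}\approx -1$, $\varphi_{k,j}'\approx 0$ in the intermediate annulus and integrating from $R\epsilon_{k,j}$ to a fixed $r$ connects $\varsigma_{k,j}^*(R\epsilon_{k,j})\to -b_{j,0}$ (Lemma \ref{lem-limit-1}) with $\varsigma_{k,j}^*(r)\to -b_0$ (Step 1), up to errors $o(1)+O(R^{-2(1+\alpha_j)})$; letting $k\to\infty$ and then $R\to\infty$ gives $b_{j,0}=b_0$. In particular, taking $d=r_0$ in the Wronskian identity already yields $m_k^{(j)}=O(e^{-\gamma\lambda_{k,1}^{(1)}/(1+\alpha_1)})$, which is precisely the estimate your argument was missing.
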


\begin{proof}[\textbf{Proof}]

Starting from the equation for $\varsigma_k$:
\begin{equation*}
\Delta_g\varsigma_k(x)+\rho_k H(x)c_k(x)\varsigma_k(x)=0\quad in\ \; M,
\end{equation*}
we observe from
 (\ref{c-est}) (\ref{sigma1}) and (\ref{sigma2}) that $c_k\rightarrow 0$ in $C_{loc}(M\backslash\{p_1,\cdots,p_m\})$. Since $\|\varsigma_k\|_{L^{\infty}(M)}\leq 1$,  $\varsigma_k\rightarrow \varsigma_0$ in $C_{loc}(M\backslash\{p_1,\cdots,p_m\})$, where $\varsigma_0$ satisfies
\begin{equation}\label{limit-equ2}
\Delta_g\varsigma_0=0\quad {\rm in}\ \; M\backslash\{p_1,\cdots,p_m\}.
\end{equation}
The bound for $\varsigma$: $\|\varsigma_0\|_{L^{\infty}(M)}\leq 1$, which comes from  $\|\varsigma_k\|_{L^{\infty}(M)}\leq 1$, yields the smoothness of $\varsigma_0$ on the whole manifold. Thus $\varsigma_0\equiv -b_0$ in $M$ for some constant $b_0$. In particular,
\begin{equation}\label{limit-equ3}
\varsigma_k\rightarrow -b_0\quad {\rm in}\ \; C_{loc}(M\backslash\{p_1,\cdots,p_m\}).
\end{equation}

For $1\leq j \leq m$, let
\begin{equation*}
	\varphi_{k,j}(x)=\frac{1-\frac{\rho_kh_j(p_{k,j}^{(1)})}{8(1+\alpha_j)^2}e^{\lambda_{k,j}^{(1)}}|x-p_{k,j}^{(1)}|^{2(1+\alpha_j)}}{1+\frac{\rho_kh_j(p_{k,j}^{(1)})}{8(1+\alpha_j)^2}e^{\lambda_{k,j}^{(1)}}|x-p_{k,j}^{(1)}|^{2(1+\alpha_j)}},\quad x\in B(p_{k,j}^{(1)},r_0).
\end{equation*}	
be a sequence of solutions of
\begin{equation*}
-\Delta\varphi_{k,j}(x)=\rho_kh_j(p_{k,j}^{(1)})|x-p_{k,j}^{(1)}|^{2\alpha_j}e^{U_{k,j}^{(1)}}\varphi_{k,j}(x),\quad x\in B(p_{k,j}^{(1)},r_0).
\end{equation*}	
Recall that
\begin{equation*}
-\Delta\varsigma_k(x)=\rho_k \tilde{h}_j(x)|x-p_{k,j}^{(1)}|^{2\alpha_j}\frac{e^{u_k^{(1)}(x)}-e^{u_k^{(2)}(x)}}{u_k^{(1)}(x)-u_k^{(2)}(x)}\varsigma_k(x),\quad x\in B(p_{k,j}^{(1)},r_0).
\end{equation*}	
Using (\ref{p_kj-location}) (\ref{eta1}) (\ref{eta2}), (\ref{eta3}) and integration by parts, we find, for $ d\in (0,r_0) $, that
\begin{align*}
&\int_{\partial B(p_{k,j}^{(1)},d)}\big(\varphi_{k,j}\frac{\partial\varsigma_k}{\partial\nu}-\varsigma_k\frac{\partial\varphi_{k,j}}{\partial\nu}\big){\rm d}\sigma =\int_{B(p_{k,j}^{(1)},d)}\big(\varphi_{k,j}\Delta\varsigma_k-\varsigma_k\Delta\varphi_{k,j}\big){\rm d}x \\
=& \int_{B(p_{k,j}^{(1)},d)}\rho_k\varsigma_k\varphi_{k,j}\Big(-\tilde{h}_j|x-p_{k,j}^{(1)}|^{2\alpha_j}\frac{e^{u_k^{(1)}}-e^{u_k^{(2)}}}{u_k^{(1)}-u_k^{(2)}}+h_j(p_{k,j}^{(1)})|x-p_{k,j}^{(1)}|^{2\alpha_j}e^{U_{k,j}^{(1)}}\Big){\rm d}x \\
=& \int_{B(p_{k,j}^{(1)},d)}\rho_k\varsigma_k\varphi_{k,j}|x-p_{k,j}^{(1)}|^{2\alpha_j}\Big(-\tilde{h}_je^{u_k^{(1)}}\big(1+O(|u_k^{(1)}-u_k^{(2)}|)\big)+h_j(p_{k,j}^{(1)})e^{U_{k,j}^{(1)}}\Big){\rm d}x  \\
=& \int_{B(p_{k,j}^{(1)},d)}\rho_k\varsigma_k\varphi_{k,j}|x-p_{k,j}^{(1)}|^{2\alpha_j}\Big(-\tilde{h}_je^{U_{k,j}^{(1)}+G_{k,j}^{(1)}-G_{k,j}^{(1)}(p_{k,j}^{(1)})+\eta_{k,j}^{(1)}}\big(1+O(|u_k^{(1)}-u_k^{(2)}|)\big)  \\
&\qquad \quad+h_j(p_{k,j}^{(1)})e^{U_{k,j}^{(1)}}\Big){\rm d}x .
\end{align*}

By scaling $x=\epsilon_{k,j}z+p_{k,j}^{(1)}$, (\ref{eta2}), (\ref{eta3}) and the estimate of
	$\parallel u_k^{(1)}-u_k^{(2)}\parallel_{L^{\infty}(M)}$, it is not hard to obtain
\begin{equation}\label{div-est}
	\int_{\partial B(p_{k,j}^{(1)},d)}\big(\varphi_{k,j}\frac{\partial\varsigma_k}{\partial\nu}-\varsigma_k\frac{\partial\varphi_{k,j}}{\partial\nu}\big){\rm d}\sigma=O\big(e^{-\frac{\gamma}{1+\alpha_1}\lambda_{k,1}^{(1)}}\big).
\end{equation}

Let $\varsigma_{k,j}(r)$ be the spherical average of $\varsigma_k$:
$$\varsigma_{k,j}^*(r)=\frac 1{2\pi}\int_{0}^{2\pi}\varsigma_k(r\cos \theta,r\sin \theta){\rm d}\theta, $$ where $r=|x-p_{k,j}^{(1)}|$. Then (\ref{div-est}) yields
\begin{equation*}
(\varsigma_{k,j}^*)'(r)\varphi_{k,j}(r)-\varsigma_{k,j}^*(r)\varphi_{k,j}'(r)=\frac{1}{r}O\big(e^{-\frac{\gamma}{1+\alpha_1}\lambda_{k,1}^{(1)}}\big),\quad r\in(R\epsilon_{k,j},r_0).
\end{equation*}
For any $r \in (R\epsilon_{k,j},r_0)$, we also notice that
\begin{align*}
&\varphi_{k,j}(r)=-1+\frac{1}{r^{2(1+\alpha_j)}}O(e^{-\lambda_{k,j}^{(1)}}),\\
&\varphi_{k,j}'(r)=\frac{1}{r^{2\alpha_j+3}}O(e^{-\lambda_{k,j}^{(1)}}).
\end{align*}
Then we conclude
\begin{equation}\label{ratial-d-est}
(\varsigma_{k,j}^*)'(r)=\frac{1}{r}O\big(e^{-\frac{\gamma}{1+\alpha_1}\lambda_{k,1}^{(1)}}\big)+\frac{1}{r^{2\alpha_j+3}}O(e^{-\lambda_{k,j}^{(1)}}),\quad r\in(R\epsilon_{k,j},r_0).
\end{equation}
Integrating (\ref{ratial-d-est}) from $R\epsilon_{k,j}$ to $r$, we get for all $r\in(R\epsilon_{k,j},r_0)$
\begin{align}\label{ratial-est}
\begin{split}
\varsigma_{k,j}^*(r)
=& \varsigma_{k,j}^*(R\epsilon_{k,j})+O\big(e^{-\frac{\gamma}{1+\alpha_1}\lambda_{k,j}^{(1)}}\big)\big(\log r+\log R+\lambda_{k,j}^{(1)}\big)  \\
&+O\big(e^{-\lambda_{k,j}^{(1)}}\big) \big(r^{-2(1+\alpha_j)}+e^{\lambda_{k,j}^{(1)}}R^{-2(1+\alpha_j)}\big) \\
=& \varsigma_{k,j}^*(R\epsilon_{k,j})+o(1)\log R+O(R^{-2(1+\alpha_j)}) .
\end{split}
\end{align}

The first term of (\ref{ratial-est}) is almost a constant ( Lemma \ref{lem-limit-1} ):
\begin{equation*}
\varsigma_{k,j}^*(R\epsilon_{k,j})=- b_{j,0}+o(1)+o_R(1),
\end{equation*}
where $\lim_{R\to +\infty}o_R(1)=0$. Then it is easy to see from  (\ref{ratial-est}) and (\ref{limit-equ3}) that $b_{j,0}=b_0$ for all $1\le j\le m$.
\end{proof}

Next we introduce a few quantities to be used later. For $1 \leq j\leq m$, let
\begin{align}\label{phi-kj}
\begin{split}
\phi_{k,j}(x)=&\frac{\rho_k}{\sum_{l=1}^m(1+\alpha_l)}\bigg\{(1+\alpha_j)\Big(R(x,p_{k,j}^{(1)})-R(p_{k,j}^{(1)},p_{k,j}^{(1)})\Big) \\
&+\sum_{l\neq j}^{1,\cdots,m}(1+\alpha_l)\Big(G(x,p_{k,l}^{(1)})-G(p_{k,j}^{(1)},p_{k,l}^{(1)})\Big)\bigg\},
\end{split}	
\end{align}
\begin{equation}\label{G-k-tilde}
\tilde{G}_k(x)=8\pi\sum_{l=1}^{m}(1+\alpha_l)G(x,p_{k,l}^{(1)}),
\end{equation}
It is easy to see that in $M\setminus \{p_{k,1}^{(1)},\cdots,p_{k,m}^{(1)}\}$,
\begin{equation}\label{G-phi-equ}
\nabla\big(\tilde{G}_k(x)-\phi_{k,j}(x)\big)=-4(1+\alpha_j)\frac{x-p_{k,j}^{(1)}}{|x-p_{k,j}^{(1)}|^2},\quad \Delta\big(\tilde{G}_k(x)-\phi_{k,j}(x)\big)=0.
\end{equation}
Set
\begin{equation}\label{v-kj}
v_{k,j}^{(i)}(x)=u_k^{(i)}(x)-\phi_{k,j}(x),\quad  i=1,2,
\end{equation}
and
\begin{equation}\label{A-kj}
A_{k,j}=\int_{M_j} f_k \, {\rm d}\mu,
\end{equation}
then we estimate $\nabla v_{k,j}^{(i)}$ away from the bubbling area:

\begin{lem}\label{lem-Dv-kj}
	
	For any $\theta\in(0,r_0)$ small enough and $x\in B(p_{k,j}^{(1)},2r_0)\setminus B(p_{k,j}^{(1)},\theta)$, the gradient of $v_{k,j}^{(i)}$ is very close to that of a harmonic function:
	\begin{align}\label{Dv-kj-est}
	\nabla v_{k,j}^{(i)}(x)=-4(1+\alpha_j)\frac{x-p_{k,j}^{(1)}}{|x-p_{k,j}^{(1)}|^2}+O(e^{-\frac{\lambda_{k,j}^{(1)}}{1+\alpha_1}}).
	\end{align}
\end{lem}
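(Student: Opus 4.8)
\textbf{Proof proposal for Lemma \ref{lem-Dv-kj}.}

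The plan is to exploit the fact that, away from the concentration scale $\epsilon_{k,j}$ but still inside a fixed ball, the solution $u_k^{(i)}$ is, up to the explicit singular Green's-function part, essentially a bounded harmonic function. First I would recall from the definition $v_{k,j}^{(i)}=u_k^{(i)}-\phi_{k,j}$ in (\ref{v-kj}) and from (\ref{G-phi-equ}) that $\tilde G_k-\phi_{k,j}$ is harmonic with gradient exactly $-4(1+\alpha_j)\,(x-p_{k,j}^{(1)})/|x-p_{k,j}^{(1)}|^2$. So it suffices to show that $w_{k,j}^{(i)}:=u_k^{(i)}-\tilde G_k$ has gradient $O(e^{-\lambda_{k,j}^{(1)}/(1+\alpha_1)})$ on the annulus $B(p_{k,j}^{(1)},2r_0)\setminus B(p_{k,j}^{(1)},\theta)$. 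The natural route is the Green's representation formula on $M$ (exactly as used in Step 2 of the proof of Lemma \ref{est1}): write $u_k^{(i)}-\bar u_k^{(i)}$ as an integral of $G(y,x)\rho_k H(y)e^{u_k^{(i)}(y)}$, subtract off the leading singular contribution which reconstructs $\tilde G_k$ (using that the local masses $\rho_{k,l}^{(i)}$ differ from $8\pi(1+\alpha_l)$ by the small quantities in (\ref{rho-kj-1})--(\ref{rho-kj-2})), and differentiate in $x$. Since $x$ stays at distance $\ge\theta$ from every $p_{k,l}^{(1)}$, the kernel $\nabla_x G(y,x)$ and its Taylor remainder are uniformly controlled, and the bubble mass concentrates in $B(p_{k,j}^{(1)},R\epsilon_{k,j})$, so the whole error collapses to the claimed order.

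More concretely, the key steps in order: (1) Split $\nabla_x\big(u_k^{(i)}(x)-\bar u_k^{(i)}\big)=\sum_l\int_{B(p_{k,l}^{(1)},r_0/2)}\nabla_xG(y,x)\rho_kH e^{u_k^{(i)}}\,d\mu(y)+\int_{M\setminus\bigcup_l B(p_{k,l}^{(1)},r_0/2)}\nabla_xG(y,x)\rho_kH e^{u_k^{(i)}}\,d\mu(y)$. (2) For the near term with index $l$, use $\nabla_xG(y,x)=\nabla_xG(p_{k,l}^{(1)},x)+O(|y-p_{k,l}^{(1)}|)$, so the contribution is $\rho_{k,l}^{(i)}\nabla_xG(p_{k,l}^{(1)},x)+O\big(\int_{B}|y-p_{k,l}^{(1)}|\rho_kHe^{u_k^{(i)}}\big)$; the first piece assembles (via (\ref{rho-kj-1}), (\ref{rho-kj-2}) and $\rho_{k,l}^{(i)}=8\pi(1+\alpha_l)+O(e^{-\lambda_{k,1}^{(1)}/(1+\alpha_1)})$ after accounting for the relation $\lambda_{k,l}^{(i)}-\lambda_{k,1}^{(i)}=O(1)$) into $\nabla_x\tilde G_k(x)$ plus an $O(e^{-\lambda_{k,j}^{(1)}/(1+\alpha_1)})$ error, while the remainder integral is handled by the scaling $y=\epsilon_{k,l}z+p_{k,l}^{(1)}$ together with the standard-bubble closeness (\ref{standard-bubble}), (\ref{eta1})--(\ref{eta3}), giving an extra factor $\epsilon_{k,l}$ which is far smaller than needed. (3) For the far term, $e^{u_k^{(i)}}$ is exponentially small (order $e^{-\lambda_{k,1}^{(i)}}$, cf. (\ref{sigma2})) on $M\setminus\bigcup_lB(p_{k,l}^{(1)},r_0/2)$ and the kernel is bounded since $\mathrm{dist}(x,y)$ is bounded below, so this is negligible. (4) Subtract $\nabla\phi_{k,j}$; by (\ref{G-phi-equ}) the $\tilde G_k$ part cancels against it to produce precisely $-4(1+\alpha_j)(x-p_{k,j}^{(1)})/|x-p_{k,j}^{(1)}|^2$, and all remaining terms are $O(e^{-\lambda_{k,j}^{(1)}/(1+\alpha_1)})$, which is (\ref{Dv-kj-est}). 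One should also absorb $\lambda_{k,j}^{(1)}-\lambda_{k,1}^{(1)}=O(1)$ (from (\ref{lambda-ij})) so that the exponent $e^{-\lambda_{k,j}^{(1)}/(1+\alpha_1)}$ and $e^{-\lambda_{k,1}^{(1)}/(1+\alpha_1)}$ are interchangeable up to constants.

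The main obstacle I anticipate is step (2): matching the assembled sum of the $\rho_{k,l}^{(i)}\nabla_xG(p_{k,l}^{(1)},x)$ terms to $\nabla_x\tilde G_k(x)$ with an error of the correct order $e^{-\lambda_{k,j}^{(1)}/(1+\alpha_1)}$ rather than something weaker. This requires being careful that the error in (\ref{rho-kj-2}), namely $O(\lambda_{k,l}^{(1)}e^{-\lambda_{k,l}^{(1)}})$ for the non-singular points $l>\tau$, and the error $O(e^{-\lambda_{k,1}^{(1)}/(1+\alpha_1)})$ in (\ref{rho-kj-1}) for the singular points, are both dominated by $e^{-\lambda_{k,j}^{(1)}/(1+\alpha_1)}$ — which holds because $1/(1+\alpha_1)<1$ and all $\lambda_{k,l}^{(i)}$ are comparable. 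A secondary technical point is the Taylor-remainder estimate $O(|y-p_{k,l}^{(1)}|^2)$ versus $O(|y-p_{k,l}^{(1)}|)$ in $\nabla_xG$; the linear term integrates to zero by the approximate radial symmetry of the bubble (as in the $I_1$ estimate in Lemma \ref{est1}), leaving a genuinely higher-order contribution, so one must invoke that symmetry rather than crude absolute-value bounds. Everything else is routine once these two points are pinned down.
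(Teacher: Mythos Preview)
Your proposal is correct but works considerably harder than necessary. The paper's proof is a three-line affair: it simply invokes the already-cited estimate (\ref{sigma2}) for $\nabla\sigma_k^{(i)}$ (which is quoted from \cite{chen-lin} as Theorem~\ref{Theorem chen-lin1}), observes that $\sigma_k^{(i)}$ and $v_{k,j}^{(i)}+\phi_{k,j}-\tilde G_k-\bar u_k^{(i)}$ differ only through the replacement of the true local masses $\rho_{k,l}^{(i)}$ by $8\pi(1+\alpha_l)$ and of the centers $p_{k,l}^{(i)}$ by $p_{k,l}^{(1)}$, and then uses Theorem~\ref{Theorem chen-lin2} and (\ref{p_kj-location}) to bound the resulting gradient discrepancy by $O(e^{-\lambda_{k,j}^{(1)}/(1+\alpha_1)})$. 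What you outline in your steps (1)--(3) is essentially a from-scratch derivation of (\ref{sigma2}) via the Green's representation, which is fine and would yield the same conclusion, but it duplicates work that the paper has already outsourced to the preliminary section. The benefit of your route is that it is self-contained and makes transparent exactly where each error term comes from; the benefit of the paper's route is brevity, since the hard analytic content has been packaged into $\sigma_k$ once and for all.
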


\begin{proof}[\textbf{Proof}]
	
	By (\ref{sigma1}) (\ref{sigma2}) and Theorem \ref{Theorem chen-lin2}, we have
	\begin{align*}
	O(e^{-\frac{\lambda_{k,j}^{(1)}}{1+\alpha_1}})=\nabla \sigma_k^{(i)}=\nabla (v_{k,j}^{(i)}+\phi_{k,j}-\tilde{G}_k)+O(e^{-\frac{\lambda_{k,j}^{(1)}}{1+\alpha_1}}), \quad x\in B(p_{k,j}^{(1)},2r_0)\setminus B(p_{k,j}^{(1)},\theta)
	\end{align*}
	 for $i=1,2$, $1\leq j\leq m$. Consequently using (\ref{phi-kj})$\sim$(\ref{G-phi-equ}) we have
	\begin{align*}
	\nabla v_{k,j}^{(i)}(x)=&\nabla \big(\tilde{G}_k(x)-\phi_{k,j}(x)\big)+O(e^{-\frac{\lambda_{k,j}^{(1)}}{1+\alpha_1}})\\
	=&-4(1+\alpha_j)\frac{x-p_{k,j}^{(1)}}{|x-p_{k,j}^{(1)}|^2}+O(e^{-\frac{\lambda_{k,j}^{(1)}}{1+\alpha_1}}).
	\end{align*}
	
\end{proof}
Next we estimate $\varsigma_k$ and its derivatives away from blowup points.
\begin{lem}\label{lem-C1-est}
	
	Given $\theta\in(0,r_0)$ small enough, we have
	\begin{align}\label{GRF-est-1}
	\begin{split}
	\varsigma_k-\bar{\varsigma}_k=\sum_{j=1}^m A_{k,j}G(p_{k,j}^{(1)},x)+o(e^{-\frac{\lambda_{k,1}^{(1)}}{2(1+\alpha_1)}}) \quad {\rm in}\ \; M\setminus\bigcup_{j=1}^m B(p_{k,j}^{(1)},\theta).
	\end{split}
	\end{align}
	
\end{lem}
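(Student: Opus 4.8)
The plan is to start from the Green's representation of $\varsigma_k$. Since $-\Delta_g\varsigma_k=f_k$ with $\int_M f_k\,{\rm d}\mu=0$, (\ref{gf}) gives
\[
\varsigma_k(x)-\bar{\varsigma}_k=\int_M G(y,x)f_k(y)\,{\rm d}\mu(y),
\]
and, using the partition $M=\bigcup_{j=1}^m\overline{M}_j$ from (\ref{Mj}), I would split each $\int_{M_j}$ into an inner part over $B(p_{k,j}^{(1)},r_0/2)$ and an outer part over $M_j\setminus B(p_{k,j}^{(1)},r_0/2)$. On the outer part the standard bubble estimate (\ref{standard-bubble}), $\|\varsigma_k\|_{L^\infty(M)}\le1$ and (\ref{c-est}) force $|f_k|=O(e^{-\lambda_{k,1}^{(1)}})$; since $\sup_x\int_M|G(y,x)|\,{\rm d}\mu(y)<\infty$, every outer integral is $O(e^{-\lambda_{k,1}^{(1)}})=o(e^{-\lambda_{k,1}^{(1)}/(2(1+\alpha_1))})$ (all $\lambda_{k,j}^{(1)}$ being comparable by (\ref{lambda-ij})), and moreover $\int_{B(p_{k,j}^{(1)},r_0/2)}f_k\,{\rm d}\mu=A_{k,j}+O(e^{-\lambda_{k,1}^{(1)}})$.

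For the inner part I would Taylor expand $G(y,x)=G(p_{k,j}^{(1)},x)+\langle\nabla_yG(p_{k,j}^{(1)},x),y-p_{k,j}^{(1)}\rangle+O(|y-p_{k,j}^{(1)}|^2)$, which is legitimate since $x\notin B(p_{k,j}^{(1)},\theta)$ keeps $G(p_{k,j}^{(1)},x)$ and $\nabla_yG(p_{k,j}^{(1)},x)$ bounded (by constants depending on the fixed $\theta$). The zeroth-order term contributes $G(p_{k,j}^{(1)},x)\big(A_{k,j}+O(e^{-\lambda_{k,1}^{(1)}})\big)$, which is the main term $A_{k,j}G(p_{k,j}^{(1)},x)$ of (\ref{GRF-est-1}) up to an admissible error. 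After the rescaling $y=\epsilon_{k,j}z+p_{k,j}^{(1)}$, and recalling from the computation in Lemma \ref{lem-limit-1}(i) that the pull-back of $f_k\,{\rm d}\mu$ equals $W_0(z)\varsigma_{k,j}(z)\big(1+O(\epsilon_{k,j}|z|)+O(e^{-\lambda_{k,j}^{(1)}/(2(1+\alpha_j))})+O(\|u_k^{(1)}-u_k^{(2)}\|_{L^\infty(M)})\big)\,{\rm d}z$ with $W_0(z)=8(1+\alpha_j)^2|z|^{2\alpha_j}(1+|z|^{2(1+\alpha_j)})^{-2}$, the $O(|y-p_{k,j}^{(1)}|^2)$ term is $O(\epsilon_{k,j}^2)\int_{|z|<r_0/(2\epsilon_{k,j})}|z|^2W_0(z)\,{\rm d}z=O(\epsilon_{k,j}^2\lambda_{k,j}^{(1)})$, which is $o(e^{-\lambda_{k,1}^{(1)}/(2(1+\alpha_1))})$ in every case.

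The delicate term is the first-order (dipole) one, $\big\langle\nabla_yG(p_{k,j}^{(1)},x),\int_{B(p_{k,j}^{(1)},r_0/2)}(y-p_{k,j}^{(1)})f_k\,{\rm d}\mu\big\rangle$, because a crude bound only gives $O(\epsilon_{k,j})=O(e^{-\lambda_{k,j}^{(1)}/(2(1+\alpha_j))})$, which for $1\le j\le t$ is merely comparable to $e^{-\lambda_{k,1}^{(1)}/(2(1+\alpha_1))}$, not $o$ of it; so the $O(\epsilon_{k,j})$ must be improved to $o(\epsilon_{k,j})$ for those indices. Using the pull-back above,
\[
\int_{B(p_{k,j}^{(1)},r_0/2)}(y-p_{k,j}^{(1)})f_k\,{\rm d}\mu=\epsilon_{k,j}\int_{|z|<r_0/(2\epsilon_{k,j})}z\,W_0(z)\varsigma_{k,j}(z)\,{\rm d}z+o(\epsilon_{k,j}),
\]
the $o(\epsilon_{k,j})$ absorbing the $O(\epsilon_{k,j}|z|)$, the $\eta_{k,j}^{(i)}$ and the $\|u_k^{(1)}-u_k^{(2)}\|_{L^\infty(M)}$ corrections exactly as above. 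For $1\le j\le\tau$, Lemmas \ref{lem-limit-1}(i) and \ref{lem-limit-2} give $\varsigma_{k,j}\to b_{j,0}\varphi_{j,0}$ with $\varphi_{j,0}$ and $W_0$ both radial, so $z\,W_0(z)\varphi_{j,0}(z)$ is odd and integrates to $0$ over the symmetric disk; the remainder $z\,W_0(z)(\varsigma_{k,j}-b_{j,0}\varphi_{j,0})$ is then handled by a two-scale argument — on $|z|\le R$ use the $C_{loc}$ convergence, and on $R<|z|<r_0/(2\epsilon_{k,j})$ use $|\varsigma_{k,j}|\le1$ together with $\int_{|z|>R}|z|W_0(z)\,{\rm d}z=O(R^{-1-2\alpha_j})$ — so the integral is $o(1)$ and this term is $o(\epsilon_{k,j})=o(e^{-\lambda_{k,1}^{(1)}/(2(1+\alpha_1))})$. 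For $\tau+1\le j\le m$ one has $\alpha_j=0$, hence $\epsilon_{k,j}=e^{-\lambda_{k,j}^{(1)}/2}$, and even the crude bound $O(\epsilon_{k,j})$ is $o(e^{-\lambda_{k,1}^{(1)}/(2(1+\alpha_1))})$ since $\alpha_1>0$, so no cancellation is needed there. Adding the three contributions and summing over $j=1,\dots,m$ yields (\ref{GRF-est-1}). \textbf{The main obstacle} is precisely this symmetry-driven cancellation of the dipole term at the strongest singular sources, together with the uniform-in-$k$ control of the tail of the rescaled integral; the gradient version of (\ref{GRF-est-1}), if needed later, follows by differentiating the Green's representation and running the same estimates.
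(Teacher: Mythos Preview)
Your proof is correct and follows essentially the same route as the paper: Green's representation, split into inner ball and outer annulus, Taylor-expand $G(y,x)$ about $p_{k,j}^{(1)}$, and use the radiality of the limit $b_{j,0}\varphi_{j,0}$ for $1\le j\le\tau$ to kill the dipole term via odd symmetry (with a two-scale tail control), while for $\tau+1\le j\le m$ the crude $O(\epsilon_{k,j})=O(e^{-\lambda_{k,j}^{(1)}/2})$ already suffices since $\alpha_1>0$. The paper organizes the same computation slightly differently (it groups by $1\le j\le\tau$ versus $\tau+1\le j\le m$ from the outset and records the explicit leading coefficient $B_j\sum_h b_{j,h}\partial_{y_h}G$ of the dipole term for the regular blowups, to be reused later in (\ref{Dsigma_k-2})), but the analytical content and the key cancellation are identical.
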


\begin{proof}[\textbf{Proof}]
	From the Green's representation formula for $u_k^{(i)}$ and the definition of $\varsigma_k$, we have the following expression of $\varsigma_k$:
$$\varsigma_k(x)-\bar \varsigma_k=\int_MG(y,x)f_k(y)d\mu(y). $$
Then for $ x\in M\setminus\bigcup_{j=1}^m B(p_{k,j}^{(1)},\theta)$ we evaluate the integral in three parts:
	\begin{align}\label{J1+J2+J3}
	\begin{split}
	\varsigma_k(x)-\bar{\varsigma}_k=&\sum_{j=1}^m A_{k,j}G(p_{k,j}^{(1)},x)+\sum_{j=1}^\tau\int_{M_j}\big(G(y,x)-G(p_{k,j}^{(1)},x)\big)f_k(y){\rm d}\mu(y) \\
	&+\sum_{j=\tau+1}^m\int_{M_j}\big(G(y,x)-G(p_{k,j}^{(1)},x)\big)f_k(y){\rm d}\mu(y) \\
	=&\mathrel{\mathop:}J_1+J_2+J_3
	\end{split}
	\end{align}
	Note that $J_3=\emptyset$ if  $\tau=m$.
	Then it follows from the definition of $\eta_{k,j}^{(1)}$, (\ref{eta2}) and (\ref{eta3}) that
	\begin{align*}
	&\int_{M_j}\langle\partial_yG(y,x)\big|_{y=p_{k,j}^{(1)}},y-p_{k,j}^{(1)}\rangle f_k(y){\rm d}\mu(y)  \\
	=&\int_{B(p_{k,j}^{(1)},r_0)} \langle\partial_yG(y,x)\big|_{y=p_{k,j}^{(1)}},y-p_{k,j}^{(1)}\rangle f_k(y)e^{\phi_j(y)}{\rm d}y+O(e^{-\lambda_{k,j}^{(1)}}) \\
	=&\int_{B(p_{k,j}^{(1)},r_0)} \langle\partial_yG(y,x)\big|_{y=p_{k,j}^{(1)}},y-p_{k,j}^{(1)}\rangle \rho_kh_j(p_{k,j}^{(1)})|y-p_{k,j}^{(1)}|^{2\alpha_j}e^{U_{k,j}^{(1)}} \varsigma_k(y)  \\
	&\times\big(1+O(|y-p_{k,j}^{(1)}|+\epsilon_{k,j}+\epsilon_{k,1}^2)\big)(1+o(1)){\rm d}y+O(e^{-\lambda_{k,j}^{(1)}}).
	\end{align*}
	
	For $1\leq j\leq \tau$, using $y=\epsilon_{k,j}z+p_{k,j}^{(1)}$ in the evaluation of the identity above, we have
	\begin{align*}
	&\int_{B(p_{k,j}^{(1)},r_0)} \langle\partial_yG(y,x)\big|_{y=p_{k,j}^{(1)}},y-p_{k,j}^{(1)}\rangle f_k(y)e^{\phi_j}{\rm d}y  \\
	=&\epsilon_{k,j}\int_{|z|<\frac{r_0}{\epsilon_{k,j}}}\langle\partial_yG(y,x)\big|_{y=p_{k,j}^{(1)}},z\rangle\frac{8(1+\alpha_j)^2|z|^{2\alpha_j}}{(1+|z|^{2(1+\alpha_j)})^2}\Big(b_0\frac{1-|z|^{2(1-\alpha_j)}}{1+|z|^{2(1+\alpha_j)}}+o(1)\Big)  \\
	&\times\big(1+O(\epsilon_{k,j}|z|+\epsilon_{k,j}+\epsilon_{k,1}^2)+o(1)\big){\rm d}z.	 \\
	=&o(e^{-\frac{\lambda_{k,j}^{(1)}}{2(1+\alpha_j)}}),
	\end{align*}

	If $\tau <m$, let us recall that $\alpha_j=0$ for $\tau+1\leq j\leq m$. Similarly, by the standard scaling, Lemma \ref{lem-limit-1} and symmetry, we have
	\begin{align*}
	&\int_{B(p_{k,j}^{(1)},r_0)} \langle\partial_yG(y,x)\big|_{y=p_{k,j}^{(1)}},y-p_{k,j}^{(1)}\rangle  f_k(y)e^{\phi_j}{\rm d}y  \\
	=&\epsilon_{k,j}\int_{|z|<\frac{r_0}{\epsilon_{k,j}}}\langle\partial_yG(y,x)\big|_{y=p_{k,j}^{(1)}},z\rangle\frac{8}{(1+|z|^2)^{2}}\varsigma_{k,j}(z){\rm d}z+o(e^{-\frac{\lambda_{k,j}^{(1)}}{2}})	 \\
	=&e^{-\frac{\lambda_{k,j}^{(1)}}{2}}\Big(\sum_{h=1}^2\partial_{y_h}G(y,x)\big|_{y=p_{k,j}^{(1)}}b_{j,h}\Big)B_j +o(e^{-\frac{\lambda_{k,j}^{(1)}}{2}}),
	\end{align*}
	where
	\begin{equation}\label{Bj}
		B_j=4\sqrt{\frac{8}{\rho_kh_j(p_{k,j}^{(1)})}}\displaystyle\int_{\mathbb{R}^2}\frac{|z|^2}{(1+|z|^2)^3}{\rm d}z .
	\end{equation}
	For the second order terms in the expansion of $G$, we have
	\begin{align*}
	&\int_{M_j}|y-p_{k,j}^{(1)}|^2f_k{\rm d}\mu(y) \\
	=&\int_{B(p_{k,j}^{(1)},r_0)}|y-p_{k,j}^{(1)}|^2f_ke^{\phi_j}{\rm d}y+O(e^{-\lambda_{k,j}^{(1)}})   \\
	=&O(e^{-\frac{\lambda_{k,j}^{(1)}}{1+\alpha_j}})\int_{|z|<\frac{r_0}{\epsilon_{k,j}}}\frac{|z|^{2(1+\alpha_j)}}{(1+|z|^{2(1+\alpha_j)})^2}(1+o(1)){\rm d}z+O(e^{-\lambda_{k,j}^{(1)}})  \\
	=&\left\{
	\begin{array}{lcl}
	O(e^{-\frac{1}{1+\alpha_j}\lambda_{k,j}^{(1)}}), && 1\leq j\leq \tau, \\
	O(\lambda_{k,j}^{(1)}e^{-\lambda_{k,j}^{(1)}}),  && \tau+1\leq j\leq m.
	\end{array}
	\right.
	\end{align*}
	
	Consequently for $J_2$ and $J_3$ we have
	\begin{equation}\label{J2}
		J_2=o(e^{-\frac{\lambda_{k,1}^{(1)}}{2(1+\alpha_1)}}),
	\end{equation}
	\begin{equation}\label{J3}
		J_3=\sum_{j=\tau+1}^me^{-\frac{\lambda_{k,j}^{(1)}}{2}}\Big(\sum_{h=1}^2\partial_{y_h}G(y,x)\big|_{y=p_{k,j}^{(1)}}b_{j,h}\Big)B_j +o(e^{-\frac{\lambda_{k,1}^{(1)}}{2}}).
	\end{equation}
	
  Observing (\ref{J1+J2+J3}) (\ref{J2}) and (\ref{J3}), we conclude that (\ref{GRF-est-1}) holds. Then by standard estimates we also have
  \begin{align}\label{Dsigma_k-1}
  \nabla\varsigma_k(x)=\sum_{j=\tau+1}^m A_{k,j}\nabla_xG(p_{k,j}^{(1)},x)+o(e^{-\frac{\lambda_{k,1}^{(1)}}{2(1+\alpha_1)}}), \quad x\in M\setminus\bigcup_{j=1}^m B(p_{k,j}^{(1)},\theta).
  \end{align}
	
\end{proof}

\section{Estimates associated with Pohozaev identities}\label{anal-pohozaev}

 In this section, we establish some sharp estimates for certain terms crucial for evaluation of Pohozaev identities.

\smallskip

The first important quantity is $A_{k,j}$, defined in (\ref{GRF-est-1}) and the study of which is through the following Pohozaev identity:

\begin{lem}\label{Pohozaev identity-1}

	For $1 \leq j \leq m$ and any $r\in(0,r_0)$, it holds that
	\begin{align}\label{PI-1}
	\begin{split}
	& \frac{1}{2}\int_{\partial B(p_{k,j}^{(1)},r)}r\langle \nabla v_{k,j}^{(1)}+\nabla v_{k,j}^{(2)},\nabla \varsigma_k\rangle{\rm d}\sigma \\
	&  -\int_{\partial B(p_{k,j}^{(1)},r)}r\langle\nu,\nabla v_{k,j}^{(1)}+\nabla v_{k,j}^{(2)}\rangle  \langle\nu,\nabla\varsigma_k\rangle {\rm d}\sigma \\
	=& \int_{\partial B(p_{k,j}^{(1)},r)}\frac{r\rho_k\tilde{h}_j|x-p_{k,j}^{(1)}|^{2\alpha_j}\big(e^{v_{k,j}^{(1)}+\phi_{k,j}}-e^{v_{k,j}^{(2)}+\phi_{k,j}}\big)}{\parallel v_{k,j}^{(1)}-v_{k,j}^{(2)} \parallel_{L^{\infty}(M)} } {\rm d}\sigma \\
	&  - 2(1+\alpha_j)\int_{B(p_{k,j}^{(1)},r)} \frac{\rho_k\tilde{h}_j|x-p_{k,j}^{(1)}|^{2\alpha_j}\big(e^{v_{k,j}^{(1)}+\phi_{k,j}}-e^{v_{k,j}^{(2)}+\phi_{k,j}}\big)}{\parallel v_{k,j}^{(1)}-v_{k,j}^{(2)} \parallel_{L^{\infty}(M)}} {\rm d}x\\
	&  -\int_{B(p_{k,j}^{(1)},r)} \frac{\rho_k\tilde{h}_j|x-p_{k,j}^{(1)}|^{2\alpha_j}\big(e^{v_{k,j}^{(1)}+\phi_{k,j}}-e^{v_{k,j}^{(2)}+\phi_{k,j}}\big)}{\parallel v_{k,j}^{(1)}-v_{k,j}^{(2)} \parallel_{L^{\infty}(M)}} \langle \nabla\big(\log \tilde{h}_j+\phi_{k,j}\big),x-p_{k,j}^{(1)}\rangle {\rm d}x.
	\end{split}
	\end{align}
	
\end{lem}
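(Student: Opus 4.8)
The plan is to derive \eqref{PI-1} directly from the Pohozaev identity for the difference quotient $\varsigma_k$, combined with the equation $\Delta_g \varsigma_k + \rho_k H c_k \varsigma_k = 0$ rewritten in the flat coordinate chart around $p_{k,j}^{(1)}$. Recall that on $B(p_{k,j}^{(1)}, r_0)$ the equation for $u_k^{(i)}$ in the flat metric is \eqref{flat-equ-2}, so after subtracting the two equations (for $i=1,2$) and dividing by $\|v_{k,j}^{(1)} - v_{k,j}^{(2)}\|_{L^\infty}$ — which equals $\|u_k^{(1)}-u_k^{(2)}\|_{L^\infty}$ up to a harmless factor since $v_{k,j}^{(i)} = u_k^{(i)} - \phi_{k,j}$ and $\phi_{k,j}$ is independent of $i$ — one gets
\[
\Delta \varsigma_k + \rho_k \tilde h_j |x - p_{k,j}^{(1)}|^{2\alpha_j}\,\frac{e^{v_{k,j}^{(1)}+\phi_{k,j}} - e^{v_{k,j}^{(2)}+\phi_{k,j}}}{\|v_{k,j}^{(1)}-v_{k,j}^{(2)}\|_{L^\infty(M)}} = 0 \quad \text{in } B(p_{k,j}^{(1)}, r_0),
\]
where I have absorbed the $-\rho_k e^{\phi_j}$ terms: they cancel in the difference. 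Denote the coefficient function by $W_k(x) := \rho_k \tilde h_j |x-p_{k,j}^{(1)}|^{2\alpha_j}\big(e^{v_{k,j}^{(1)}+\phi_{k,j}} - e^{v_{k,j}^{(2)}+\phi_{k,j}}\big)/\|v_{k,j}^{(1)}-v_{k,j}^{(2)}\|_{L^\infty(M)}$, so $-\Delta\varsigma_k = W_k$.

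The core computation is the standard Rellich–Pohozaev manipulation. First I would multiply $-\Delta\varsigma_k = W_k$ by the ``test vector field'' $\langle x - p_{k,j}^{(1)}, \nabla(v_{k,j}^{(1)}+v_{k,j}^{(2)})\rangle$ (equivalently one can symmetrize in $\varsigma_k$ versus $v_{k,j}^{(1)}+v_{k,j}^{(2)}$; the structure of the left side of \eqref{PI-1}, with the $\frac12$ and the boundary term $\langle\nu,\nabla v\rangle\langle\nu,\nabla\varsigma_k\rangle$, is exactly the symmetric bilinear Pohozaev form). Integrating over $B(p_{k,j}^{(1)}, r)$ and using the divergence theorem converts the $\Delta$-terms into the two boundary integrals on the left-hand side of \eqref{PI-1} — this is the identity $\int_{B_r}\big(\Delta u \,\langle x-p, \nabla\phi\rangle + \Delta\phi\,\langle x-p,\nabla u\rangle\big) = \int_{\partial B_r}\big(r\langle\nabla u,\nabla\phi\rangle - 2r\langle\nu,\nabla u\rangle\langle\nu,\nabla\phi\rangle\big) + (\text{dimension }0\text{ in }2\text{D})$, specialized to $n=2$ where the bulk term $\frac{n-2}{2}\int\langle\nabla u,\nabla\phi\rangle$ vanishes. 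On the right-hand side, the term $\int W_k \langle x-p_{k,j}^{(1)}, \nabla(v_{k,j}^{(1)}+v_{k,j}^{(2)})\rangle\,dx$ must be handled by writing $v_{k,j}^{(i)} = $ (bubble part) $+ \log\tilde h_j + \phi_{k,j} + \cdots$ inside the exponent and integrating by parts once more in $x$: differentiating the factor $|x-p_{k,j}^{(1)}|^{2\alpha_j} e^{\,\cdot\,}$ produces the $2(1+\alpha_j)$-multiple of the bulk integral of $W_k$ (from the combined homogeneity $2\alpha_j$ of the weight and the effective degree of the bubble exponent), the boundary integral of $r W_k$, and the correction term with $\langle\nabla(\log\tilde h_j + \phi_{k,j}), x-p_{k,j}^{(1)}\rangle$. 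This is precisely the bookkeeping that yields the four terms on the right of \eqref{PI-1}.

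I expect the main obstacle to be the second integration by parts — organizing the $x$-derivative of $|x - p_{k,j}^{(1)}|^{2\alpha_j}\big(e^{v_{k,j}^{(1)}+\phi_{k,j}} - e^{v_{k,j}^{(2)}+\phi_{k,j}}\big)$ so that the $\nabla(v_{k,j}^{(i)}+\phi_{k,j})$ occurring when one differentiates the exponentials is exactly what pairs with $\langle x - p_{k,j}^{(1)},\cdot\rangle$ to rebuild the left-hand side's $\nabla(v_{k,j}^{(1)}+v_{k,j}^{(2)})$ factor, while the derivative of the algebraic weight $|x-p_{k,j}^{(1)}|^{2\alpha_j}$ and of the prefactor $\tilde h_j$ split off cleanly into the $2(1+\alpha_j)$-bulk term and the $\log\tilde h_j$-correction term. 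One has to be careful that $\nabla\phi_{k,j}$ is the same in both $v_{k,j}^{(1)}$ and $v_{k,j}^{(2)}$, so it contributes to the $\log\tilde h_j + \phi_{k,j}$ correction term rather than to the principal pairing; writing everything in terms of $v_{k,j}^{(i)} + \phi_{k,j} = u_k^{(i)}$ and tracking where the ``$+\phi_{k,j}$'' lands is the delicate point. The singular weight $|x - p_{k,j}^{(1)}|^{2\alpha_j}$ with $\alpha_j > 0$ (possibly non-integer) is locally integrable and the integrations by parts are justified because $2\alpha_j > -2$ and $\varsigma_k$, $v_{k,j}^{(i)}$ are $C^2$ away from $p_{k,j}^{(1)}$ with controlled behavior at the center; no principal-value subtlety arises. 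Once the identity is assembled the proof is complete — this lemma is purely an exact identity, with no estimation involved (the estimates come in the subsequent lemmas that evaluate each term).
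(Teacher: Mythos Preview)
Your approach is correct and coincides with the paper's: the paper writes the bilinear divergence identity
\[
\Delta u\,\langle\nabla v,x-p\rangle + \Delta v\,\langle\nabla u,x-p\rangle
= {\rm div}\Big(\nabla u\,\langle\nabla v,x-p\rangle + \nabla v\,\langle\nabla u,x-p\rangle - (\nabla u\cdot\nabla v)(x-p)\Big)
\]
with $u=v_{k,j}^{(1)}-v_{k,j}^{(2)}$, $v=v_{k,j}^{(1)}+v_{k,j}^{(2)}$, substitutes the equations for \emph{both} $\Delta(v_{k,j}^{(1)}\pm v_{k,j}^{(2)})$, and then uses the algebraic collapse $(g_1-g_2)\nabla(v^{(1)}+v^{(2)})+(g_1+g_2)\nabla(v^{(1)}-v^{(2)})=2(g_1\nabla v^{(1)}-g_2\nabla v^{(2)})$ together with ${\rm div}\big(g_i(x-p)\big)=g_i\nabla v^{(i)}\cdot(x-p)+g_i\nabla(\log\tilde h_j+\phi_{k,j})\cdot(x-p)+2(1+\alpha_j)g_i$ to produce exactly the three right-hand terms. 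One point to tighten in your write-up: you describe handling only the bulk term $\int W_k\,\langle x-p,\nabla(v^{(1)}+v^{(2)})\rangle$, but the symmetric identity also contributes $\int\Delta(v^{(1)}+v^{(2)})\,\langle x-p,\nabla\varsigma_k\rangle$, and it is the \emph{sum} of these two that collapses via the algebra above into the form where the divergence trick gives the clean $2(1+\alpha_j)$ factor (the $2$ coming from ${\rm div}(x-p)=2$ in $\mathbb{R}^2$, not from the bubble profile).
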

This Pohozaev identity has been used in \cite{bart-4} and \cite{lin-yan-uniq}, we include the proof for the convenience of the readers.
\begin{proof}[\textbf{Proof}]
	
	First we observe that for any two smooth functions $u$ and $v$,
	\begin{align}\label{divergence}
	\begin{split}
	&\Delta u\{\nabla v\cdotp (x-p_{k,j}^{(1)})\}+\Delta u\{\nabla v\cdotp (x-p_{k,j}^{(1)})\} \\
	=\,&{\rm div}\Big\{\nabla u\big[\nabla v\cdotp(x-p_{k,j}^{(1)})\big]+\nabla v\big[\nabla u\cdotp(x-p_{k,j}^{(1)})\big] - \nabla u\cdotp\nabla v (x-p_{k,j}^{(1)})\Big\}.
	\end{split}
	\end{align}
	
Replacing $u$, $v$ by $v_{k,j}^{(1)}- v_{k,j}^{(2)}$ and $v_{k,j}^{(1)}+ v_{k,j}^{(2)}$ respectively in (\ref{divergence}), we have
	\allowdisplaybreaks
	\begin{align}\label{div-formula-1}
	\begin{split}
	&\Delta (v_{k,j}^{(1)}- v_{k,j}^{(2)})\big\{\nabla (v_{k,j}^{(1)}+ v_{k,j}^{(2)})\cdotp (x-p_{k,j}^{(1)})\big\} +\Delta (v_{k,j}^{(1)}+ v_{k,j}^{(2)})\big\{\nabla (v_{k,j}^{(1)}- v_{k,j}^{(2)})\cdotp (x-p_{k,j}^{(1)})\big\} \\
	=&{\rm div}\Big\{\nabla (v_{k,j}^{(1)}- v_{k,j}^{(2)})\big[\nabla (v_{k,j}^{(1)}+ v_{k,j}^{(2)})\cdotp(x-p_{k,j}^{(1)})\big] \\
	+&\nabla (v_{k,j}^{(1)}+ v_{k,j}^{(2)})\big[\nabla (v_{k,j}^{(1)}- v_{k,j}^{(2)})\cdotp(x-p_{k,j}^{(1)})\big]-\nabla (v_{k,j}^{(1)}- v_{k,j}^{(2)})\cdotp\nabla (v_{k,j}^{(1)}+ v_{k,j}^{(2)})(x-p_{k,j}^{(1)})\Big\}.
	\end{split}
	\end{align}
	By the definition of $v_{k,j}^{(i)}$, we see that, for $x\in B(p_{k,j}^{(1)},r_0)$,
	\begin{align}\label{v-kj-equ}
	\Delta(v_{k,j}^{(1)}\pm v_{k,j}^{(2)})+\rho_ke^{\phi_{k,j}}\tilde{h}_j|x-p_{k,j}^{(1)}|^{2\alpha_j}(e^{u_k^{(1)}}\pm e^{u_k^{(2)}})=0.
	\end{align}
	Using (\ref{v-kj-equ}) and
 $$g_i=e^{v_{k,j}^{(i)}+\phi_{k,j}+\log \tilde{h}_j+2\alpha_j\log|x-p_{k,j}^{(1)}|}$$
 the right hand side (RHS) of (\ref{div-formula-1}) can be written as:
	\begin{align*}
	&{\rm (RHS)}\ {\rm of}\ (\ref{div-formula-1})\\
	=&-\rho_k(g_1-g_2)\big\{\nabla (v_{k,j}^{(1)}+ v_{k,j}^{(2)})\cdotp (x-p_{k,j}^{(1)})\big\} -\rho_k(g_1+g_2)\big\{\nabla (v_{k,j}^{(1)}- v_{k,j}^{(2)})\cdotp (x-p_{k,j}^{(1)})\big\}\\
	=&-2\rho_kg_1\big\{\nabla v_{k,j}^{(1)}\cdotp (x-p_{k,j}^{(1)})\big\}+2\rho_kg_2\big\{\nabla v_{k,j}^{(2)}\cdotp (x-p_{k,j}^{(1)})\big\}\\
	=&-{\rm div}\big\{2\rho_k(g_1+g_2) (x-p_{k,j}^{(1)})\big\}+2\rho_k(g_1-g_2)\big\{\nabla (\log \tilde{h}_j+\phi_{k,j})\cdotp (x-p_{k,j}^{(1)})\big\}\\
	&+2\rho_k(g_1-g_2)\big\{2\alpha_j\nabla \log |x-p_{k,j}^{(1)}|\cdotp (x-p_{k,j}^{(1)})\big\}+4\rho_k(g_1-g_2)\\
	=&-{\rm div}\big\{2\rho_k(g_1+g_2) (x-p_{k,j}^{(1)})\big\}+4(1+\alpha_j)\rho_k(g_1-g_2)\\
	&+2\rho_k(g_1-g_2)\big\{\nabla (\log \tilde{h}_j+\phi_{k,j})\cdotp (x-p_{k,j}^{(1)})\big\}.
	\end{align*}
	 Since $\varsigma_k=\frac{v_{k,j}^{(1)} -v_{k,j}^{(2)}}{\parallel v_{k,j}^{(1)}-v_{k,j}^{(2)} \parallel_{L^{\infty}(M)}}$ and $\nu=\frac{x-p_{k,j}^{(1)}}{r}$, we have
	\begin{align}\label{div-formula-2}
	\begin{split}
	&\int_{\partial B(p_{k,j}^{(1)},r)}\frac{{\rm (RHS)}\ {\rm of}\  (\ref{div-formula-1})}{\parallel v_{k,j}^{(1)}-v_{k,j}^{(2)} \parallel_{L^{\infty}(M)}}{\rm d}\sigma\\
	=&-2\int_{\partial B(p_{k,j}^{(1)},r)}\frac{r\rho_k\tilde{h}_j|x-p_{k,j}^{(1)}|^{2\alpha_j}\big(e^{v_{k,j}^{(1)}+\phi_{k,j}}+e^{v_{k,j}^{(2)}+\phi_{k,j}}\big)}{\parallel v_{k,j}^{(1)}-v_{k,j}^{(2)} \parallel_{L^{\infty}(M)} } {\rm d}\sigma\\
	&  + 4(1+\alpha_j)\int_{B(p_{k,j}^{(1)},r)} \frac{\rho_k\tilde{h}_j|x-p_{k,j}^{(1)}|^{2\alpha_j}\big(e^{v_{k,j}^{(1)}+\phi_{k,j}}-e^{v_{k,j}^{(2)}+\phi_{k,j}}\big)}{\parallel v_{k,j}^{(1)}-v_{k,j}^{(2)} \parallel_{L^{\infty}(M)}} {\rm d}x\\
	&  +2\int_{B(p_{k,j}^{(1)},r)} \frac{\rho_k\tilde{h}_j|x-p_{k,j}^{(1)}|^{2\alpha_j}\big(e^{v_{k,j}^{(1)}+\phi_{k,j}}-e^{v_{k,j}^{(2)}+\phi_{k,j}}\big)}{\parallel v_{k,j}^{(1)}-v_{k,j}^{(2)} \parallel_{L^{\infty}(M)}} \langle \nabla\big(\log \tilde{h}_j+\phi_{k,j}\big),x-p_{k,j}^{(1)}\rangle {\rm d}x.
	\end{split}
	\end{align}
	On the other hand,
	\begin{align}\label{div-formula-3}
	\begin{split}
	&\int_{\partial B(p_{k,j}^{(1)},r)}\frac{{\rm (LHS)}\ {\rm of}\  (\ref{div-formula-1})}{\parallel v_{k,j}^{(1)}-v_{k,j}^{(2)} \parallel_{L^{\infty}(M)}}{\rm d}\sigma\\
	=&-\int_{\partial B(p_{k,j}^{(1)},r)}r\langle \nabla v_{k,j}^{(1)}+\nabla v_{k,j}^{(2)},\nabla \varsigma_k\rangle{\rm d}\sigma \\
	&+2\int_{\partial B(p_{k,j}^{(1)},r)}r\langle\nu,\nabla v_{k,j}^{(1)}+\nabla v_{k,j}^{(2)}\rangle \langle\nu,\nabla \varsigma_k\rangle {\rm d}\sigma
	\end{split}
	\end{align}
	Then (\ref{PI-1}) follows from (\ref{div-formula-1}), (\ref{div-formula-2}) and (\ref{div-formula-3}).
	
\end{proof}

\begin{rem}
	It is easy to see Pohozaev-type identity (\ref{PI-1}) also holds for $\alpha_j>-1$.
\end{rem}

\begin{lem}\label{lem-PI1-left}

For all $1\leq j\leq m$,
	\begin{equation}\label{PI-1-l}
	{\rm (LHS)}\ {\rm of}\  (\ref{PI-1})=-4(1+\alpha_j)A_{k,j}+O(e^{-\frac{\lambda_{k,j}^{(1)}}{1+\alpha_1}}\sum_{l=1}^{m}|A_{k,l}|)+o(e^{-\frac{\lambda_{k,j}^{(1)}}{1+\alpha_1}}).
	\end{equation}
\end{lem}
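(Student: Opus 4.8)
The left-hand side of (\ref{PI-1}) is a boundary integral over $\partial B(p_{k,j}^{(1)},r)$ involving $\nabla v_{k,j}^{(1)}+\nabla v_{k,j}^{(2)}$ and $\nabla\varsigma_k$. The plan is to choose $r$ in the intermediate regime, say $r=\theta$ for a fixed small $\theta\in(0,r_0)$ (or more precisely to note that the identity is $r$-independent once $r$ is bounded away from $0$, so we may evaluate at such a $\theta$), and then plug in the two asymptotic expansions already available: Lemma \ref{lem-Dv-kj} gives
\[
\nabla v_{k,j}^{(i)}(x)=-4(1+\alpha_j)\frac{x-p_{k,j}^{(1)}}{|x-p_{k,j}^{(1)}|^2}+O\big(e^{-\lambda_{k,j}^{(1)}/(1+\alpha_1)}\big)
\]
on $\partial B(p_{k,j}^{(1)},\theta)$, and (\ref{Dsigma_k-1}) from Lemma \ref{lem-C1-est} gives
\[
\nabla\varsigma_k(x)=\sum_{l=\tau+1}^m A_{k,l}\nabla_xG(p_{k,l}^{(1)},x)+o\big(e^{-\lambda_{k,1}^{(1)}/(2(1+\alpha_1))}\big)
\]
on the same sphere. (We also have (\ref{GRF-est-1}) available should we need $\varsigma_k$ itself rather than its gradient.)

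\textbf{Key steps.} First I would substitute these expansions into the two boundary integrals on the left of (\ref{PI-1}). The dominant contribution comes from pairing the singular main term $-4(1+\alpha_j)\frac{x-p_{k,j}^{(1)}}{|x-p_{k,j}^{(1)}|^2}=-4(1+\alpha_j)\nu/r$ of $\nabla v_{k,j}^{(i)}$ (summed over $i=1,2$, giving a factor $-8(1+\alpha_j)\nu/r$) against $\nabla\varsigma_k$. On $\partial B(p_{k,j}^{(1)},r)$ the first integrand becomes $\tfrac12 r\langle -8(1+\alpha_j)\nu/r,\nabla\varsigma_k\rangle=-4(1+\alpha_j)\langle\nu,\nabla\varsigma_k\rangle$, and the second becomes $-r\langle\nu,-8(1+\alpha_j)\nu/r\rangle\langle\nu,\nabla\varsigma_k\rangle=8(1+\alpha_j)\langle\nu,\nabla\varsigma_k\rangle$; adding, the net coefficient is $4(1+\alpha_j)\langle\nu,\nabla\varsigma_k\rangle$ with an overall sign giving $-4(1+\alpha_j)\int_{\partial B}\langle\nu,\nabla\varsigma_k\rangle\,d\sigma$ — I would track this sign carefully against the statement. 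Next, $\int_{\partial B(p_{k,j}^{(1)},r)}\langle\nu,\nabla\varsigma_k\rangle\,d\sigma=\int_{\partial B}\partial_\nu\varsigma_k\,d\sigma$ which, by the divergence theorem applied to equation (\ref{sigma-equ}) for $\varsigma_k$, equals $-\int_{B(p_{k,j}^{(1)},r)}\rho_kHc_k\varsigma_k\,d\mu=\int_{B(p_{k,j}^{(1)},r)}f_k\,d\mu$; since $B(p_j,3r_0)\subset M_j$ and $f_k$ is concentrated near $p_{k,j}^{(1)}$ up to exponentially small errors (using (\ref{eta2})–(\ref{eta3}) and the closeness of $u_k^{(i)}$ to standard bubbles), this is $A_{k,j}+o(e^{-\lambda_{k,j}^{(1)}/(1+\alpha_1)})$ by the definition (\ref{A-kj}) of $A_{k,j}$. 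This yields the main term $-4(1+\alpha_j)A_{k,j}$.

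\textbf{Error terms.} The remaining step is bookkeeping on the cross terms: (a) the $O(e^{-\lambda_{k,j}^{(1)}/(1+\alpha_1)})$ correction in $\nabla v_{k,j}^{(i)}$ paired with the main part $\sum_l A_{k,l}\nabla_x G(p_{k,l}^{(1)},x)$ of $\nabla\varsigma_k$ produces $O(e^{-\lambda_{k,j}^{(1)}/(1+\alpha_1)}\sum_{l}|A_{k,l}|)$ on the sphere of fixed radius $\theta$ (the Green's function gradients are bounded there since $x$ is away from all $p_{k,l}^{(1)}$); (b) the singular main part of $\nabla v_{k,j}^{(i)}$ paired with the $o(e^{-\lambda_{k,1}^{(1)}/(2(1+\alpha_1))})$ error in $\nabla\varsigma_k$ gives $o(e^{-\lambda_{k,1}^{(1)}/(2(1+\alpha_1))})$, which is absorbed into $o(e^{-\lambda_{k,j}^{(1)}/(1+\alpha_1)})$ after using (\ref{lam-est-2}) and the ordering $\alpha_1\ge\alpha_j$; (c) the product of the two error terms is of even higher order. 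Collecting (a)–(c) gives exactly the stated remainder $O(e^{-\lambda_{k,j}^{(1)}/(1+\alpha_1)}\sum_{l=1}^m|A_{k,l}|)+o(e^{-\lambda_{k,j}^{(1)}/(1+\alpha_1)})$. The main obstacle I anticipate is purely the careful matching of exponents: one must verify that every cross term genuinely decays at least as fast as the claimed $o(e^{-\lambda_{k,j}^{(1)}/(1+\alpha_1)})$, which requires using the relations (\ref{lam-est-1})–(\ref{lam-est-2}) between the $\lambda$'s at different blowup points and between the two sequences, together with the fact that in (\ref{Dsigma_k-1}) the sum over $l$ starts at $\tau+1$ while the $o(\cdot)$ there is measured against $\lambda_{k,1}^{(1)}$; keeping the signs straight in the Pohozaev identity is the other delicate point.
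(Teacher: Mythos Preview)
Your core idea---reduce the LHS to $4(1+\alpha_j)\int_{\partial B_r}\partial_\nu\varsigma_k\,d\sigma$ plus cross terms, then evaluate that flux by the divergence theorem and the equation $\Delta_g\varsigma_k=-f_k$---is correct and in fact more direct than the paper's route. Integrating $\Delta\varsigma_k=-e^{\phi_j}f_k$ over $B(p_{k,j}^{(1)},r)$ gives $\int_{\partial B_r}\partial_\nu\varsigma_k=-\int_{B_r}f_k\,d\mu=-A_{k,j}+O(e^{-\lambda_{k,j}^{(1)}})$, since on $M_j\setminus B_r$ one has $f_k=O(e^{\bar u_k})=O(e^{-\lambda_{k,j}^{(1)}})$ by (\ref{uk-ave-1}). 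This yields the main term $-4(1+\alpha_j)A_{k,j}$ with an error far inside $o(e^{-\lambda/(1+\alpha_1)})$. The paper instead expands $\varsigma_k$ via the Green's representation to second order (introducing auxiliary moments $B_{k,l,h}$, $C_{k,l,h,i}$), replaces $\nabla\varsigma_k$ by $\nabla\bar G_k$ up to $o(e^{-\lambda/(1+\alpha_1)})$, uses the global cancellation $\sum_l A_{k,l}=0$ to make $\bar G_k$ harmonic in the annulus, and then shrinks the radius to $0$ to isolate the $-A_{k,j}$ contribution while showing the $B$- and $C$-terms vanish via explicit spherical integrals. Your shortcut avoids all of that machinery.

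Where your write-up goes wrong is step (b) of the error bookkeeping. Once you have paired the main part of $\nabla v_{k,j}^{(i)}$ with \emph{all} of $\nabla\varsigma_k$ via the divergence theorem, there is no residual ``main $\nabla v$ against error of $\nabla\varsigma_k$'' term to treat---that cross term is already inside the exact flux $\int_{\partial B_r}\partial_\nu\varsigma_k$. So (b) is double-counting. Worse, the absorption claim you make there is backwards: $o(e^{-\lambda/(2(1+\alpha_1))})$ is \emph{larger} than $o(e^{-\lambda/(1+\alpha_1)})$, not smaller, so it cannot be absorbed. Fortunately you do not need it. The only genuine cross term is (error of $\nabla v$)$\times\nabla\varsigma_k$, and bounding $\|\nabla\varsigma_k\|_{L^\infty(\partial B_\theta)}$ by (\ref{Dsigma_k-1}) gives
\[
O\big(e^{-\lambda_{k,j}^{(1)}/(1+\alpha_1)}\big)\Big(O\big(\textstyle\sum_l|A_{k,l}|\big)+o\big(e^{-\lambda_{k,1}^{(1)}/(2(1+\alpha_1))}\big)\Big),
\]
which is exactly the stated remainder (the product of the two small factors is $o(e^{-3\lambda/(2(1+\alpha_1))})$, safely inside $o(e^{-\lambda/(1+\alpha_1)})$). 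Finally, track the signs once: the two boundary integrals combine to $+4(1+\alpha_j)\int\partial_\nu\varsigma_k$, and the flux itself is $-A_{k,j}+O(e^{-\lambda})$; the two minus signs you inserted along the way cancel, but as written your intermediate expressions are inconsistent.
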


\begin{proof}[\textbf{Proof}]
	
	From (\ref{Dv-kj-est}) and (\ref{Dsigma_k-1}), we find that
	\begin{align*}
	&{\rm (LHS)}\ {\rm of}\  (\ref{PI-1})\\
	=&\,4(1+\alpha_j)\int_{\partial B(p_{k,j}^{(1)},r)}\langle\nu,D\varsigma_k\rangle{\rm d}\sigma+O(e^{-\frac{\lambda_{k,j}^{(1)}}{1+\alpha_1}}\parallel D\varsigma_k\parallel_{L^{\infty}(\partial B(p_{k,j}^{(1)},r))})\\
	=&\,4(1+\alpha_j)\int_{\partial B(p_{k,j}^{(1)},r)}\langle\nu,D\varsigma_k\rangle{\rm d}\sigma+O(e^{-\frac{\lambda_{k,j}^{(1)}}{1+\alpha_1}}\sum_{l=1}^{m}|A_{k,l}|)+o(e^{-\frac{3}{2(1+\alpha_1)}\lambda_{k,j}^{(1)}}).
	\end{align*}
	For  $x\in\partial B(p_{k,j}^{(1)},r)$ we use the Green's representation formula to estimate $\varsigma_k(x)$:
	\begin{align*}
	&\varsigma_k(x)-\bar{\varsigma}_k=\int_{M}G(y,x)f_k(y){\rm d}\mu(y)\\ =&\sum_{l=1}^mA_{k,l}G(p_{k,l}^{(1)},x)+\sum_{l=1}^m\sum_{h=1}^2B_{k,l,h}\partial_{y_h}G(y,x)\big|_{y=p_{k,l}^{(1)}}+\frac{1}{2}\sum_{l=1}^m\sum_{h,i=1}^2C_{k,l,h,i}\partial_{y_hy_i}^2G(y,x)\big|_{y=p_{k,l}^{(1)}}\\
	&+O(1)\sum_{l=1}^m\int_{M_l}|y-p_{k,j}^{(1)}|^3f_k{\rm d}\mu(y)  \quad {\rm in}\,\ C^1\big(B(p_{k,j}^{(1)},2r_0)\setminus B(p_{k,j}^{(1)},\frac{r}{2})\big),
	\end{align*}
	where
	\begin{align*}
	&	B_{k,l,h}=\int_{M_l}(y-p_{k,l}^{(1)})_hf_k(y){\rm d}\mu(y),
	\\
	& C_{k,l,h,i}=\int_{M_l}(y-p_{k,l}^{(1)})_h(y-p_{k,l}^{(1)})_if_k(y){\rm d}\mu(y).
	\end{align*}
	
	It is easy to see that the last term is rather minor:
	\begin{align*}
	&\sum_{l=1}^m\int_{M_l}|y-p_{k,j}^{(1)}|^3f_k(y){\rm d}\mu(y)   \\ =&\sum_{l=1}^m\int_{B(p_{k,l}^{(1)},r)}\frac{e^{\lambda_{k,l}^{(1)}}|y-p_{k,l}^{(1)}|^{2\alpha_l+3}}{\big(1+e^{\lambda_{k,l}^{(1)}}|y-p_{k,l}^{(1)}|^{2(1+\alpha_l)}\big)^2}{\rm d}y +O(e^{-\lambda_{k,1}^{(1)}})  \\
	=&\sum_{l=1}^mO(e^{-\frac{3}{2(1+\alpha_l)}\lambda_{k,l}^{(1)}})\int_{|z|<\frac{r}{\epsilon_{k,l}}}\frac{|z|^{2\alpha_l+3}}{(1+|z|^{2(1+\alpha_l)})^2}{\rm d}z+O(e^{-\lambda_{k,1}^{(1)}})  \\
	=&o(e^{-\frac{\lambda_{k,1}^{(1)}}{1+\alpha_1}}).
	\end{align*}
Setting
	\begin{align*}
	\bar{G}_k(x)=&\bar{\varsigma}_k(x)+\sum_{j=1}^m A_{k,j}G(p_{k,j}^{(1)},x)+\sum_{l=1}^m\sum_{h=1}^2B_{k,l,h}\partial_{y_h}G(y,x)\big|_{y=p_{k,l}^{(1)}}\\
	&+\frac{1}{2}\sum_{l=1}^m\sum_{h,i=1}^2C_{k,l,h,i}\partial_{y_hy_i}^2G(y,x)\big|_{y=p_{k,l}^{(1)}},
	\end{align*}
we now have
	\begin{equation*}
		\nabla\varsigma_k(x)-\nabla\bar{G}_k(x)=o(e^{-\frac{\lambda_{k,1}^{(1)}}{1+\alpha_1}}).
	\end{equation*}
	Thus
	\begin{align}\label{pi-l-1}
	\begin{split}
	&{\rm (LHS)}\ {\rm of}\  (\ref{PI-1})\\
	=&\,4(1+\alpha_j)\int_{\partial B(p_{k,j}^{(1)},r)}\langle\nu,\nabla\bar{G}_k\rangle{\rm d}\sigma+O(e^{-\frac{\lambda_{k,j}^{(1)}}{1+\alpha_1}}\sum_{l=1}^{m}|A_{k,l}|)+o(e^{-\frac{\lambda_{k,j}^{(1)}}{1+\alpha_1}}).
	\end{split}
	\end{align}
	Now we take the global cancellation property into consideration: for any fixed $\theta\in(0,r)$,
	\begin{equation}\label{Gbar-equ}
	\Delta \bar{G}_k=\sum_{l=1}^{m}A_{k,l}=\int_{M}f_k\,{\rm d}\mu=0,\quad {\rm in}\ \;B(p_{k,j}^{(1)},2r_0)\setminus B(p_{k,j}^{(1)},\theta).
	\end{equation}
	Using (\ref{G-phi-equ}) (\ref{Gbar-equ}) and (\ref{divergence}), we have
	\begin{align*}
	0=&\int_{B_r\setminus B_{\theta}} \Big\{\Delta\bar{G}_k\big\{\nabla(\tilde{G}_k-\phi_{k,j})\cdotp (x-p_{k,j}^{(1)})\big\}+\Delta(\tilde{G}_k-\phi_{k,j})\big\{\nabla \bar{G}_k\cdotp (x-p_{k,j}^{(1)})\big\}\Big\}{\rm d}x\\
	=&-4\pi(1+\alpha_j)\int_{\partial(B_r\setminus B_{\theta})}\frac{\partial\bar{G}_k}{\partial\nu} \,{\rm d}\sigma,
	\end{align*}
	where $B(p_{k,j}^{(1)},r)$ and $B(p_{k,j}^{(1)},\theta)$ are replaced by $B_r$, $B_{\theta}$  respectively for simplicity. Therefore
	\begin{equation}\label{pi-l-2}
	\int_{\partial B_r}\frac{\partial\bar{G}_k}{\partial\nu} {\rm d}\sigma=\int_{\partial B_{\theta}}\frac{\partial\bar{G}_k}{\partial\nu} {\rm d}\sigma.
	\end{equation}
	Further direct computation yields
	\begin{align}\label{pi-l-3}
	\begin{split}
	&\int_{\partial B_{\theta}}\langle\nu,\sum_{l=1}^{m}A_{k,l}\nabla_xG(p_{k,l}^{(1)},x)\rangle{\rm d}\sigma\\
	=& -A_{k,j}\int_{\partial B_{\theta}}\langle\nu,\nabla_xG(p_{k,l}^{(1)},x)\rangle{\rm d}\sigma+o_{\theta}(1)\\
	=& -A_{k,j}\int_{\partial B_{\theta}}\langle\nu,\nabla_x\frac{1}{2\pi}\log |x-p_{k,l}^{(1)}|\rangle{\rm d}\sigma+o_{\theta}(1)  \\
	=&-A_{k,j}+o_{\theta}(1),
	\end{split}
	\end{align}
	where $\lim_{\theta\to 0}o_{\theta}(1)=0$, and we have used the fact that all the terms related to $l\neq j$ are minor. Let us observe that
	\begin{equation}\label{pi-l-4}
	\begin{split}
	&\int_{\partial B(0,\theta)}\langle\nu,\nabla_x\partial_{y_h}\log|z|\rangle{\rm d}\sigma=-\int_{\partial B(0,\theta)}\sum_{i=1}^2\frac{z_i}{|z|}\frac{\delta_{ih}|z|^2-2z_iz_h}{z^4}{\rm d}\sigma=0, \\
	&\int_{\partial B(0,\theta)}\langle\nu,\nabla_x\frac{\partial^2}{\partial y_h^2}\log|z|\rangle{\rm d}\sigma=-\int_{\partial B(0,\theta)}\big(\frac{2}{|z|^3}-\frac{4z_h^2}{|z|^5}\big){\rm d}\sigma=0, \\
	&\int_{\partial B(0,\theta)}\langle\nu,\nabla_x\frac{\partial^2}{\partial y_hy_i}\log|z|\rangle{\rm d}\sigma=-\int_{\partial B(0,\theta)}\big(\frac{4z_hz_i}{|z|^5}-\frac{8z_hz_i}{|z|^5}\big){\rm d}\sigma=0,
	\end{split}
	\end{equation}
	
	Obviously, from (\ref{pi-l-2})$\sim$(\ref{pi-l-4}) we can see that
	\begin{equation*}
		\int_{\partial B(p_{k,j}^{(1)},r)}\langle \nu,\nabla\bar{G}_k\rangle{\rm d}\sigma=-A_{k,j}+o_{\theta}(1).
	\end{equation*}
	which together with (\ref{pi-l-1}), concludes the proof of Lemma \ref{lem-PI1-left}.
	
\end{proof}

\begin{lem}\label{lem-PI1-right}
	\begin{equation}\label{PI-1-r-1}
	\begin{split}
	&{\rm (RHS)}\ {\rm of}\  (\ref{PI-1})\\
	=&-2(1+\alpha_j)A_{k,j}-\frac{4\pi^2\big[\Delta \log h(p_j)+\rho_*-N^*-2K(p_j)\big]}{\big(\rho_kh_j(p_{k,j}^{(1)})\big)^{\frac{1}{1+\alpha_1}}(1+\alpha_1)\sin \frac{\pi}{1+\alpha_1}}b_0e^{-\frac{\lambda_{k,j}^{(1)}}{1+\alpha_1}}\\
	&+o(e^{-\frac{\lambda_{k,j}^{(1)}}{1+\alpha_1}}),\qquad 1\leq j\leq t.
	\end{split}
	\end{equation}
	\begin{align}
	&{\rm (RHS)}\ {\rm of}\  (\ref{PI-1})=-2(1+\alpha_j)A_{k,j}+o(e^{-\frac{\lambda_{k,j}^{(1)}}{2(1+\alpha_1)}}),\quad t+1\leq j\leq \tau. \label{PI-1-r-2} \\
	&{\rm (RHS)}\ {\rm of}\  (\ref{PI-1})=-2(1+\alpha_j)A_{k,j}+o(e^{-\frac{\lambda_{k,j}^{(1)}}{1+\alpha_1}}),\quad \tau+1\leq j\leq m.    \label{PI-1-r-3}
	\end{align}
\end{lem}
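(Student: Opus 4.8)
The plan is to treat the three pieces of the right-hand side of (\ref{PI-1}) separately. First note that, because $v_{k,j}^{(i)}+\phi_{k,j}=u_k^{(i)}$ and $\parallel v_{k,j}^{(1)}-v_{k,j}^{(2)}\parallel_{L^\infty(M)}=\parallel u_k^{(1)}-u_k^{(2)}\parallel_{L^\infty(M)}$, and because $\rho_k\tilde h_j|x-p_{k,j}^{(1)}|^{2\alpha_j}=e^{\phi_j}\rho_kH$ near $p_j$ (using $p_{k,j}^{(1)}=p_j=q_j$ when $1\le j\le\tau$, and $\alpha_j=0$, $h_j=H$ when $j>\tau$), the integrand appearing in all three pieces is precisely $e^{\phi_j}f_k$. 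Since ${\rm d}\mu=e^{\phi_j}{\rm d}x$ near $p_j$, the boundary piece $\mathrm{I}$ equals $r\int_{\partial B(p_{k,j}^{(1)},r)}e^{\phi_j}f_k{\rm d}\sigma$, the piece $\mathrm{II}$ carrying the factor $2(1+\alpha_j)$ equals $-2(1+\alpha_j)\int_{B(p_{k,j}^{(1)},r)}f_k{\rm d}\mu$, and the remaining piece $\mathrm{III}$ equals $-\int_{B(p_{k,j}^{(1)},r)}f_k\langle\nabla(\log\tilde h_j+\phi_{k,j})(x),x-p_{k,j}^{(1)}\rangle{\rm d}\mu$. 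On $\partial B(p_{k,j}^{(1)},r)$ the standard bubble estimate (\ref{standard-bubble}) gives $e^{u_k^{(i)}}=O(e^{-\lambda_{k,j}^{(i)}})$, so with $|\varsigma_k|\le1$ and (\ref{c-est}) we get $\mathrm{I}=O(e^{-\lambda_{k,j}^{(1)}})$, which (using $\alpha_1>0$) is $o$ of the claimed error in all three cases. For $\mathrm{II}$, the same decay bounds $\int_{M_j\setminus B(p_{k,j}^{(1)},r)}|f_k|{\rm d}\mu=O(e^{-\lambda_{k,j}^{(1)}})$ (away from $p_j$, $u_k^{(i)}$ is controlled by $\bar u_k^{(i)}\approx-\lambda_{k,1}^{(i)}$), hence $\mathrm{II}=-2(1+\alpha_j)A_{k,j}+O(e^{-\lambda_{k,j}^{(1)}})$.

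Everything therefore reduces to $\mathrm{III}$. Here I Taylor-expand $\langle\nabla(\log\tilde h_j+\phi_{k,j})(x),x-p_{k,j}^{(1)}\rangle$ around $p_{k,j}^{(1)}$ into a linear term $\langle\nabla(\log\tilde h_j+\phi_{k,j})(p_{k,j}^{(1)}),x-p_{k,j}^{(1)}\rangle$, a quadratic term $\langle D^2(\log\tilde h_j+\phi_{k,j})(p_{k,j}^{(1)})(x-p_{k,j}^{(1)}),x-p_{k,j}^{(1)}\rangle$, and an $O(|x-p_{k,j}^{(1)}|^3)$ remainder, and I also expand $e^{\phi_j}$ around $p_j$, where $\phi_j$ and $\nabla\phi_j$ vanish. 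The cubic remainder, integrated against $|f_k|$, is $o(e^{-\lambda_{k,j}^{(1)}/(1+\alpha_1)})$ by the moment estimate already carried out for the $|y-p_{k,j}^{(1)}|^3f_k$ term in the proof of Lemma \ref{lem-PI1-left}. The linear term is disposed of regime by regime: after the rescaling $x=\epsilon_{k,j}z+p_{k,j}^{(1)}$ its leading part is an odd integral against the radial limit $b_0\varphi_{j,0}$ when $1\le j\le\tau$ (Lemma \ref{lem-limit-1}(i)), so it is $o(\epsilon_{k,j})$; moreover for $1\le j\le t$ the coefficient $\nabla(\log\tilde h_j+\phi_{k,j})(p_{k,j}^{(1)})$ is itself $O(e^{-\lambda_{k,1}^{(1)}/(1+\alpha_1)})$, as follows from $D(\mathbf{p})=0$ and Theorem \ref{Theorem chen-lin2} (which quantifies the $\rho_{k,l}^{(1)}$- and $\rho_k$-coefficient errors against $G_j^*$), and for $\tau+1\le j\le m$ that coefficient is $O(e^{-\lambda_{k,j}^{(1)}/(1+\alpha_1)})$ by (\ref{first-deriv-est}); combined with $\epsilon_{k,j}=O(e^{-\lambda_{k,j}^{(1)}/(2(1+\alpha_j))})$ and the inequality $\alpha_j<\alpha_1$ for $t<j\le\tau$, the linear term is $o$ of the claimed error in every case.

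Thus $\mathrm{III}$ equals, up to the allowed errors, the quadratic term $-\int_{B(p_{k,j}^{(1)},r)}f_k\langle D^2(\log\tilde h_j+\phi_{k,j})(p_{k,j}^{(1)})(x-p_{k,j}^{(1)}),x-p_{k,j}^{(1)}\rangle{\rm d}\mu$. Rescaling by $\epsilon_{k,j}$ and using (\ref{c-est}), (\ref{eta1})--(\ref{eta3}) to write $c_k=e^{U_{k,j}^{(1)}}(1+o(1))$ turns this into $-8(1+\alpha_j)^2\epsilon_{k,j}^2\int\frac{|z|^{2\alpha_j}\langle D^2(\log\tilde h_j+\phi_{k,j})(p_{k,j}^{(1)})z,z\rangle}{(1+|z|^{2(1+\alpha_j)})^2}\varsigma_{k,j}(z){\rm d}z\,(1+o(1))$. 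When $1\le j\le\tau$ the integrand is dominated by an integrable majorant ($\sim|z|^{-2-2\alpha_j}$ at infinity), so by Lemma \ref{lem-limit-1}(i) and dominated convergence I may replace $\varsigma_{k,j}$ by $b_0\varphi_{j,0}$; the symmetry $\int z_az_bg(|z|){\rm d}z=\tfrac12\delta_{ab}\int|z|^2g(|z|){\rm d}z$ then collapses the quadratic form to $\tfrac12\Delta(\log\tilde h_j+\phi_{k,j})(p_{k,j}^{(1)})$ times a one-dimensional integral evaluated by the Beta function. Identifying $\Delta(\log\tilde h_j+\phi_{k,j})(p_j)=\Delta\log h(p_j)+\rho_*-N^*-2K(p_j)+o(1)$ — the same computation (Laplacians of the Robin function, of the conformal factor $\phi_j$, and of the factors $e^{-4\pi\alpha_lG(\cdot,q_l)}$ in $H$) that underlies the definition of $d_j$ in Theorem \ref{Theorem zhang} — and recalling $\epsilon_{k,j}^2=(8(1+\alpha_1)^2/(\rho_kh_j(p_{k,j}^{(1)})))^{1/(1+\alpha_1)}e^{-\lambda_{k,j}^{(1)}/(1+\alpha_1)}$ for $j\le t$ together with $b_{j,0}=b_0$ (Lemma \ref{lem-limit-2}), the bookkeeping yields (\ref{PI-1-r-1}). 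For $t+1\le j\le\tau$ the same quadratic term is $O(\epsilon_{k,j}^2)=o(e^{-\lambda_{k,j}^{(1)}/(2(1+\alpha_1))})$ because $\alpha_j<\alpha_1$, and for $\tau+1\le j\le m$ it is $O(\lambda_{k,j}^{(1)}e^{-\lambda_{k,j}^{(1)}})=o(e^{-\lambda_{k,j}^{(1)}/(1+\alpha_1)})$ because $\alpha_1>0$; hence $\mathrm{III}$ is negligible in those two regimes and (\ref{PI-1-r-2}), (\ref{PI-1-r-3}) follow.

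The main obstacle is the evaluation of the quadratic term in the case $1\le j\le t$: one must (i) upgrade the $C_{loc}$-convergence $\varsigma_{k,j}\to b_0\varphi_{j,0}$ to convergence of the integral over the expanding domain $\{|z|<r_0/\epsilon_{k,j}\}$, which needs the integrable majorant just mentioned; (ii) keep exact track of all the prefactors ($\epsilon_{k,j}$, $\rho_kh_j(p_{k,j}^{(1)})$, $8(1+\alpha_j)^2$) through the rescaling; (iii) evaluate the resulting Beta-type integral; and (iv), most delicately, verify that the flat Laplacian $\Delta(\log\tilde h_j+\phi_{k,j})(p_j)$ coincides with the intrinsic quantity $\Delta\log h(p_j)+\rho_*-N^*-2K(p_j)$, which requires carefully unwinding the geodesic-distance versus Euclidean-distance corrections in the isothermal coordinate. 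Keeping the linear term below the stated error uniformly in $j$ — via $D(\mathbf{p})=0$, the exponent inequality $\alpha_j<\alpha_1$, and (\ref{first-deriv-est}) in the three respective ranges — is the other point requiring care.
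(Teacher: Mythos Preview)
Your proposal is correct and follows essentially the same approach as the paper: the same splitting into boundary term, the $-2(1+\alpha_j)$ term, and the gradient term $K_3$, followed by the same Taylor expansion of $\nabla(\log\tilde h_j+\phi_{k,j})$ at $p_{k,j}^{(1)}$ and case-by-case analysis using $D(\mathbf{p})=0$, (\ref{first-deriv-est}), and the rescaling $x=\epsilon_{k,j}z+p_{k,j}^{(1)}$. The ``Beta-type integral'' you allude to is precisely the identity $\int_0^\infty \frac{8(1+\alpha_j)^2 s^{2\alpha_j+3}}{(1+s^{2(1+\alpha_j)})^2}\frac{1-s^{2(1+\alpha_j)}}{1+s^{2(1+\alpha_j)}}\,{\rm d}s=-\frac{4\pi}{(1+\alpha_j)\sin\frac{\pi}{1+\alpha_j}}$ used in the paper, and your identification of $\Delta(\log\tilde h_j+\phi_{k,j})(p_j)$ with $\Delta\log h(p_j)+\rho_*-N^*-2K(p_j)$ is exactly what the paper uses.
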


\begin{proof}[\textbf{Proof}]
	
	We use $K_1,K_2,K_3$ to denote the three terms on the right hand of (\ref{PI-1}). The first two terms are quite easy to estimate:
	\begin{equation}\label{K1}
	K_1=\int_{\partial B(p_{k,j}^{(1)},r)}rf_k\,{\rm d}\sigma=\int_{\partial B(p_{k,j}^{(1)},r)}\rho_k\tilde{h}_jr^{2\alpha_j+1}e^{u_k^{(1)}}(\varsigma_k+o(1)){\rm d}\sigma=O(e^{-\lambda_{k,j}^{(1)}}),
	\end{equation}
	\begin{equation}\label{K2}
	K_2=-2(1+\alpha_j)A_{k,j}.
	\end{equation}
	More work is needed for
	\begin{equation*}
	K_3=-\int_{B(p_{k,j}^{(1)},r)}f_ke^{\phi_j}\langle \nabla(\log \tilde{h}_j+\phi_{k,j}),x-p_{k,j}^{(1)}\rangle{\rm d}x.
	\end{equation*}
	First we use $\nabla\phi_j(p_{k,j}^{(1)})=0$ to write $\nabla (\log \tilde h_j+\phi_{k,j})(x)$ as
	\begin{equation}\label{expansion}
	\begin{split}
	&\nabla(\log \tilde{h}_j+\phi_{k,j})(x)\\
	=&\nabla(\log h_j+\phi_{k,j})(p_{k,j}^{(1)})+\langle D^2(\log \tilde{h}_j+\phi_{k,j})(p_{k,j}^{(1)}),x-p_{k,j}^{(1)}\rangle+O(|x-p_{k,j}^{(1)}|^2).
	\end{split}
	\end{equation}
	Then we evaluate $K_3$ in three cases:

	$\mathbf{Case\ 1:}$ $1\leq j\leq t$ $(\alpha_j=\alpha_1)$. The assumption $D(\mathbf{p})=0$ and (\ref{phi-kj}) (\ref{rho-k}) imply
	\begin{equation*}
		\nabla(\log h_j+\phi_{k,j})(p_{k,j}^{(1)})=O(e^{-\frac{\lambda_{k,j}^{(1)}}{1+\alpha_1}}).
	\end{equation*}
	Thus, after the scaling $x=\epsilon_{k,j}z+p_{k,j}^{(1)}$, the first order term can be estimated as follows:
	\begin{equation}\label{K3-first-1}
	\int_{B(p_{k,j}^{(1)},r)}f_ke^{\phi_j}\langle \nabla(\log h_j+\phi_{k,j})(p_{k,j}^{(1)}),x-p_{k,j}^{(1)}\rangle{\rm d}x=O(e^{-(\frac{1}{2(1+\alpha_j)}+\frac{1}{1+\alpha_1})\lambda_{k,j}^{(1)}}).
	\end{equation}
	For the second order term that contains $D^2(\log\tilde{h}_j+\phi_{k,j})(p_{k,j}^{(1)})$, we have
	\begin{align*}
	&\int_{B(p_{k,j}^{(1)},r)}f_ke^{\phi_j} \langle D^2(\log \tilde{h}_j+\phi_{k,j})(p_{k,j}^{(1)})(x-p_{k,j}^{(1)}),x-p_{k,j}^{(1)}\rangle{\rm d}x  \\
	=&\int_{B(p_{k,j}^{(1)},r)}\frac{\rho_kh_j(p_{k,j}^{(1)})e^{\lambda_{k,j}^{(1)}}|x-p_{k,j}^{(1)}|^{2\alpha_j}}{\Big(1+\frac{\rho_kh_j(p_{k,j}^{(1)})}{8(1+\alpha_j)^2}e^{\lambda_{k,j}^{(1)}}|x-p_{k,j}^{(1)}|^{2(1+\alpha_j)}\Big)^2}\big(1+O(|x-p_{k,j}^{(1)}|+\epsilon_{k,j}+\epsilon_{k,1}^2)\big)  \\
	&\times\varsigma_k(x)(1+o(1))\langle D^2(\log \tilde{h}_j+\phi_{k,j})(p_{k,j}^{(1)})(x-p_{k,j}^{(1)}),x-p_{k,j}^{(1)}\rangle{\rm d}x   \\
	=&\epsilon_{k,j}^2\int_{|z|<\frac{r}{\epsilon_{k,j}}}\frac{8(1+\alpha_j)^2|z|^{2\alpha_j}}{(1+|z|^{2(1+\alpha_j)})^2}\langle D^2(\log \tilde{h}_j+\phi_{k,j})(p_{k,j}^{(1)})z,z\rangle\varsigma_{k,j}(z){\rm d}z+o(\epsilon_{k,j}^2).
	\end{align*}
	The expression above can be greatly simplified by this beautiful identity:
	\begin{equation*}
	\int_{0}^{\infty}\frac{8(1+\alpha_j)^2s^{2\alpha_j+3}}{(1+s^{2(1+\alpha_j)})^2}\frac{1-s^{2(1+\alpha_j)}}{1+s^{2(1+\alpha_j)}}{\rm d}s=-\frac{4\pi}{(1+\alpha_j)\sin\frac{\pi}{1+\alpha_j}}.
	\end{equation*}
	Consequently, Lemma \ref{lem-limit-1} and the two identities above lead to
	\begin{equation}\label{K3-second-1}
	\begin{split}
	&\int_{B(p_{k,j}^{(1)},r)}f_ke^{\phi_j} \langle D^2(\log \tilde{h}_j+\phi_{k,j})(p_{k,j}^{(1)})(x-p_{k,j}^{(1)}),x-p_{k,j}^{(1)}\rangle{\rm d}x  \\
	=&\epsilon_{k,j}^2\pi\Delta(\log \tilde{h}_j+\phi_{k,j})(p_{k,j}^{(1)})b_0\int_{0}^{\infty}\frac{8(1+\alpha_j)^2s^{2\alpha_j+3}}{(1+s^{2(1+\alpha_j)})^2}\frac{1-s^{2(1+\alpha_j)}}{1+s^{2(1+\alpha_j)}}{\rm d}s+o(\epsilon_{k,j}^2)  \\
	=&-\frac{4\pi^2\big[\Delta \log h(p_j)+\rho_*-N^*-2K(p_j)\big]}{\big(\rho_kh_j(p_{k,j}^{(1)})\big)^{\frac{1}{1+\alpha_1}}(1+\alpha_1)\sin \frac{\pi}{1+\alpha_1}}b_0e^{-\frac{\lambda_{k,j}^{(1)}}{1+\alpha_1}}+o(e^{-\frac{\lambda_{k,j}^{(1)}}{1+\alpha_1}}).
	\end{split}
	\end{equation}
	Also elementary estimate gives
	\begin{equation}\label{K3-third-1}
	\begin{split}
	&\int_{B(p_{k,j}^{(1)},r)}f_ke^{\phi_j}\langle O(|x-p_{k,j}^{(1)}|^2),x-p_{k,j}^{(1)}\rangle{\rm d}x  \\
	=&O(1)e^{-\frac{3}{2(1+\alpha_j)}\lambda_{k,j}^{(1)}}\int_{|z|<\frac{r}{\epsilon_{k,j}}}\frac{|z|^{2\alpha_j+3}}{(1+|z|^{2(1+\alpha_j)})^2}{\rm d}z \\
	=& O(1)\big(e^{-\frac{3}{2(1+\alpha_j)}\lambda_{k,j}^{(1)}}+e^{-\lambda_{k,j}^{(1)}}\big) =	o(e^{-\frac{\lambda_{k,j}^{(1)}}{1+\alpha_j}}).	
	\end{split}
	\end{equation}
	Therefore, we complete the proof of (\ref{PI-1-r-1}) by using (\ref{K1}) (\ref{K2}) and (\ref{K3-first-1})$\sim$(\ref{K3-third-1}). Note that the leading term in the second order term is ignored at this stage, since the requirement of error in the current step is still crude.
	
\smallskip
	$\mathbf{Case\ 2:}$ $t+1\leq j\leq \tau$ $(0<\alpha_j<\alpha_1)$. For the first term it is easy to see that
    \begin{equation}\label{K3-first-2}
    \int_{B(p_{k,j}^{(1)},r)}f_ke^{\phi_j} \langle \nabla(\log h_j+\phi_{k,j})(p_{k,j}^{(1)}),x-p_{k,j}^{(1)}\rangle{\rm d}x=o(e^{-\frac{\lambda_{k,j}^{(1)}}{2(1+\alpha_j)}}).
    \end{equation}
    For the second order term we have
    \begin{equation}\label{K3-second-2}
    \begin{split}
    &\int_{B(p_{k,j}^{(1)},r)}f_ke^{\phi_j} \langle D^2(\log \tilde{h}_j+\phi_{k,j})(p_{k,j}^{(1)})(x-p_{k,j}^{(1)}),x-p_{k,j}^{(1)}\rangle{\rm d}x\\
    =&O(e^{-\frac{\lambda_{k,j}^{(1)}}{1+\alpha_j}})=o(e^{-\frac{\lambda_{k,j}^{(1)}}{1+\alpha_1}}),
    \end{split}
    \end{equation}
    where we used the scaling $x=\epsilon_{k,j}z+p_{k,j}^{(1)}$ and $\alpha_j<\alpha_1$. Similar to (\ref{K3-third-1}), we know
    \begin{equation}\label{K3-third-2}
    \begin{split}
    \int_{B(p_{k,j}^{(1)},r)}f_ke^{\phi_j}\langle O(|x-p_{k,j}^{(1)}|^2),x-p_{k,j}^{(1)}\rangle{\rm d}x=o(e^{-\frac{\lambda_{k,j}^{(1)}}{1+\alpha_j}}).	
    \end{split}
    \end{equation}
    Therefore (\ref{PI-1-r-2}) follows from (\ref{K1}), (\ref{K2}) and (\ref{K3-first-2})$\sim$(\ref{K3-third-2}).

\smallskip
	$\mathbf{Case\ 3:}$ $\tau+1\leq j\leq m$ $(\alpha_j=0)$. In view of (\ref{first-deriv-est}), we get
	\begin{equation*}
		\nabla(\log h_j+\phi_{k,j})(p_{k,j}^{(1)})=\nabla(\log h+G_j^*)(p_j)+O(e^{-\frac{\lambda_{k,j}^{(1)}}{1+\alpha_1}})=O(e^{-\frac{\lambda_{k,j}^{(1)}}{1+\alpha_1}}).
	\end{equation*}
	The first order term is rather small:
	\begin{equation}\label{K3-first-3}
	\int_{B(p_{k,j}^{(1)},r)}f_ke^{\phi_j} \langle D(\log h_j+\phi_{k,j})(p_{k,j}^{(1)}),x-p_{k,j}^{(1)}\rangle{\rm d}x=O(e^{-(\frac{1}{2}+\frac{1}{1+\alpha_1})\lambda_{k,j}^{(1)}}).
	\end{equation}
	For the second order term we have
	\begin{align*}
	&\int_{B(p_{k,j}^{(1)},r)}f_ke^{\phi_j} \langle D^2(\log \tilde{h}_j+\phi_{k,j})(p_{k,j}^{(1)})(x-p_{k,j}^{(1)}),x-p_{k,j}^{(1)}\rangle{\rm d}x  \\
	=&\epsilon_{k,j}^2\int_{|z|<\frac{r}{\epsilon_{k,j}}}\frac{8}{(1+|z|^{2})^2}\langle D^2(\log \tilde{h}_j+\phi_{k,j})(p_{k,j}^{(1)})z,z\rangle \varsigma_{k,j}(z){\rm d}z+O(e^{-\lambda_{k,j}^{(1)}})  \\
	=&\epsilon_{k,j}^2\pi\Delta(\log \tilde{h}_j+\phi_{k,j})(p_{k,j}^{(1)})b_0\int_{0}^{\frac{r}{\epsilon_{k,j}}}\frac{8s^3}{(1+s^{2})^2}\frac{1-s^{2}}{1+s^{2}}{\rm d}s+O(e^{-\lambda_{k,j}^{(1)}}),
	\end{align*}
	where we have used Lemma \ref{lem-limit-1} and symmetry. It is easy to see
	\begin{equation*}
		\int_{0}^R\frac{8s^3}{(1+s^{2})^2}\frac{1-s^{2}}{1+s^{2}}{\rm d}s=-4\Big(\log (1+R^2)-\frac{1}{1+R^2}+\frac{1}{(1+R^2)^2}\Big),
	\end{equation*}
	and
	\begin{equation}\label{K3-second-3}
	\begin{split}
	\int_{B(p_{k,j}^{(1)},r)}f_ke^{\phi_j}\langle D^2(\log \tilde{h}_j+\phi_{k,j})(p_{k,j}^{(1)})(x-p_{k,j}^{(1)}),x-p_{k,j}^{(1)}\rangle{\rm d}x=O(\lambda_{k,j}^{(1)}e^{-\lambda_{k,j}^{(1)}}).
	\end{split}
	\end{equation}
	
	Finally, by scaling we immediately observe that
	\begin{equation}\label{K3-third-3}
	\begin{split}
	&\int_{B(p_{k,j}^{(1)},r)}f_ke^{\phi_j}\langle O(|x-p_{k,j}^{(1)}|^2),x-p_{k,j}^{(1)}\rangle{\rm d}x   \\
	=&O(e^{-\frac{3}{2}\lambda_{k,j}^{(1)}})\int_{|z|<\frac{r}{\epsilon_{k,j}}}\frac{|z|^{3}}{(1+|z|^{2})^2}{\rm d}z=O(e^{-\lambda_{k,j}^{(1)}}).
	\end{split}
	\end{equation}
	Therefore, $K_3$ is small in this case as well.
	\begin{equation}\label{K3-est-2}
	K_3=o(e^{-\frac{\lambda_{k,j}^{(1)}}{1+\alpha_1}}),\quad \tau+1\leq j\leq m,
	\end{equation}
	where $\alpha_1>0$ is used. Lemma \ref{lem-PI1-right} is established.

\end{proof}

Since $|A_{k,j}|=O(1)$, (\ref{PI-1}) along with Lemma \ref{lem-PI1-left} and Lemma \ref{lem-PI1-right} implies the initial estimate for $A_{k,j}$.

\begin{cor}\label{cor-A-kj}
	\begin{equation}\label{A-kj-est}
	|A_{k,j}|=o(e^{-\frac{\lambda_{k,j}^{(1)}}{2(1+\alpha_1)}}),\quad 1 \leq j \leq m.
	\end{equation}
	\begin{flushright}
		\qed
	\end{flushright}
\end{cor}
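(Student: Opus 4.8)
The plan is to read off the estimate directly from the Pohozaev identity (\ref{PI-1}) by combining the expansions of its two sides furnished by Lemmas \ref{lem-PI1-left} and \ref{lem-PI1-right}, and then invoking the crude a priori bound $|A_{k,l}|=O(1)$. Fix $j$ with $1\le j\le m$. Substituting (\ref{PI-1-l}) for the left side of (\ref{PI-1}) and the relevant one of (\ref{PI-1-r-1})--(\ref{PI-1-r-3}) for the right side, the two multiples of $A_{k,j}$ partially cancel: the $-4(1+\alpha_j)A_{k,j}$ coming from the left and the $-2(1+\alpha_j)A_{k,j}$ coming from the right leave a single term $-2(1+\alpha_j)A_{k,j}$, which must balance the remaining error terms.

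Next I would bound those errors. From the left side one retains $O\big(e^{-\lambda_{k,j}^{(1)}/(1+\alpha_1)}\sum_{l=1}^m|A_{k,l}|\big)+o\big(e^{-\lambda_{k,j}^{(1)}/(1+\alpha_1)}\big)$; from the right side the largest contribution is $o\big(e^{-\lambda_{k,j}^{(1)}/(2(1+\alpha_1))}\big)$, which is the bound occurring in the range $t+1\le j\le\tau$, the ranges $1\le j\le t$ and $\tau+1\le j\le m$ producing only $o\big(e^{-\lambda_{k,j}^{(1)}/(1+\alpha_1)}\big)$ errors (in particular the explicit $b_0 e^{-\lambda_{k,j}^{(1)}/(1+\alpha_1)}$ term of (\ref{PI-1-r-1}) is itself $o(e^{-\lambda_{k,j}^{(1)}/(2(1+\alpha_1))})$ because $\lambda_{k,j}^{(1)}\to+\infty$). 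Using $|A_{k,l}|=O(1)$ together with (\ref{lambda-ij}), which makes all the $\lambda_{k,l}^{(1)}$ mutually comparable, the coupling term reduces to $O(e^{-\lambda_{k,j}^{(1)}/(1+\alpha_1)})$, and since $\tfrac1{1+\alpha_1}>\tfrac1{2(1+\alpha_1)}$ this is $o(e^{-\lambda_{k,j}^{(1)}/(2(1+\alpha_1))})$. Dividing by $2(1+\alpha_j)$ then yields $|A_{k,j}|=o(e^{-\lambda_{k,j}^{(1)}/(2(1+\alpha_1))})$ for every $j$, which is (\ref{A-kj-est}).

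There is no real obstacle at this stage: the estimate is only the first, crude pass, and the coupling among the $A_{k,l}$ enters merely through the $O(1)$ bound, so no inversion of a near-identity linear system -- the refinement needed later for the sharp estimates behind the uniqueness theorems -- is required here. The only point calling for slight care is to verify that each error term indeed lies below $e^{-\lambda_{k,j}^{(1)}/(2(1+\alpha_1))}$; this follows from $\alpha_1\ge\alpha_j$ and $\lambda_{k,j}^{(1)}\to+\infty$, so that the exponents with $\tfrac1{1+\alpha_1}$ decay faster than those with $\tfrac1{2(1+\alpha_1)}$.
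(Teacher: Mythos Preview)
Your proposal is correct and follows exactly the paper's approach: the paper simply states that since $|A_{k,j}|=O(1)$, the Pohozaev identity (\ref{PI-1}) together with Lemmas \ref{lem-PI1-left} and \ref{lem-PI1-right} yields the estimate, and your write-up spells out precisely this computation. One small remark: the coupling term $O\big(e^{-\lambda_{k,j}^{(1)}/(1+\alpha_1)}\sum_l|A_{k,l}|\big)$ already has the exponent in $\lambda_{k,j}^{(1)}$, so the appeal to (\ref{lambda-ij}) is not even needed at this step.
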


Based on (\ref{A-kj-est}), we can improve the estimates in (\ref{PI-1-l}) and (\ref{GRF-est-1}):
\begin{equation}\label{PI-1-l-re}
	{\rm (LHS)}\ {\rm of}\  (\ref{PI-1})=-4(1+\alpha_j)A_{k,j}+o(e^{-\frac{\lambda_{k,j}^{(1)}}{1+\alpha_1}})\quad 1\leq j\leq m.	
\end{equation}
\begin{equation}\label{GRF-est-2}
	\varsigma_k(x)-\bar{\varsigma}_k=o(e^{-\frac{\lambda_{k,1}^{(1)}}{2(1+\alpha_1)}}) \quad {\rm in}\ \; C^1\Big(M\setminus\bigcup_{j=1}^m B(p_{k,j}^{(1)},\theta)\Big).
\end{equation}

The identity (\ref{GRF-est-2}), which is the refined $C^1$-estimate of $\varsigma_k$ away from the blowup points, will help to improve the estimate of RHS of Pohozaev-type identity (\ref{PI-1}) and the estimate of $D\varsigma_k$. The later one will play a part in section \ref{pf-uni-2}. In order to achieve this goal, we analyse the projections of $\varsigma_{k,j}$ in more detail.

For $1\leq j\leq \tau$, we recall the equation of $\varsigma_k$ in $B(p_{k,j}^{(1)},r_0)$:
\begin{equation*}
\begin{split}
\left\{
\begin{array}{lcl}
\Delta \varsigma_k+\rho_k\tilde{h}_j|x-p_{k,j}^{(1)}|^{2\alpha_j}e^{U_{k,j}^{(1)}+G_{k,j}^{(1)}-G_{k,j}^{(1)}(p_{k,j}^{(1)})+\eta_{k,j}^{(1)}}\varsigma_k\frac{1-e^{u_k^{(2)}-u_k^{(1)}}}{u_k^{(1)}-u_k^{(2)}}=0, \\
|\varsigma_k|\leq 1,
\end{array}
\right.
\end{split}
\end{equation*}
and set the following quantities for convenience:
\begin{align*}
    &a_{k,j}=\nabla(\log h_j+G_{k,j}^{(1)})(p_{k,j}^{(1)}), \quad d_k=\parallel u_k^{(1)}-u_k^{(2)}\parallel_{L^{\infty}(M)},  \\
	&n_0=\max\big\{n\in\mathbb{N}:n\leq\frac{1}{2\gamma} \big\},\quad U_{j}(r)=\log\frac{8(1+\alpha_j)^2}{(1+r^{2(1+\alpha_j)})^2}.
\end{align*}
Then the equation for $\varsigma_k$ becomes
\begin{equation*}
\Delta \varsigma_k+\frac{\rho_kh_j(p_{k,j}^{(1)})e^{\lambda_{k,j}^{(1)}}|x-p_{k,j}^{(1)}|^{2\alpha_j}e^{g_k(x)}}{\big(1+\frac{\rho_kh_j(p_{k,j}^{(1)})}{8(1+\alpha_j)^2}e^{\lambda_{k,j}^{(1)}}|x-p_{k,j}^{(1)}|^{2(1+\alpha_j)}\big)^2}\Big\{\sum_{n=0}^{n_0}\frac{(-d_k)^{n}}{(n+1)!}\varsigma_k^{n+1}+O(d_k^{n_0+1})\Big\}=0,
\end{equation*}
where
\begin{align*}
	g_k(x)=&\langle a_{k,j},x-p_{k,j}^{(1)}\rangle\Big\{1-\frac{2(1+\alpha_j)}{\alpha_j}\Big(1+\frac{\rho_kh_j(p_{k,j}^{(1)})}{8(1+\alpha_j)^2}e^{\lambda_{k,j}^{(1)}}|x-p_{k,j}^{(1)}|^{2(1+\alpha_j)}\Big)^{-1}\Big\}  \\
	&+d_j\log\Big(2+e^{\frac{\lambda_{k,j}^{(1)}}{2(1+\alpha_j)}}|x-p_{k,j}^{(1)}|\Big)e^{-\frac{\lambda_{k,j}^{(1)}}{1+\alpha_j}}+O(|x-p_{k,j}^{(1)}|^2)+O(\epsilon_{k,1}^2)
\end{align*}
After scaling $x=\epsilon_{k,j}z+p_{k,j}^{(1)}$, we have
\begin{align}\label{equ-varsigma_kj}
\Delta \varsigma_{k,j}(z)+\frac{8(1+\alpha_j)^2|z|^{2\alpha_j}}{(1+|z|^{2(1+\alpha_j)})^2}\varsigma_{k,j}(z)=E_{k,j}(z),
\end{align}
where
\begin{align*}
	&E_{k,j}(z)=\frac{8(1+\alpha_j)^2|z|^{2\alpha_j}}{(1+|z|^{2(1+\alpha_j)})^2}
	\bigg\{-\epsilon_{k,j}\langle a_{k,j},z\rangle\big(1-\frac{2(1+\alpha_j)}{\alpha_j}\frac{1}{1+|z|^{2(1+\alpha_j)}}\big)\varsigma_{k,j}(z)  \\
	&+
	\Big[1+\epsilon_{k,j}\langle a_{k,j},z\rangle\big(1-\frac{2(1+\alpha_j)}{\alpha_j}\frac{1}{1+|z|^{2(1+\alpha_j)}}\big)\Big]\sum_{n=1}^{n_0}\frac{(-1)^{n+1}d_k^{n}}{(n+1)!}\varsigma_{k,j}^{n+1}+o(\epsilon_{k,1})\bigg\}.
\end{align*}

For each integer $l\ge 0$ we define the projections of frequency $l$ as
\begin{align*}
	&\xi_l(r)=\frac{1}{2\pi}\int_{0}^{2\pi}\varsigma_{k,j}(r\cos\theta,r\sin\theta)\cos(l\theta){\rm d}\theta,   \\
	&\tilde{\xi}_l(r)=\frac{1}{2\pi}\int_{0}^{2\pi}\varsigma_{k,j}(r\cos\theta,r\sin\theta)\sin(l\theta){\rm d}\theta.
\end{align*}
Obviously the study of $\xi_l$ is representative enough. (\ref{equ-varsigma_kj}) shows that $\xi_l$ satisfies
\begin{align*}
	\xi_l^{''}+\frac{1}{r}\xi_l^{'}+\Big(r^{2\alpha_j}e^{U_j}-\frac{l^2}{r^2}\Big)\xi_l=\tilde{E}_{l}(r),\quad l\ge 1,
\end{align*}
where
\begin{align*}
	&\tilde{E}_{1}(r)=r^{2\alpha_j}e^{U_j}\Big\{-\frac{a_{k,j}^1}{4}\epsilon_{k,j}r\big(1-\frac{2(1+\alpha_j)}{\alpha_j}\frac{1}{1+r^{2(1+\alpha_j)}}\big)\xi_0+O(d_k\xi_1)+o(\epsilon_{k,1})\Big\},  \\
	&\tilde{E}_{2}(r)=r^{2\alpha_j}e^{U_j}\Big\{-\frac{a_{k,j}^1}{4}\epsilon_{k,j}r\big(1-\frac{2(1+\alpha_j)}{\alpha_j}\frac{1}{1+r^{2(1+\alpha_j)}}\big)\xi_1+O(d_k\xi_2)+o(\epsilon_{k,1})\Big\},  \\
	&\tilde{E}_{l}(r)=r^{2\alpha_j}e^{U_j}\Big\{O(d_k\xi_l)+o(\epsilon_{k,1})\Big\},\qquad l\ge 3,
\end{align*}
and $a_{k,j}^1$ is the first component of $a_{k,j}$. Moreover, from (\ref{GRF-est-2}) we obtain that $\xi_l(0)=o(1)$ for all $l\ge 1$ and
\begin{equation}\label{ode-boundary}
\begin{split}
&|\xi_l(r)|\leq 1,\  \ \quad r\in(0,\frac{r_0}{\epsilon_{k,j}}),\ \ \quad l\ge 0,  \\
&\xi_l(r)=o(\epsilon_{k,1}),\quad r\sim e^{\frac{\lambda_{k,j}^{(1)}}{2(1+\alpha_j)}},\quad l\ge 1.
\end{split}
\end{equation}
From the equation of $\xi_l$ and the maximum principle, we only need to consider the finite $l$. Without loss of generality, we consider $1\leq l\leq l_0$ in the following analysis. Let us denote $\delta_{l,j}=\frac{l}{1+\alpha_j}$ and consider the homogeneous ordinary differential equation
\begin{equation}\label{ode}
	\xi_l^{''}+\frac{1}{r}\xi_l^{'}+\Big(r^{2\alpha_j}e^{U_j}-\frac{l^2}{r^2}\Big)\xi_l=0.
\end{equation}
By direct computation, we can verify that the following two functions are two fundamental solutions of (\ref{ode})
\begin{align*}
&\xi_{l,1}(r)=\frac{(\delta_{l,j}+1)r^l+(\delta_{l,j}-1)r^{2(1+\alpha_j)+l}}{1+r^{2(1+\alpha_j)}},\\
&\xi_{l,2}(r)=\frac{(\delta_{l,j}+1)r^{2(1+\alpha_j)-l}+(\delta_{l,j}-1)r^{-l}}{1+r^{2(1+\alpha_j)}}.
\end{align*}
Using $|\xi_l|\leq 1$ we have $C_{l,2}=0$, that is
\begin{equation*}
	\xi_l(r)=C_{l,1}\xi_{l,1}(r)+\xi_{l,p}(r)
\end{equation*}
where $C_{l,1}$ is a constant, and
\begin{equation}\label{solution-p}
\xi_{l,p}(r)=\Big(\int\frac{w_1}{w}{\rm d}r\Big)\xi_{l,1}(r)+\Big(\int\frac{w_2}{w}{\rm d}r\Big)\xi_{l,2}(r)
\end{equation}
for
\begin{equation*}
w=
\begin{vmatrix}
\xi_{l,1} & \xi_{l,2} \\
\xi_{l,1}^{'} & \xi_{l,2}^{'}
\end{vmatrix},\quad
w_1=\begin{vmatrix}
0 & \xi_{l,2} \\
\tilde{E}_l & \xi_{l,2}^{'}
\end{vmatrix},\quad
w_2=\begin{vmatrix}
\xi_{l,1} & 0 \\
\xi_{l,1}^{'} & \tilde{E}_l
\end{vmatrix}.\quad
\end{equation*}

It is easy to see that $w^{'}=(\xi_{l,1}\xi_{l,2}^{'}-\xi_{l,1}^{'}\xi_{l,2})^{'}=-\frac{1}{r}w$, which means $w(r)\sim \frac{1}{r}$. Next, let us estimate $\xi_{l}$ in $(0,\frac{r_0}{\epsilon_{k,j}})$ for $l\ge 1$.

\smallskip
For $1\leq j\leq t$, the assumption $D(\mathbf{p})=0$ implies $a_{k,j}=O(\epsilon_{k,1}^2)$. Furthermore, for $t+1\leq j\leq \tau$, it is easy to see that $\epsilon_{k,j}=o(\epsilon_{k,1})$. Therefore, for all $1\leq j\leq \tau$, we estimate $\tilde{E}_l$ as follows
\begin{equation}\label{E-1}
\begin{split}
&\tilde{E}_{l}(r)=r^{2\alpha_j}e^{U_j}\big\{o(\epsilon_{k,1})r+O(d_k\xi_l)+o(\epsilon_{k,1})\big\},\quad l=1,2;  \\
&\tilde{E}_{l}(r)=r^{2\alpha_j}e^{U_j}\big\{O(d_k\xi_l)+o(\epsilon_{k,1})\big\},\, \qquad\quad\qquad l\ge 3.
\end{split}
\end{equation}
Roughly,
\begin{equation}\label{E-2}
\begin{split}
&\tilde{E}_{l}(r)=r^{2\alpha_j}e^{U_j}\big\{o(\epsilon_{k,1})r+O(d_k)+o(\epsilon_{k,1})\big\},\quad l=1,2;  \\
&\tilde{E}_{l}(r)=r^{2\alpha_j}e^{U_j}\big\{O(d_k)+o(\epsilon_{k,1})\big\},\, \qquad\quad\qquad l\ge 3.
\end{split}
\end{equation}
By using the above estimates (\ref{E-2}) for $\tilde{E}_l$ and (\ref{solution-p}), we have
\begin{align*}
\xi_{l,p}(r)=&\big(O(d_k)+o(\epsilon_{k,1})\big)\bigg\{\Big(\int_{r}^{\infty}s^{2\alpha_j+1}e^{U_j(s)}(s+1)\xi_{l,2}(s){\rm d}s\Big)\xi_{l,1}(r)\\
&+\Big(\int_{r}^{\infty}s^{2\alpha_j+1}e^{U_j(s)}(s+1)\xi_{l,1}(s){\rm d}s\Big)\xi_{l,2}(r)\bigg\},\quad l=1,2.\\
\xi_{l,p}(r)=&\big(O(d_k)+o(\epsilon_{k,1})\big)\bigg\{\Big(\int_{r}^{\infty}s^{2\alpha_j+1}e^{U_j(s)}\xi_{l,2}(s){\rm d}s\Big)\xi_{l,1}(r)\\
&+\Big(\int_{r}^{\infty}s^{2\alpha_j+1}e^{U_j(s)}\xi_{l,1}(s){\rm d}s\Big)\xi_{l,2}(r)\bigg\},\quad l\ge 3.
\end{align*}
Direct computation shows, for $0<r<1$
\begin{align*}
	\xi_{l,p}(r)=\big(O(d_k)+o(\epsilon_{k,1})\big)\big(r^l+r^{2\alpha_j+2}\big),\quad l\ge 1;
\end{align*}
and for $1<r<\frac{r_0}{\epsilon_{k,j}}$
\begin{align*}
\xi_{l,p}(r)=\left\{
\begin{array}{lcl}
\big(O(d_k)+o(\epsilon_{k,1})\big)\big(r^{-l}+r^{-(2\alpha_j+1)}\big),\quad l=1,2, \\
\big(O(d_k)+o(\epsilon_{k,1})\big)\big(r^{-l}+r^{-(2\alpha_j+2)}\big),\quad l\ge 3.
\end{array}
\right.
\end{align*}

Consequently
\begin{equation}\label{xi-est-1}
	\parallel \xi_{l,p}\parallel_{L^{\infty}((0,\frac{r_0}{\epsilon_{k,j}}))}=O(d_k)+o(\epsilon_{k,1}),\quad 1\leq l\leq l_0.
\end{equation}
Combining (\ref{E-1}) and (\ref{xi-est-1}), we rewrite $\tilde{E}_l$ as
\begin{equation}\label{E-3}
\begin{split}
&\tilde{E}_{l}(r)=r^{2\alpha_j}e^{U_j}\big\{o(\epsilon_{k,1})r+O(d_k^2)+o(\epsilon_{k,1})\big\},\quad l=1,2;  \\
&\tilde{E}_{l}(r)=r^{2\alpha_j}e^{U_j}\big\{O(d_k^2)+o(\epsilon_{k,1})\big\},\, \qquad\quad\qquad l\ge 3.
\end{split}
\end{equation}
Then repeating the above argument $n_0$ times, we obtain
\begin{equation}\label{xi-est-2}
\parallel \xi_{l,p}\parallel_{L^{\infty}((0,\frac{r_0}{\epsilon_{k,j}}))}=o(\epsilon_{k,1}),\quad 1\leq l\leq l_0.
\end{equation}
Then, from (\ref{ode-boundary}), we have
\begin{equation*}
   C_{l,1}=o(\epsilon_{k,1})O(\epsilon_{k,j}^l).
\end{equation*}
and
\begin{equation}\label{xi-cos}
	\parallel \xi_l\parallel_{L^{\infty}((0,\frac{r_0}{\epsilon_{k,j}}))}=o(\epsilon_{k,1}),\quad  l\ge 1.
\end{equation}
Similarly,
\begin{equation}\label{xi-sin}
\parallel \tilde{\xi}_l\parallel_{L^{\infty}((0,\frac{r_0}{\epsilon_{k,j}}))}=o(\epsilon_{k,1}),\quad  l\ge 1.
\end{equation}

In other words, all projections of high frequency of $\varsigma_{k,j}$ $(1\leq j\leq \tau)$ are $o(\epsilon_{k,1})$.

\medskip
Using (\ref{xi-cos}) and (\ref{xi-sin}), we now obtain an important sharper estimate of the right hand side of (\ref{PI-1}).

\begin{lem}\label{lem-PI1-right-re}
	\begin{equation}\label{PI-1-r-4}
	{\rm (RHS)}\ {\rm of} (\ref{PI-1})=-2(1+\alpha_j)A_{k,j}+o(e^{-\frac{\lambda_{k,j}^{(1)}}{1+\alpha_1}}),\quad t+1\leq j\leq \tau.
	\end{equation}
\end{lem}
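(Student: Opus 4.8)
The plan is to keep the decomposition ${\rm (RHS)}\ {\rm of}\ (\ref{PI-1})=K_1+K_2+K_3$ from the proof of Lemma \ref{lem-PI1-right} and only to sharpen the estimate of $K_3$ in the range $t+1\le j\le\tau$: the boundary term $K_1=O(e^{-\lambda_{k,j}^{(1)}})$ (see (\ref{K1})) already meets the stronger bound since $\alpha_1>0$, and $K_2=-2(1+\alpha_j)A_{k,j}$ (see (\ref{K2})), so the whole content of the lemma is the claim
\[
K_3=-\int_{B(p_{k,j}^{(1)},r)}f_k e^{\phi_j}\big\langle\nabla(\log\tilde h_j+\phi_{k,j}),\,x-p_{k,j}^{(1)}\big\rangle\,{\rm d}x=o\big(e^{-\lambda_{k,j}^{(1)}/(1+\alpha_1)}\big).
\]

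First I would rescale $x=\epsilon_{k,j}z+p_{k,j}^{(1)}$ and, exactly as in the proof of Lemma \ref{lem-C1-est} (via (\ref{c-est}), (\ref{eta1}), (\ref{eta2}) and $\nabla\phi_j(p_{k,j}^{(1)})=0$), record that $f_k e^{\phi_j}\,{\rm d}x=\frac{8(1+\alpha_j)^2|z|^{2\alpha_j}}{(1+|z|^{2(1+\alpha_j)})^2}\varsigma_{k,j}(z)\big(1+O(\epsilon_{k,j}|z|)+o(1)\big)\,{\rm d}z$, where the $o(1)$ factor (produced by $e^{\eta_{k,j}^{(1)}}$, by $e^{G_{k,j}^{(1)}-G_{k,j}^{(1)}(p_{k,j}^{(1)})}$ at scale $O(\epsilon_{k,j})$, and by $c_k=e^{u_k^{(1)}}(1+O(d_k))$) is \emph{uniform in $z$}, and that $\big\langle\nabla(\log\tilde h_j+\phi_{k,j})(x),x-p_{k,j}^{(1)}\big\rangle=\epsilon_{k,j}\langle a_{k,j},z\rangle+O(\epsilon_{k,j}^2|z|^2)$ with $a_{k,j}=\nabla(\log h_j+\phi_{k,j})(p_{k,j}^{(1)})=O(1)$. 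Substituting these in, the contribution of the $O(\epsilon_{k,j}^2|z|^2)$-piece and of the cross term $\epsilon_{k,j}\langle a_{k,j},z\rangle\cdot O(\epsilon_{k,j}|z|)$ are bounded crudely using $|\varsigma_{k,j}|\le1$ and the convergence of the relevant weighted integrals by $O(\epsilon_{k,j}^2)$; the uniform $o(1)$-factor merely multiplies the leading linear term and is pulled outside the integral.

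The one genuinely new point is the leading piece $-\epsilon_{k,j}\int_{|z|<r/\epsilon_{k,j}}\frac{8(1+\alpha_j)^2|z|^{2\alpha_j}}{(1+|z|^{2(1+\alpha_j)})^2}\langle a_{k,j},z\rangle\varsigma_{k,j}(z)\,{\rm d}z$ (and its bounded constant multiple). Writing $z=\rho(\cos\theta,\sin\theta)$, since $\langle a_{k,j},z\rangle=\rho(a_{k,j}^1\cos\theta+a_{k,j}^2\sin\theta)$ the angular integration against $\varsigma_{k,j}$ isolates precisely the frequency-one projections $\xi_1(\rho),\tilde\xi_1(\rho)$, so this piece equals $-2\pi\epsilon_{k,j}\int_0^{r/\epsilon_{k,j}}\frac{8(1+\alpha_j)^2\rho^{2\alpha_j+2}}{(1+\rho^{2(1+\alpha_j)})^2}\big(a_{k,j}^1\xi_1(\rho)+a_{k,j}^2\tilde\xi_1(\rho)\big)\,{\rm d}\rho$. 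This is exactly where the preceding mode-by-mode analysis is used: by (\ref{xi-cos}) and (\ref{xi-sin}), $\|\xi_1\|_{L^\infty}+\|\tilde\xi_1\|_{L^\infty}=o(\epsilon_{k,1})$, while $\int_0^\infty\rho^{2\alpha_j+2}(1+\rho^{2(1+\alpha_j)})^{-2}\,{\rm d}\rho<\infty$, so this piece is $O(\epsilon_{k,j})\cdot o(\epsilon_{k,1})=o(\epsilon_{k,j}\epsilon_{k,1})$.

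Finally I would match the two error sizes against the target using $|\lambda_{k,1}^{(1)}-\lambda_{k,j}^{(1)}|\le C$ (see (\ref{lambda-ij})): $\epsilon_{k,j}^2=O\big(e^{-\lambda_{k,j}^{(1)}/(1+\alpha_j)}\big)=o\big(e^{-\lambda_{k,j}^{(1)}/(1+\alpha_1)}\big)$ since $\alpha_j<\alpha_1$, and $\epsilon_{k,j}\epsilon_{k,1}=O\big(e^{-\lambda_{k,1}^{(1)}(\frac{1}{2(1+\alpha_j)}+\frac{1}{2(1+\alpha_1)})}\big)=o\big(e^{-\lambda_{k,j}^{(1)}/(1+\alpha_1)}\big)$, again because $\frac{1}{2(1+\alpha_j)}>\frac{1}{2(1+\alpha_1)}$; hence $K_3=o(e^{-\lambda_{k,j}^{(1)}/(1+\alpha_1)})$ and (\ref{PI-1-r-4}) follows. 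The main (and essentially only) obstacle is to see that the linear-in-$z$ term feels only the first angular mode of $\varsigma_{k,j}$, so that the refined decay (\ref{xi-cos})--(\ref{xi-sin}) — rather than the trivial bound $|\varsigma_{k,j}|\le1$, which is insufficient here since $\alpha_1<1$ forces $\epsilon_{k,j}\gg e^{-\lambda_{k,j}^{(1)}/(1+\alpha_1)}$ — is what produces the extra power; after that, the bookkeeping of exponents is identical to Case 2 of Lemma \ref{lem-PI1-right}.
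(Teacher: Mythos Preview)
Your decomposition and the identification of the leading linear piece are exactly right, and you correctly see that the angular integration against $\langle a_{k,j},z\rangle$ isolates the frequency-one projections $\xi_1,\tilde\xi_1$, so that (\ref{xi-cos})--(\ref{xi-sin}) give $o(\epsilon_{k,1})$ there. The structure matches the paper's proof.

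There is, however, a genuine gap in your treatment of the ``uniform $o(1)$-factor''. You cannot pull it outside the integral: the $O(d_k)$ correction coming from $c_k=e^{u_k^{(1)}}(1+O(d_k))$ is \emph{not} constant in $z$ --- concretely, $c_k/e^{u_k^{(1)}}=1-\tfrac{1}{2}d_k\varsigma_k+O(d_k^2)$ depends on $z$ through $\varsigma_k$ --- so the perturbed integral need not inherit the cancellation of the unperturbed one. If instead you bound this piece crudely by $|o(1)|\cdot\epsilon_{k,j}\int(\text{weight})|z|\,dz=O(\epsilon_{k,j}d_k)$, this is \emph{not} $o(\epsilon_{k,1}^2)$: by Lemma~\ref{est1} one only has $d_k=O(\epsilon_{k,1}^{2\gamma})$ with $\gamma<\tfrac12$, so $\epsilon_{k,j}d_k\sim\epsilon_{k,1}^{1+2\gamma}\gg\epsilon_{k,1}^2$ whenever $\alpha_j$ is close to $\alpha_1$.

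The paper closes this gap by expanding $f_k$ as a finite Taylor sum in $d_k\varsigma_k$ up to order $n_0=\lfloor 1/(2\gamma)\rfloor$, so that the integrand becomes $(\text{weight})\langle a_{k,j},z\rangle\sum_{n=0}^{n_0}\tfrac{(-d_k)^n}{(n+1)!}\varsigma_{k,j}^{n+1}+O(d_k^{n_0+1})$. For each $n$ the key point is that $\varsigma_{k,j}^{n+1}=\xi_0^{n+1}+o(\epsilon_{k,1})$ in $L^\infty$ (since every mode $l\ge1$ of $\varsigma_{k,j}$ is $o(\epsilon_{k,1})$), and the radial part $\xi_0^{n+1}$ integrates to zero against $\langle a_{k,j},z\rangle$; hence each term contributes $O(\epsilon_{k,j}d_k^n)\cdot o(\epsilon_{k,1})=o(\epsilon_{k,1}^2)$. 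The remainder $O(d_k^{n_0+1})$ is now small enough, since $(n_0+1)\gamma>\tfrac12$ forces $d_k^{n_0+1}=o(\epsilon_{k,1})$, and the trivial bound gives $O(\epsilon_{k,j})\cdot o(\epsilon_{k,1})=o(\epsilon_{k,1}^2)$. Your argument becomes correct once you insert this expansion in place of the single $(1+o(1))$ factor.
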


\begin{proof}[\textbf{Proof}]
	
	In view of (\ref{K1})$\sim$(\ref{expansion}) and (\ref{K3-first-2})$\sim$(\ref{K3-third-2}), we only need to improve the estimate in (\ref{K3-first-2}). In other words, it is enough to prove the following estimate
	\begin{equation}\label{K3-first-2-re}
	\int_{B(p_{k,j}^{(1)},r)}f_ke^{\phi_j}\langle \nabla(\log h_j+\phi_{k,j})(p_{k,j}^{(1)}),x-p_{k,j}^{(1)}\rangle{\rm d}x =o(e^{-\frac{\lambda_{k,j}^{(1)}}{1+\alpha_1}}).
	\end{equation}
	In fact, by the change of variable $x=\epsilon_{k,j}z+p_{k,j}^{(1)}$, we have
    \begin{align*}
    &\int_{B(p_{k,j}^{(1)},r)}f_ke^{\phi_j} \langle \nabla(\log h_j+\phi_{k,j})(p_{k,j}^{(1)}),x-p_{k,j}^{(1)}\rangle{\rm d}x  \\
    =&\int_{B(p_{k,j}^{(1)},r)}\rho_k\tilde{h}_j|x-p_{k,j}^{(1)}|^{2\alpha_j}e^{u_k^{(1)}}\varsigma_k\frac{1-e^{u_k^{(2)}-u_k^{(1)}}}{u_k^{(1)}-u_k^{(2)}}\langle a_{k,j},x-p_{k,j}^{(1)}\rangle{\rm d}x \\
    =&\int_{B(p_{k,j}^{(1)},r)}\frac{\rho_kh_j(p_{k,j}^{(1)})e^{\lambda_{k,j}^{(1)}}|x-p_{k,j}^{(1)}|^{2\alpha_j}e^{g_k(x)}}{\big(1+\frac{\rho_kh_j(p_{k,j}^{(1)})}{8(1+\alpha_j)^2}e^{\lambda_{k,j}^{(1)}}|x-p_{k,j}^{(1)}|^{2(1+\alpha_j)}\big)^2} \langle a_{k,j},x-p_{k,j}^{(1)}\rangle  \\
    &\times\Big\{\sum_{n=0}^{n_0}\frac{(-d_k)^{n}}{(n+1)!}\varsigma_k^{n+1}+O(d_k^{n_0+1})\Big\}{\rm d}x  \\
    =&\epsilon_{k,j}\int_{|z|<\frac{r}{\epsilon_{k,j}}}\frac{8(1+\alpha_j)^2|z|^{2\alpha_j}}{(1+|z|^{2(1+\alpha_j)})^2}\sum_{n=0}^{n_0}\frac{(-d_k)^{n}}{(n+1)!}\varsigma_{k,j}^{n+1}<a_{k,j},z>{\rm d}x+o(\epsilon_{k,1}^2).  \\
    \end{align*}
    where we used the fact $\alpha_j<\alpha_1$ and the definition of $n_0$.
	Then from symmetry and the estimates of high frequency of $\varsigma_{k,j}$, which are (\ref{xi-cos}) and (\ref{xi-sin}), we have the following estimate
	\begin{align*}
	\int_{B(p_{k,j}^{(1)},r)}f_ke^{\phi_j}\langle \nabla(\log h_j+\phi_{k,j})(p_{k,j}^{(1)}),x-p_{k,j}^{(1)}\rangle{\rm d}x=O(\epsilon_{k,j})o(\epsilon_{k,1})+o(\epsilon_{k,1}^2)
	=o(\epsilon_{k,1}^2).
	\end{align*}
Therefore, (\ref{K3-first-2-re}) holds. Finally, combining (\ref{K3-first-2-re}) with the proof of Lemma \ref{lem-PI1-right}, we obtain the esstimate (\ref{PI-1-r-4}).
	
\end{proof}

Based on the Pohozaev-type identity (\ref{PI-1}) and its refined estimates, which are (\ref{PI-1-l-re}) (\ref{PI-1-r-2}) (\ref{PI-1-r-3}) and (\ref{PI-1-r-4}), we can improve the estimate for $A_{k,j}$ and prove $b_0=0$.
\begin{cor}\label{cor-A-kj-re}
	\begin{equation}\label{A-kj-est-re}
	|A_{k,j}|=O(e^{-\frac{\lambda_{k,j}^{(1)}}{1+\alpha_1}}),\quad 1 \leq j \leq m.
	\end{equation}
	\begin{flushright}
		\qed
	\end{flushright}
\end{cor}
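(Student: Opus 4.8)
\medskip
\noindent\textbf{Plan of proof.}
The plan is to read $A_{k,j}$ straight off the Pohozaev identity (\ref{PI-1}) by feeding in the refined two-sided estimates already available, and then to combine the resulting formulas with the global cancellation $\sum_j A_{k,j}=0$ and the hypothesis $L(\mathbf{p})\neq0$ in order to force $b_0=0$.

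First I would pair the left-hand estimate (\ref{PI-1-l-re}), valid for every $1\le j\le m$, with the appropriate right-hand estimate: (\ref{PI-1-r-1}) when $1\le j\le t$, (\ref{PI-1-r-4}) when $t+1\le j\le\tau$, and (\ref{PI-1-r-3}) when $\tau+1\le j\le m$. For the indices $j>t$ both right-hand estimates have the form $-2(1+\alpha_j)A_{k,j}+o(e^{-\lambda_{k,j}^{(1)}/(1+\alpha_1)})$, so equating with (\ref{PI-1-l-re}) and cancelling the common term $-2(1+\alpha_j)A_{k,j}$ gives $A_{k,j}=o(e^{-\lambda_{k,j}^{(1)}/(1+\alpha_1)})$, which is even stronger than required. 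For $1\le j\le t$ (so $\alpha_j=\alpha_1$) the same cancellation between (\ref{PI-1-l-re}) and (\ref{PI-1-r-1}) yields
\begin{equation}\label{Akj-formula-cor}
A_{k,j}=\frac{2\pi^2\big[\Delta\log h(p_j)+\rho_*-N^*-2K(p_j)\big]}{(1+\alpha_1)^2\sin\frac{\pi}{1+\alpha_1}\,\big(\rho_kh_j(p_{k,j}^{(1)})\big)^{1/(1+\alpha_1)}}\,b_0\,e^{-\lambda_{k,j}^{(1)}/(1+\alpha_1)}+o\big(e^{-\lambda_{k,j}^{(1)}/(1+\alpha_1)}\big).
\end{equation}
Because $|b_0|\le1$ (it is the constant of Lemma \ref{lem-limit-2}, and $\|\varsigma_k\|_{L^\infty(M)}=1$) and the prefactor in (\ref{Akj-formula-cor}) is bounded, the estimate (\ref{A-kj-est-re}) follows at once; note that this half of the argument is a pure substitution and uses no information about $b_0$.

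To obtain $b_0=0$ I would sum (\ref{Akj-formula-cor}) over $j=1,\dots,t$, absorb into one $o(e^{-\lambda_{k,1}^{(1)}/(1+\alpha_1)})$ the contributions from the indices $j>t$ (using $|\lambda_{k,j}^{(1)}-\lambda_{k,1}^{(1)}|\le C$), and invoke the cancellation $\sum_{j=1}^mA_{k,j}=\int_Mf_k\,{\rm d}\mu=0$, which holds since $\rho_k^{(1)}=\rho_k^{(2)}$ and $\int_MHe^{u_k^{(i)}}\,{\rm d}\mu=1$. By (\ref{uk-ave-1})--(\ref{uk-ave-2}) each factor $e^{-\lambda_{k,j}^{(1)}/(1+\alpha_1)}$ equals $e^{-\lambda_{k,1}^{(1)}/(1+\alpha_1)}$ times the bounded quantity $\big(h_j(p_{k,j}^{(1)})/h_1(p_{k,1}^{(1)})\big)^{2/(1+\alpha_1)}\,e^{(G_{k,j}^{(1)}(p_{k,j}^{(1)})-G_{k,1}^{(1)}(p_{k,1}^{(1)}))/(1+\alpha_1)}$, and letting $k\to\infty$ (so $h_j(p_{k,j}^{(1)})\to h_j(p_j)$, $G_{k,j}^{(1)}(p_{k,j}^{(1)})\to G_j^*(p_j)$, $\rho_k\to\rho_*$) the surviving $j$-sum collapses to exactly $\sum_{j=1}^t\big[\Delta\log h(p_j)+\rho_*-N^*-2K(p_j)\big]h_j(p_j)^{1/(1+\alpha_1)}e^{G_j^*(p_j)/(1+\alpha_1)}=L(\mathbf{p})$; this is precisely the computation underlying the constant $L^*$ of Theorem \ref{Theorem chen-lin2}. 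Dividing $\sum_jA_{k,j}=0$ by $e^{-\lambda_{k,1}^{(1)}/(1+\alpha_1)}$ and passing to the limit then forces $c\,b_0\,L(\mathbf{p})=0$ for a nonzero constant $c$, hence $b_0=0$ since $L(\mathbf{p})\neq0$.

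The step I expect to be the main obstacle is this last one: carefully tracking how the common normalization $\bar u_k$ cancels across all the exponentials $e^{-\lambda_{k,j}^{(1)}/(1+\alpha_1)}$ through (\ref{uk-ave-1}), and recognizing that the surviving sum over $j=1,\dots,t$ is exactly $L(\mathbf{p})$ so that the hypothesis $L(\mathbf{p})\neq0$ can be applied. The estimate (\ref{A-kj-est-re}) itself, by contrast, is an immediate consequence of the Pohozaev identity (\ref{PI-1}) together with the refined bounds (\ref{PI-1-l-re}), (\ref{PI-1-r-1}), (\ref{PI-1-r-3}) and (\ref{PI-1-r-4}).
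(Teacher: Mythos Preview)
Your proposal is correct and follows essentially the same approach as the paper: the estimate (\ref{A-kj-est-re}) is obtained by equating (\ref{PI-1-l-re}) with the right-hand estimates (\ref{PI-1-r-1}), (\ref{PI-1-r-4}), (\ref{PI-1-r-3}) in the three index ranges and solving for $A_{k,j}$, exactly as you describe. Note only that the $b_0=0$ argument you sketch in the second half is in fact the content of the \emph{next} result, Proposition \ref{prop-b0}, and is not part of Corollary \ref{cor-A-kj-re} itself---as you yourself observe in your final paragraph.
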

\begin{prop}\label{prop-b0}
	$b_0=0$. In particular, $b_{j,0}=0$, for $1\leq j\leq m$.
\end{prop}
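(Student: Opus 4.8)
The plan is to derive a contradiction from $b_0 \neq 0$ by extracting the leading-order term in the Pohozaev-type identity (\ref{PI-1}) at one of the strongest blowup points $1 \le j \le t$. Suppose for contradiction that $b_0 \neq 0$. Combining Lemma \ref{lem-PI1-left} (in its refined form (\ref{PI-1-l-re})) with Lemma \ref{lem-PI1-right}, the identity (\ref{PI-1}) at such a $j$ reads
\begin{equation*}
-4(1+\alpha_j)A_{k,j}+o(e^{-\frac{\lambda_{k,j}^{(1)}}{1+\alpha_1}}) = -2(1+\alpha_j)A_{k,j}-\frac{4\pi^2\big[\Delta \log h(p_j)+\rho_*-N^*-2K(p_j)\big]}{\big(\rho_kh_j(p_{k,j}^{(1)})\big)^{\frac{1}{1+\alpha_1}}(1+\alpha_1)\sin \frac{\pi}{1+\alpha_1}}b_0e^{-\frac{\lambda_{k,j}^{(1)}}{1+\alpha_1}}+o(e^{-\frac{\lambda_{k,j}^{(1)}}{1+\alpha_1}}).
\end{equation*}
Since $\alpha_j = \alpha_1$ for $1 \le j \le t$, and since $\lambda_{k,j}^{(1)} - \lambda_{k,1}^{(1)} = O(1)$ by (\ref{lambda-ij}), we have $e^{-\lambda_{k,j}^{(1)}/(1+\alpha_1)} \asymp e^{-\lambda_{k,1}^{(1)}/(1+\alpha_1)}$, so the improved bound $|A_{k,j}| = O(e^{-\lambda_{k,j}^{(1)}/(1+\alpha_1)})$ from Corollary \ref{cor-A-kj-re} shows $A_{k,j}$ is at most of the same order as the explicit term. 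Rearranging, this yields
\begin{equation}\label{prop-b0-key}
-2(1+\alpha_j)A_{k,j} = \frac{4\pi^2\big[\Delta \log h(p_j)+\rho_*-N^*-2K(p_j)\big]}{\big(\rho_kh_j(p_{k,j}^{(1)})\big)^{\frac{1}{1+\alpha_1}}(1+\alpha_1)\sin \frac{\pi}{1+\alpha_1}}b_0e^{-\frac{\lambda_{k,j}^{(1)}}{1+\alpha_1}}+o(e^{-\frac{\lambda_{k,j}^{(1)}}{1+\alpha_1}}), \quad 1 \le j \le t.
\end{equation}

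The next step is to sum (\ref{prop-b0-key}) over $j = 1, \dots, t$ after renormalizing each term. Dividing the $j$-th equation by a suitable positive power and using the relation (\ref{uk-ave-2}), which controls $\lambda_{k,j}^{(1)} - \lambda_{k,1}^{(1)}$ in terms of the geometric data, one can express $e^{-\lambda_{k,j}^{(1)}/(1+\alpha_1)} = (h_j(p_{k,j}^{(1)}))^{1/(1+\alpha_1)} e^{G_j^*(p_j)/(1+\alpha_1)} \cdot c \cdot e^{-\lambda_{k,1}^{(1)}/(1+\alpha_1)}(1+o(1))$ for an explicit constant $c$ depending only on $\rho_*$, $\alpha_1$; here one uses that $G_{k,j}^{(1)}(p_{k,j}^{(1)}) \to G_j^*(p_j)$ and $\rho_k \to \rho_*$. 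Substituting into (\ref{prop-b0-key}) and summing, the right-hand sides combine — up to the common nonzero factor $e^{-\lambda_{k,1}^{(1)}/(1+\alpha_1)}$ and a fixed nonzero multiplicative constant — into precisely $b_0\, L(\mathbf{p})$, by the definition (\ref{L}) of $L(\mathbf{p})$. Meanwhile, on the left side we need the total $\sum_{j=1}^t A_{k,j}$ (weighted appropriately) to be $o(e^{-\lambda_{k,1}^{(1)}/(1+\alpha_1)})$; this is where the global conservation law $\sum_{l=1}^m A_{k,l} = \int_M f_k \, \mathrm{d}\mu = 0$ (used already in (\ref{Gbar-equ})) enters, together with the estimates (\ref{PI-1-r-3}), (\ref{PI-1-r-4}) showing that the contributions of $A_{k,j}$ for $j > t$ carry no term proportional to $b_0\, e^{-\lambda_{k,1}^{(1)}/(1+\alpha_1)}$. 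More precisely, applying (\ref{PI-1}) at the indices $j > t$ gives $A_{k,j} = o(e^{-\lambda_{k,j}^{(1)}/(1+\alpha_1)})$ for $t+1 \le j \le m$ (since the RHS there has no leading $b_0$-term), hence $\sum_{j=t+1}^m A_{k,j} = o(e^{-\lambda_{k,1}^{(1)}/(1+\alpha_1)})$, and therefore $\sum_{j=1}^t A_{k,j} = o(e^{-\lambda_{k,1}^{(1)}/(1+\alpha_1)})$ as well.

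Putting the two sides together, we obtain $0 = b_0\, L(\mathbf{p}) \cdot (\text{nonzero constant}) \cdot e^{-\lambda_{k,1}^{(1)}/(1+\alpha_1)} + o(e^{-\lambda_{k,1}^{(1)}/(1+\alpha_1)})$, so dividing by $e^{-\lambda_{k,1}^{(1)}/(1+\alpha_1)}$ and letting $k \to \infty$ forces $b_0\, L(\mathbf{p}) = 0$. Since $L(\mathbf{p}) \neq 0$ by hypothesis, we conclude $b_0 = 0$, and then $b_{j,0} = b_0 = 0$ for all $1 \le j \le m$ by Lemma \ref{lem-limit-2}. The main obstacle I anticipate is bookkeeping the weights correctly when summing (\ref{prop-b0-key}): one must verify that the geometric factors $(h_j(p_j))^{1/(1+\alpha_1)} e^{G_j^*(p_j)/(1+\alpha_1)}$ produced by (\ref{uk-ave-2}) reassemble exactly into the sum defining $L(\mathbf{p})$ in (\ref{L}), and that the cross-terms (the $A_{k,j}$ from regular points and the weaker singular points $t+1 \le j \le \tau$) genuinely contribute only $o(e^{-\lambda_{k,1}^{(1)}/(1+\alpha_1)})$ and not something of the same order — this is exactly what the refined Pohozaev estimates (\ref{PI-1-r-2}), (\ref{PI-1-r-3}), (\ref{PI-1-r-4}) and the conservation law are designed to guarantee.
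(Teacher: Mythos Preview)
Your proposal is correct and follows essentially the same route as the paper's proof: equate (\ref{PI-1-l-re}) with (\ref{PI-1-r-1})--(\ref{PI-1-r-4}) to isolate $A_{k,j}$, use the global cancellation $\sum_{j=1}^m A_{k,j}=0$ together with $A_{k,j}=o(e^{-\lambda_{k,1}^{(1)}/(1+\alpha_1)})$ for $j>t$, and invoke (\ref{uk-ave-2}) to convert each $e^{-\lambda_{k,j}^{(1)}/(1+\alpha_1)}/(\rho_k h_j(p_{k,j}^{(1)}))^{1/(1+\alpha_1)}$ into the factor $(h_j(p_j))^{1/(1+\alpha_1)}e^{G_j^*(p_j)/(1+\alpha_1)}$ (times a common constant), so that the sum reproduces $L(\mathbf p)$. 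Two minor comments: the contradiction framing is unnecessary since the argument yields $b_0\,L(\mathbf p)=0$ directly, and the phrase ``dividing by a suitable positive power'' is imprecise---what you actually do is use (\ref{uk-ave-2}) to rewrite $e^{-\lambda_{k,j}^{(1)}/(1+\alpha_1)}$ in terms of $e^{-\lambda_{k,1}^{(1)}/(1+\alpha_1)}$ (note $\alpha_j=\alpha_1$ for $j\le t$, so the factor $(1+\alpha_j)$ on the left of (\ref{prop-b0-key}) is common and no reweighting is needed before summing).
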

\begin{proof}[\textbf{Proof}]
	Now the global cancellation property of $f_k$ plays a crucial role:
	\begin{equation*}
		\sum_{j=1}^{m}A_{k,j}=\int_{M}f_k{\rm d}\mu=0.
	\end{equation*}
	From (\ref{PI-1}) (\ref{PI-1-l-re}) (\ref{PI-1-r-2}) (\ref{PI-1-r-3}) and (\ref{PI-1-r-4}), we can see
	\begin{align*}
		b_0e^{-\frac{\lambda_{k,1}^{(1)}}{1+\alpha_1}}\sum_{j=1}^t\big[\Delta h(p_j)+\rho_*-N^*-2K(p_j)\big]\big(\rho_kh_j(p_{k,j}^{(1)})\big)^{\frac{1}{1+\alpha_1}}=o(e^{-\frac{\lambda_{k,1}^{(1)}}{1+\alpha_1}}).
	\end{align*}
	On the other hand, from (\ref{uk-ave-2}), it holds
	\begin{equation*}
		h_j^2(p_j)=h_1^2(p_1)e^{G_1^*(p_1)}e^{-G_j^*(p_j)}+o(1),\quad 1\leq j\leq t.
	\end{equation*}
	As a consequence, we obtain
	\begin{equation}\label{b0-est}
	\begin{split}
	e^{-\frac{G_1^*(p_1)}{1+\alpha_1}}\big(\rho_*h_1^2(p_1)\big)^{-\frac{1}{1+\alpha_1}}L(\mathbf{p})b_0e^{-\frac{\lambda_{k,1}^{(1)}}{1+\alpha_1}}=o(e^{-\frac{\lambda_{k,1}^{(1)}}{1+\alpha_1}}),
	\end{split}
	\end{equation}
	which together with the assumption $L(\mathbf{p})\neq 0$ implies $b_0=0$. In particular, $b_{j,0}=0$ for $1\leq j\leq m$.	
	
\end{proof}

\section{Proof of Theorem \ref{main-theorem}}\label{pf-uni-1}

\begin{proof}[\textbf{Proof of Theorem \ref{main-theorem} }]
Let $p_k^*$ be a maximum point of $\varsigma_k$, which says
\begin{equation}\label{max=1}
|\varsigma_k(p_k^*)|=1
\end{equation}
In view of Lemma \ref{lem-limit-2} and Proposition \ref{prop-b0}, we obtain the fact
\begin{equation*}
	\varsigma_k\to 0 \quad{\rm in}\ \; C_{loc}(M\backslash\{p_1,\cdots,p_m\}).
\end{equation*}
 Therefore,
\begin{equation}\label{pk*}
\lim\limits_{k\rightarrow\infty}p_k^*=p_j,
\end{equation}
for some $p_j\in\{p_1,\cdots,p_m\}$. Moreover, denoting $s_k=|p_k^*-p_{k,j}^{(1)}|$, by Lemma \ref{lem-limit-1} and Proposition \ref{prop-b0}, it holds
\begin{equation*}
\varsigma_{k,j}\rightarrow  0 \quad{\rm in}\ \; C_{loc}(\mathbb{R}^2).
\end{equation*}
Thus,
\begin{equation}\label{sk}
\lim\limits_{k\rightarrow\infty}\epsilon_{k,j}^{-1}s_k=+\infty
\end{equation}
Setting $\tilde{\varsigma_k}(x)=\varsigma_k(s_kx+p_{k,j}^{(1)})$, $|x|<s_k^{-1}r$, where $r>0$ small enough, then $\tilde{\varsigma_k}$ satisfies
\begin{align*}
0=&\Delta\tilde{\varsigma_k}(x)+\rho_k\tilde{h}_j(s_kx+p_{k,j}^{(1)})s_k^{2(1+\alpha_j)}|x|^{2\alpha_j}c_k(s_kx+p_{k,j}^{(1)})\tilde{\varsigma_k}(x) \\
=& \Delta\tilde{\varsigma_k}(x)+\frac{8(1+\alpha_j)^2(\epsilon_{k,j}^{-1}s_k)^{2(1+\alpha_j)}|x|^{2\alpha_j}}{\big(1+(\epsilon_{k,j}^{-1}s_k)^{2(1+\alpha_j)}|x|^{2(1+\alpha_j)}\big)^2}\big(1+O(s_k|x|)+o(1)\big).
\end{align*}

On the other hand, by (\ref{max=1}), we also have
\begin{equation}\label{scale-max=1}
\Big|\tilde{\varsigma_k}\big(\frac{p_k^*-p_{k,j}^{(1)}}{s_k}\big)\Big|=|\varsigma_k(p_k^*)|=1.
\end{equation}
In view of (\ref{sk}) and $|\tilde{\varsigma_k}|\leq 1$, we see that $\tilde{\varsigma_k}\rightarrow\tilde{\varsigma_0}$ in $C_{loc}(\mathbb{R}^2\backslash\{0\})$, where $\tilde{\varsigma_0}$ satisfies $\Delta\tilde{\varsigma_0}=0$ in $\mathbb{R}^2\backslash\{0\}$. Since $|\tilde{\varsigma_0}|\leq 1$, we have $\Delta\tilde{\varsigma_0}=0$ in $\mathbb{R}^2$. Hence $\tilde{\varsigma_0}$ is a constant.

Recalling that $\frac{|p_k^*-p_{k,j}^{(1)}|}{s_k}=1$ and (\ref{scale-max=1}), we find that $\tilde{\varsigma_0}\equiv 1$ or $\tilde{\varsigma_0}\equiv -1$. Therefore, we obtain that for $ k $ large enough
\begin{equation}\label{contra1}
|\varsigma_k(x)|\geq\frac{1}{2},\quad |x-p_{k,j}^{(1)}|\in\big(\frac{s_k}{2},2s_k\big).
\end{equation}

By using Lemma \ref{lem-limit-2}, we have
\begin{equation}\label{contra2}
\varsigma_k(x)=o(1)+o(1)\log R+O(R^{-2(1+\alpha_j)}),\quad |x-p_{k,j}^{(1)}|\in(R\epsilon_{k,j},d).
\end{equation}
for fixed $d>0$ small enough and arbitrary $R>0$ large enough.

\smallskip
However, by (\ref{sk}), $\epsilon_{k,j}\ll s_k$. Thus, $|\varsigma_k(s_k)|<\frac{1}{4}$ for $ k $ large enough, which contradicts with (\ref{contra1}). Theorem \ref{main-theorem} is established.
	
\end{proof}

\section{Proof of Theorem \ref{main-theorem-2}}\label{pf-uni-2}

In this section, we will analyse the behavior of $u_k^{(1)}$ and $u_k^{(2)}$ whose common blowup points include singular source(s) and regular point(s). So in this section $\tau<m$, $0<\alpha_j< 1$ for $1\leq j\leq \tau$ and $\alpha_j=0$ for $\tau+1\leq j\leq m$. Our argument is similar to the approach in \cite{bart-4} where all blowup points are regular points. The fact that $\varsigma_{k,j}\rightarrow 0$ in $C_{loc}(\mathbb{R}^2)$ for all $1\leq j \leq m$ plays a vital role.

\medskip
In \cite{lin-yan-uniq}, Lin-Yan obtained the following Pohozaev-type identity:
\begin{lemA}\label{Pohozave identity-2}
\cite{bart-4,lin-yan-uniq}

For $\tau+1\leq j\leq m$, it holds
\begin{align}\label{PI-2}
\begin{split}
& \int_{\partial B(p_{k,j}^{(1)},r)}\Big(<\nu,\nabla\varsigma_k>\nabla_iv_{k,j}^{(1)}+<\nu,\nabla v_{k,j}^{(2)}>\nabla_i\varsigma_k \Big) {\rm d}\sigma \\
& \quad -\frac{1}{2}\int_{\partial B(p_{k,j}^{(1)},r)}<\nabla(v_{k,j}^{(1)}+v_{k,j}^{(2)}),\nabla\varsigma_k>\frac{(x-p_{k,j}^{(1)})_i}{|x-p_{k,j}^{(1)}|} {\rm d}\sigma, \\
=\ & -\int_{\partial B(p_{k,j}^{(1)},r)}\rho_k\tilde{h}_j(x)\frac{e^{u_k^{(1)}}-e^{u_k^{(2)}}}{\parallel u_k^{(1)}-u_k^{(2)} \parallel_{L^{\infty}(M)} }\frac{(x-p_{k,j}^{(1)})_i}{|x-p_{k,j}^{(1)}|} {\rm d}\sigma \\
& \quad + \int_{B(p_{k,j}^{(1)},r)} \rho_k\tilde{h}_j(x)\frac{e^{u_k^{(1)}}-e^{u_k^{(2)}}}{\parallel u_k^{(1)}-u_k^{(2)} \parallel_{L^{\infty}(M)}} \nabla_i\big(\log \tilde{h}_j+\phi_{k,j}\big) {\rm d}x.
\end{split}
\end{align}

\end{lemA}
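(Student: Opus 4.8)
The identity (\ref{PI-2}) is the \emph{translation} analogue of the \emph{dilation} identity (\ref{PI-1}): I would derive it by exactly the scheme used in the proof of Lemma \ref{Pohozaev identity-1}, with the radial field $x-p_{k,j}^{(1)}$ replaced by the constant field $e_i$. The starting point is the pointwise formula, valid for any smooth $u,v$,
\[
\Delta u\,\partial_i v+\Delta v\,\partial_i u=\mathrm{div}\big(\partial_i v\,\nabla u+\partial_i u\,\nabla v-(\nabla u\cdot\nabla v)\,e_i\big),
\]
which one checks by expanding the right-hand side and using $\partial_i(\nabla u\cdot\nabla v)=\nabla\partial_i u\cdot\nabla v+\nabla u\cdot\nabla\partial_i v$. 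Since $\tau+1\le j\le m$ forces $\alpha_j=0$, the singular weight $|x-p_{k,j}^{(1)}|^{2\alpha_j}$ is absent, so this translation identity does not produce any singular contribution.

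First I would put $u=v_{k,j}^{(1)}-v_{k,j}^{(2)}$ and $v=v_{k,j}^{(1)}+v_{k,j}^{(2)}$, so the left-hand side collapses to $2\big(\Delta v_{k,j}^{(1)}\,\partial_i v_{k,j}^{(1)}-\Delta v_{k,j}^{(2)}\,\partial_i v_{k,j}^{(2)}\big)$. Using (\ref{v-kj-equ}) with $\alpha_j=0$ to replace each $\Delta v_{k,j}^{(\ell)}$ by $-\rho_k\tilde h_j e^{u_k^{(\ell)}}$, and writing $g_\ell=\tilde h_j e^{v_{k,j}^{(\ell)}+\phi_{k,j}}=\tilde h_j e^{u_k^{(\ell)}}$ (as in the proof of Lemma \ref{Pohozaev identity-1}) together with $e^{u_k^{(\ell)}}\partial_i v_{k,j}^{(\ell)}=e^{\phi_{k,j}}\partial_i e^{v_{k,j}^{(\ell)}}$, the left-hand side becomes
\[
-2\rho_k\Big(\mathrm{div}\big((g_1-g_2)\,e_i\big)-(g_1-g_2)\,\partial_i\big(\log\tilde h_j+\phi_{k,j}\big)\Big).
\]
Integrating over $B(p_{k,j}^{(1)},r)$, every divergence term on either side turns into a boundary integral over $\partial B(p_{k,j}^{(1)},r)$ with outward normal $\nu$, while the single non-divergence term $(g_1-g_2)\,\partial_i(\log\tilde h_j+\phi_{k,j})$ remains as a volume integral.

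Then I would divide through by $\|u_k^{(1)}-u_k^{(2)}\|_{L^\infty(M)}$. Since $v_{k,j}^{(1)}-v_{k,j}^{(2)}=u_k^{(1)}-u_k^{(2)}$ (the $\phi_{k,j}$ cancels), $\nabla(v_{k,j}^{(1)}-v_{k,j}^{(2)})/\|u_k^{(1)}-u_k^{(2)}\|_{L^\infty(M)}=\nabla\varsigma_k$, and $g_1-g_2=\rho_k\tilde h_j(e^{u_k^{(1)}}-e^{u_k^{(2)}})$, so the two $g_1-g_2$ integrals are exactly the two terms on the right of (\ref{PI-2}). The symmetric boundary integrand $\langle\nu,\nabla\varsigma_k\rangle\,\partial_i(v_{k,j}^{(1)}+v_{k,j}^{(2)})+\langle\nu,\nabla(v_{k,j}^{(1)}+v_{k,j}^{(2)})\rangle\,\partial_i\varsigma_k$ is converted to the ``mixed'' form $\langle\nu,\nabla\varsigma_k\rangle\nabla_i v_{k,j}^{(1)}+\langle\nu,\nabla v_{k,j}^{(2)}\rangle\nabla_i\varsigma_k$ of (\ref{PI-2}) by inserting $v_{k,j}^{(1)}+v_{k,j}^{(2)}=2v_{k,j}^{(1)}-\|u_k^{(1)}-u_k^{(2)}\|_{L^\infty(M)}\varsigma_k=2v_{k,j}^{(2)}+\|u_k^{(1)}-u_k^{(2)}\|_{L^\infty(M)}\varsigma_k$, whereupon the $O(\|u_k^{(1)}-u_k^{(2)}\|)$ cross-terms cancel; dividing the resulting identity by $2$ yields (\ref{PI-2}). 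This last part is pure bookkeeping.

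The one point that needs genuine care is that the lower-order term $\rho_k e^{\phi_j}$ in the local equation (\ref{flat-equ-2}) for $u_k^{(\ell)}$ must not survive in (\ref{v-kj-equ}). This is exactly why $\phi_{k,j}$ is built from the regular part $R(\cdot,p_{k,j}^{(1)})$ and the Green's functions $G(\cdot,p_{k,l}^{(1)})$, $l\ne j$, in (\ref{phi-kj}): on $B(p_{k,j}^{(1)},r)$ these are smooth and satisfy $\Delta\phi_{k,j}=\rho_k e^{\phi_j}$ there, so (\ref{v-kj-equ}) holds with no remainder. Finally it is worth noting why (\ref{PI-2}) is stated only for $\tau+1\le j\le m$: at a genuine singular source $(\alpha_j>0)$ the weight $|x-p_{k,j}^{(1)}|^{2\alpha_j}$ breaks translation invariance, the computation produces an extra singular term $2\alpha_j(g_1-g_2)(x-p_{k,j}^{(1)})_i/|x-p_{k,j}^{(1)}|^2$ that cannot be absorbed, and one must fall back on the dilation identity (\ref{PI-1}).
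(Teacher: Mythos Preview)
Your derivation is correct and follows exactly the standard route: the translation analogue of the divergence identity (\ref{divergence}) with the vector field $x-p_{k,j}^{(1)}$ replaced by $e_i$, combined with the equation (\ref{v-kj-equ}) and the algebraic reduction of the symmetric boundary terms to the mixed form. The paper itself does not supply a proof of this lemma; it simply quotes the identity from \cite{bart-4,lin-yan-uniq}, where the argument is precisely the one you wrote down, so there is nothing to compare.

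Two small remarks. First, your observation that the ``$O(\|u_k^{(1)}-u_k^{(2)}\|)$ cross-terms cancel'' is in fact an exact cancellation: writing $A=\langle\nu,\nabla\varsigma_k\rangle$, $B=\partial_i\varsigma_k$, the difference between the mixed form $A\,\partial_i v_{k,j}^{(1)}+B\,\langle\nu,\nabla v_{k,j}^{(2)}\rangle$ and the symmetric half-sum is $\tfrac12\|u_k^{(1)}-u_k^{(2)}\|(AB-BA)=0$, so no error term is introduced at all. Second, your check that $\Delta\phi_{k,j}=\rho_k e^{\phi_j}$ on $B(p_{k,j}^{(1)},r)$ is exactly right (each Green/regular piece contributes $e^{\phi_j}$ in the flat Laplacian, and the prefactor in (\ref{phi-kj}) is designed so the weighted sum gives $\rho_k$), and this is indeed what makes (\ref{v-kj-equ}) hold with no residual $\rho_k e^{\phi_j}$ term; the factor $e^{\phi_{k,j}}$ appearing in the displayed (\ref{v-kj-equ}) in the paper is a typographical slip---the correct coefficient is $\rho_k\tilde h_j|x-p_{k,j}^{(1)}|^{2\alpha_j}$, as you use.
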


By Lemma 4.6 in \cite{bart-4} and Appendix D in \cite{lin-yan-uniq}, we have:
\begin{align}\label{PI-2-r}
{\rm (RHS)} \ {\rm of}\ (\ref{PI-2})=e^{-\frac{\lambda_{k,j}^{(1)}}{2}}\Big(\sum_{h=1}^2 D_{h,i}^2(\log \tilde{h}_j+\phi_{k,j} )(p_{k,j}^{(1)})b_{j,h}\Big)B_j+o(e^{-\frac{\lambda_{k,j}^{(1)}}{2}}).
\end{align}
for $i=1,2$ and $\tau+1\le j\le m$. The detail of this proof can be found in \cite{bart-4}.

The LHS of (\ref{PI-2}) boils down to sharp estimates of $\nabla v_{k,j}^{(i)}$ and $\nabla\varsigma_k$ on $\partial B(p_{k,j}^{(1)},r)$. The estimate for $\nabla v_{k,j}^{(i)}$ is established in   Lemma \ref{lem-Dv-kj}, and the following lemma provides the estimates for $\nabla\varsigma_k$ (see (\ref{Dsigma_k-1}) for comparison).

\begin{lem}\label{lem-C1-est-re}
	
	For any $\theta\in(0,r)$ small enough, it holds
	\begin{align}\label{GRF-est-re}
	\begin{split}
	\varsigma_k-\bar{\varsigma}_k=\sum_{j=\tau+1}^m e^{-\frac{\lambda_{k,j}^{(1)}}{2}}&\Big(\sum_{h=1}^2\partial_{y_h}G(y,x)\big|_{y=p_{k,j}^{(1)}}b_{j,h}\Big)B_j+o(e^{-\frac{\lambda_{k,1}^{(1)}}{2}}) \\
	 &{\rm in}\ \; C^1\Big(M\setminus\bigcup_{j=1}^m B(p_{k,j}^{(1)},\theta)\Big).
	\end{split}
	\end{align}
\end{lem}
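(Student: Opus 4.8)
The plan is to rerun the Green's representation argument from the proof of Lemma \ref{lem-C1-est}, keeping track of one more order of smallness in each of the three pieces produced there. The new inputs are Corollary \ref{cor-A-kj-re}, Proposition \ref{prop-b0}, the decay of the high-frequency projections of $\varsigma_{k,j}$ in (\ref{xi-cos})--(\ref{xi-sin}), and, crucially, the hypothesis $\alpha_1<1$, which gives $\frac{1}{1+\alpha_1}>\frac12$, so that every error of size $O(e^{-\lambda_{k,1}^{(1)}/(1+\alpha_1)})$ is in fact $o(e^{-\lambda_{k,1}^{(1)}/2})$. Starting from $\varsigma_k(x)-\bar{\varsigma}_k=\int_M G(y,x)f_k(y){\rm d}\mu(y)$ and splitting exactly as in (\ref{J1+J2+J3}) into $\sum_{j=1}^m A_{k,j}G(p_{k,j}^{(1)},x)$, the part $J_2$ over the singular sources $1\le j\le \tau$, and the part $J_3$ over the regular points $\tau+1\le j\le m$, the first sum is $O(e^{-\lambda_{k,1}^{(1)}/(1+\alpha_1)})=o(e^{-\lambda_{k,1}^{(1)}/2})$ by Corollary \ref{cor-A-kj-re} together with $\lambda_{k,j}^{(1)}=\lambda_{k,1}^{(1)}+O(1)$.

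For $J_2$ I would Taylor expand $G(y,x)-G(p_{k,j}^{(1)},x)=\langle \partial_yG(y,x)|_{y=p_{k,j}^{(1)}},y-p_{k,j}^{(1)}\rangle+O(|y-p_{k,j}^{(1)}|^2)$. The quadratic remainder contributes $O(\int_{M_j}|y-p_{k,j}^{(1)}|^2|f_k|{\rm d}\mu)=O(e^{-\lambda_{k,j}^{(1)}/(1+\alpha_j)})=o(e^{-\lambda_{k,1}^{(1)}/2})$ since $\alpha_j\le \alpha_1<1$, just as in Lemma \ref{lem-C1-est}. For the linear term, rescaling $y=\epsilon_{k,j}z+p_{k,j}^{(1)}$ turns the integral into $\epsilon_{k,j}\int 8(1+\alpha_j)^2\frac{|z|^{2\alpha_j}}{(1+|z|^{2(1+\alpha_j)})^2}\langle \partial_yG(y,x)|_{y=p_{k,j}^{(1)}},z\rangle \varsigma_{k,j}(z)(1+o(1)){\rm d}z$; since $\langle \partial_yG,z\rangle$ has pure angular frequency one, the angular integration leaves only the frequency-one projections $\xi_1,\tilde{\xi}_1$ of $\varsigma_{k,j}$, which are $o(\epsilon_{k,1})$ by (\ref{xi-cos})--(\ref{xi-sin}), and since $\int_0^{\infty} r^{2\alpha_j+2}(1+r^{2(1+\alpha_j)})^{-2}{\rm d}r<\infty$ this yields $\epsilon_{k,j}\cdot o(\epsilon_{k,1})=o(\epsilon_{k,1}^2)=o(e^{-\lambda_{k,1}^{(1)}/(1+\alpha_1)})=o(e^{-\lambda_{k,1}^{(1)}/2})$. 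Hence $J_2=o(e^{-\lambda_{k,1}^{(1)}/2})$.

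For $J_3$ the computation is verbatim the one in the proof of Lemma \ref{lem-C1-est}: by Lemma \ref{lem-limit-1}(ii), symmetry, and the definition (\ref{Bj}) of $B_j$, the linear term at $p_{k,j}^{(1)}$ equals $e^{-\lambda_{k,j}^{(1)}/2}\big(\sum_{h=1}^2\partial_{y_h}G(y,x)|_{y=p_{k,j}^{(1)}}b_{j,h}\big)B_j+o(e^{-\lambda_{k,j}^{(1)}/2})$, while the quadratic remainder is $O(\lambda_{k,j}^{(1)}e^{-\lambda_{k,j}^{(1)}})=o(e^{-\lambda_{k,1}^{(1)}/2})$. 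Summing over $j$ and combining with the previous two steps gives the stated $C^0$ identity. To upgrade to $C^1$, I would set $w_k(x)=\varsigma_k(x)-\bar{\varsigma}_k-\sum_{j=\tau+1}^m e^{-\lambda_{k,j}^{(1)}/2}\big(\sum_{h=1}^2\partial_{y_h}G(y,x)|_{y=p_{k,j}^{(1)}}b_{j,h}\big)B_j$; on $M\setminus\bigcup_j B(p_{k,j}^{(1)},\theta/2)$ the subtracted function is $\Delta_g$-harmonic in $x$, so $\Delta_g w_k=-f_k=O(e^{-\lambda_{k,1}^{(1)}})$ there, whereas $\|w_k\|_{L^{\infty}}=o(e^{-\lambda_{k,1}^{(1)}/2})$ by the $C^0$ bound, and interior $W^{2,p}$ (hence $C^{1,\gamma}$) estimates give $\|w_k\|_{C^1(M\setminus\bigcup_j B(p_{k,j}^{(1)},\theta))}=o(e^{-\lambda_{k,1}^{(1)}/2})$, exactly the ``standard estimates'' invoked in Lemma \ref{lem-C1-est}.

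The main obstacle is the linear term of $J_2$: showing that the singular sources contribute $o(e^{-\lambda_{k,1}^{(1)}/2})$ rather than merely $o(e^{-\lambda_{k,1}^{(1)}/(2(1+\alpha_1))})$ requires the whole frequency-by-frequency ODE analysis of Section \ref{anal-pohozaev} (available only because $b_0=0$), careful bookkeeping of the lower-order $1+o(1)$ factors in the rescaled integrand --- each of which is again radial or of angular frequency one, hence still pairs only against $\xi_1,\tilde{\xi}_1$ or against an absolutely convergent radial weight --- and the strict inequality $\frac{1}{1+\alpha_1}>\frac12$.
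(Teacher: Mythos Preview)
Your proof is correct and follows the same route as the paper: split via Green's representation into $J_1+J_2+J_3$, use Corollary \ref{cor-A-kj-re} together with $\alpha_1<1$ for $J_1$, the frequency-one bounds (\ref{xi-cos})--(\ref{xi-sin}) for the linear part of $J_2$, and carry over the computation of $J_3$ from Lemma \ref{lem-C1-est}. One minor correction to your closing remark: the projection estimates (\ref{xi-cos})--(\ref{xi-sin}) are derived in the paper \emph{before} Proposition \ref{prop-b0} and rest only on the boundary information (\ref{GRF-est-2}) coming from the crude $A_{k,j}$ bound, not on $b_0=0$.
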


\begin{proof}[\textbf{Proof}]
	Using the same notations in (\ref{J1+J2+J3}) and (\ref{J3}), now we only need to show
	\begin{equation*}
		J_1=o(e^{-\frac{\lambda_{k,1}^{(1)}}{2}}),\quad J_2=o(e^{-\frac{\lambda_{k,1}^{(1)}}{2}}).
	\end{equation*}
	
	Indeed, from (\ref{A-kj-est-re}) and the assumption $0< \alpha_1<1$, we have
	\begin{equation}\label{J1-re}
		J_1=\sum_{j=1}^m A_{k,j}G(p_{k,j}^{(1)},x)=O(e^{-\frac{\lambda_{k,1}^{(1)}}{1+\alpha_1}})=o(e^{-\frac{\lambda_{k,1}^{(1)}}{2}})
	\end{equation}
	
	Recall that
	\begin{align*}
		J_2=&\sum_{j=1}^\tau\int_{M_j}\big(G(y,x)-G(p_{k,j}^{(1)},x)\big)f_k(y){\rm d}\mu(y)   \\
		=&\sum_{j=1}^\tau\int_{B(p_{k,j}^{(1)},r_0)}f_k(y)e^{\phi_j(y)}\langle\partial _yG(y,x)\big|_{y=p_{k,j}^{(1)}},y-p_{k,j}^{(1)}\rangle {\rm d}y +O(e^{-\frac{\lambda_{k,1}^{(1)}}{1+\alpha_1}})
	\end{align*}
	Based on (\ref{xi-cos}) and (\ref{xi-sin}), by the method similar to the proof of (\ref{K3-first-2-re}) in Lemma \ref{lem-PI1-right-re}, we have
	\begin{equation*}
		\int_{B(p_{k,j}^{(1)},r_0)}f_k(y)e^{\phi_j(y)}\langle\partial _yG(y,x)\big|_{y=p_{k,j}^{(1)}},y-p_{k,j}^{(1)}\rangle {\rm d}y=O(\epsilon_{k,j})o(\epsilon_{k,1})+o(\epsilon_{k,1}^2)=O(\epsilon_{k,1}^2).
	\end{equation*}
	Therefore,
	\begin{equation}\label{J2-re}
		J_2=O(e^{-\frac{\lambda_{k,1}^{(1)}}{1+\alpha_1}})=o(e^{-\frac{\lambda_{k,1}^{(1)}}{2}}).
	\end{equation}
	
	Consequently (\ref{GRF-est-re}) holds in $C^1\big(M\setminus\bigcup_{j=1}^m B(p_{k,j}^{(1)},\theta)\big)$ and the gradient estimate is
\begin{equation}\label{Dsigma_k-2}
\begin{split}
\nabla\varsigma_k(x)=\sum_{j=\tau+1}^m e^{-\frac{\lambda_{k,j}^{(1)}}{2}}\nabla_x\Big(\sum_{h=1}^2\partial_{y_h}G(y,x)\big|_{y=p_{k,j}^{(1)}}b_{j,h}\Big)B_j+o(e^{-\frac{\lambda_{k,1}^{(1)}}{2}}).
\end{split}
\end{equation}
	
\end{proof}

By the improved estimates of $\nabla v_{k,j}^{(i)}$ and $\nabla\varsigma_k$ in  (\ref{Dv-kj-est}) and (\ref{Dsigma_k-2}), we can estimate the left hand of (\ref{PI-2}) just like Lemma 4.7 in \cite{bart-4} or Appendix D in \cite{lin-yan-uniq} and the result is:
\begin{equation}\label{PT-l}
\begin{split}
{\rm (LHS)} \ {\rm of}\ (\ref{PI-2})=&-8\pi\bigg\{\sum_{l\neq j}^{\tau+1,\cdots,m}e^{-\frac{\lambda_{k,l}^{(1)}}{2}}\partial_{x_i}\Big(\sum_{h=1}^2\partial_{y_h}G(y,x)\big|_{y=p_{k,l}^{(1)}}b_{l,h}\Big)B_l\\
&\ \;+e^{-\frac{\lambda_{k,j}^{(1)}}{2}}\partial_{x_i}\Big(\sum_{h=1}^2\partial_{y_h}R(y,x)\big|_{x=y=p_{k,j}^{(1)}}b_{j,h}\Big)B_j\bigg\}+o(e^{-\frac{\lambda_{k,j}^{(1)}}{2}}).
\end{split}
\end{equation}

Finally we prove $b_{j,1}=b_{j,2}=0$ for all $j$.
\begin{prop}\label{prop-b1b2}

$b_{j,1}=b_{j,2}=0$, for all $j=\tau+1,\cdots,m$. In particular,
\begin{equation*}
\varsigma_{k,j}\rightarrow 0\quad {\rm in}\ \; C_{loc}(\mathbb{R}^2),\quad {\rm for\ \, all} \ \, j=1,\cdots,m.
\end{equation*}
\end{prop}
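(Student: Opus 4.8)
The plan is to extract from the vectorial Pohozaev identity a homogeneous linear system for the coefficients $(b_{j,1},b_{j,2})_{j=\tau+1}^{m}$ and to show that its matrix is a nondegenerate (diagonal) rescaling of $D^2f^*(p_{\tau+1},\dots,p_m)$, so that the hypothesis $\det D^2f^*(p_{\tau+1},\dots,p_m)\neq0$ forces all of them to vanish. Recall that Proposition~\ref{prop-b0} already gives $b_{j,0}=0$ for every $j$; hence by Lemma~\ref{lem-limit-1}, $\varsigma_{k,j}\to0$ in $C_{loc}(\mathbb R^2)$ for $1\le j\le\tau$, while for $\tau+1\le j\le m$ we have $\varsigma_{k,j}\to b_{j,1}\varphi_{j,1}+b_{j,2}\varphi_{j,2}$. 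So it remains to prove $b_{j,1}=b_{j,2}=0$ for $\tau+1\le j\le m$.

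First I would fix $j\in\{\tau+1,\dots,m\}$ and $i\in\{1,2\}$ and equate the two evaluations of the Pohozaev identity~\eqref{PI-2}: the right-hand side is~\eqref{PI-2-r} and the left-hand side is~\eqref{PT-l}. Dividing by $e^{-\lambda_{k,j}^{(1)}/2}$ and letting $k\to\infty$, one uses that all the data converge: $\rho_k\to\rho_*$, $p_{k,l}^{(1)}\to p_l$, $h_l(p_{k,l}^{(1)})\to h_l(p_l)$, $B_l\to$ a positive constant (see~\eqref{Bj}), $\nabla\phi_{k,j}(p_{k,j}^{(1)})\to\nabla G_j^*(p_j)$ and $D^2\phi_{k,j}(p_{k,j}^{(1)})\to D^2G_j^*(p_j)$ (see~\eqref{phi-kj}), and, crucially, the ratios $e^{-\lambda_{k,l}^{(1)}/2}/e^{-\lambda_{k,j}^{(1)}/2}$ converge to positive constants, because for $l,j>\tau$ (so $\alpha_l=\alpha_j=0$) the difference $\lambda_{k,l}^{(1)}-\lambda_{k,j}^{(1)}$ has a finite limit by~\eqref{uk-ave-2}. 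In the limit, \eqref{PI-2} becomes, for all $\tau+1\le j\le m$ and $i=1,2$,
\begin{equation*}
\sum_{l=\tau+1}^{m}\sum_{h=1}^{2}\mathcal M^{(i)}_{j,l,h}\,b_{l,h}=0,
\end{equation*}
a homogeneous linear system of $2(m-\tau)$ equations in the $2(m-\tau)$ unknowns $b_{l,h}$.

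The core step is to identify the coefficient matrix $\mathcal M$. Following the computations in Lin--Yan~\cite[Appendix~D]{lin-yan-uniq} and Bartolucci et al.~\cite[Lemmas~4.6--4.7]{bart-4}: the off-diagonal blocks ($l\neq j$) of $\mathcal M$ collect the terms $\partial_{x_i}\partial_{y_h}G(y,x)\big|_{y=p_l,\,x=p_j}$ appearing in~\eqref{PT-l}, which are precisely the mixed second derivatives of the coupling term $4\pi G(x_l,x_j)$ in~\eqref{f*}; the diagonal block ($l=j$) is the combination of the Hessian $D^2(\log\tilde h_j+\phi_{k,j})(p_{k,j}^{(1)})$ coming from~\eqref{PI-2-r} with the Robin-type term $-8\pi\,\partial_{x_i}\partial_{y_h}R(y,x)\big|_{x=y=p_j}$ from~\eqref{PT-l}, which --- after using $\tilde h_j=h_je^{\phi_j}$, the standard relation between the intrinsic Robin function $R(x,x)$ and the regular part of the flat Green's function (this is where the conformal factor $\phi_j$ enters), and $\phi_{k,j}\to G_j^*$ up to an additive constant --- reassembles into the $j$-th diagonal block of the Hessian of $f^*$. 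Tracking the positive scalar prefactors ($B_l$ and the limits of the $e^{-\lambda}$-ratios), one obtains $\mathcal M=P\,D^2f^*(p_{\tau+1},\dots,p_m)\,Q$ with $P,Q$ invertible diagonal matrices. Since $\det D^2f^*(p_{\tau+1},\dots,p_m)\neq0$ by assumption, $\mathcal M$ is invertible, so $b_{j,1}=b_{j,2}=0$ for all $\tau+1\le j\le m$. Combined with Proposition~\ref{prop-b0} and Lemma~\ref{lem-limit-1}, this yields $\varsigma_{k,j}\to0$ in $C_{loc}(\mathbb R^2)$ for every $1\le j\le m$.

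The step I expect to be the main obstacle is precisely this identification of $\mathcal M$ with a nondegenerate rescaling of $D^2f^*$: it requires keeping careful track of the conformal factor $\phi_j$ hidden inside $\tilde h_j$ and inside the passage from $R(x,x)$ on $M$ to its flat counterpart, and of the fact that $\phi_{k,j}$ is built from $R(\cdot,p_{k,j}^{(1)})$ and the $G(\cdot,p_{k,l}^{(1)})$ with the weight $\rho_k/\sum_{l}(1+\alpha_l)\to8\pi$, so that the bookkeeping of constants is delicate even though each individual estimate is routine.
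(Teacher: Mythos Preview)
Your proposal is correct and follows essentially the same route as the paper: combine the two evaluations \eqref{PI-2-r} and \eqref{PT-l} of the Pohozaev identity \eqref{PI-2}, rescale by the exponential factors (the paper divides by $e^{-\lambda_{k,1}^{(1)}/2}$ and absorbs the ratios into the vector $\vec b$, while you divide each equation by $e^{-\lambda_{k,j}^{(1)}/2}$ and get two diagonal conjugations---these are equivalent), pass to the limit, and invoke the nondegeneracy of $D^2f^*$. The paper's proof is in fact terser than yours on the identification of the limiting matrix with $D^2f^*$; your remark that this bookkeeping (conformal factor in $\tilde h_j$, Robin term, the weight $\rho_k/\sum_l(1+\alpha_l)\to8\pi$) is the delicate point is accurate, and the paper simply refers this to \cite{bart-4,lin-yan-uniq}.
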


\begin{proof}[\textbf{Proof}]
	
Obviously, (\ref{PI-2}) together with (\ref{PI-2-r}) and (\ref{PT-l}) implies, for all $i=1,2$, and $j=\tau +1,\cdots,m$,
\begin{align}\label{PI-final}
\begin{split}
&e^{-\frac{\lambda_{k,j}^{(1)}}{2}}\Big(\sum_{h=1}^2 D_{h,i}^2(\log \tilde{h}_j+\phi_{k,j})(p_{k,j}^{(1)})b_{j,h}\Big)B_j\\
=&-8\pi\sum_{l\neq j}^{\tau+1,\cdots,m}e^{-\frac{\lambda_{k,l}^{(1)}}{2}}\partial_{x_i}\Big(\sum_{h=1}^2\partial_{y_h}G(y,x)\big|_{y=p_{k,l}^{(1)}}b_{l,h}\Big)B_l\\
&-8\pi e^{-\frac{\lambda_{k,j}^{(1)}}{2}}\partial_{x_i}\Big(\sum_{h=1}^2\partial_{y_h}R(y,x)\big|_{x=y=p_{k,j}^{(1)}}b_{j,h}\Big)B_j+o(e^{-\frac{\lambda_{k,j}^{(1)}}{2}}).
\end{split}
\end{align}

Set $\vec{b}=(\tilde{b}_{\tau+1,1}B_{\tau+1},\tilde{b}_{\tau+1,2}B_{\tau+1},\cdots,\tilde{b}_{m,1}B_m,\tilde{b}_{m,2}B_m)$, where
\begin{equation*}
\tilde{b}_{l,h}=\lim\limits_{k\to +\infty}\big(e^{\frac{\lambda_{k,j}^{(1)}-\lambda_{k,1}^{(1)}}{2}}b_{l,h}\big).
\end{equation*}
Then by (\ref{p_kj-location}) and letting $k\to+\infty$, we obtain that
\begin{equation}\label{b-vector}
D^2f^*(p_{\tau+1},\cdots,p_m)\cdot \vec{b}=0
\end{equation}
By using the non-degeneracy assumption $\det \big(D^2f^*(p_{\tau+1},\cdots,p_m)\big)\neq 0$, we conclude that
\begin{equation}\label{b=0}
b_{j,1}=b_{j,2}=0,\quad j=\tau+1,\cdots,m.
\end{equation}
Proposition \ref{prop-b1b2} is established.

\end{proof}

\begin{proof}[\textbf{Proof of Theorem \ref{main-theorem-2} }]
From
Lemma \ref{lem-limit-2} and Proposition \ref{prop-b0} $\varsigma_k$ tends to $0$ in
$C_{loc}(M\backslash\{p_1,\cdots,p_m\})$.
By Lemma \ref{lem-limit-1} and Proposition \ref{prop-b1b2}, we have
\begin{equation*}
	\varsigma_{k,j}\rightarrow  0\quad  {\rm in}\ \; C_{loc}(\mathbb{R}^2),\quad 1\leq j\leq m.
\end{equation*}
Theorem \ref{main-theorem-2} follows just like the last step of the proof of  Theorem \ref{main-theorem}.

\end{proof}

Finally, we finish to prove Theorem \ref{main-theorem-3} and Theorem \ref{main-theorem-4} about Dirichlet problems.

\begin{proof}[\textbf{Proof of Theorems \ref{main-theorem-3}, \ref{main-theorem-4} }]
	For the blowup solutions to (\ref{equ-flat}), the corresponding estimates as in section \ref{preliminary} have been also obtained in \cite{chen-lin}\cite{zhang2} for $\alpha_j\in\mathbb{R}^+\setminus\mathbb{N}$ and in \cite{chen-lin-sharp}\cite{zhang1}\cite{gluck} for $\alpha_j=0$. Those preliminary estimates have almost the same form except for $\phi_j=0$ and $K\equiv 0$, where $\phi_j$ are the conformal factor at $p_j$ and $K$ is the Gaussian curvature of $M$.
	
	Then, under the assumption of regularity about $\partial\Omega$ and $q_j\in \Omega$ $(1\leq j\leq N)$, \cite{ma-wei} has showed that the blowup points of (\ref{equ-flat}) are far away from $\partial\Omega$ via the moving plane method and the Pohozaev identities. Consequently, the terms coming from the boundary of domain are included in the error term. In other words, those boundary terms do not affect our argument.
	
	On the other hand, the vital part of estimates obtained in section \ref{difference}, \ref{anal-pohozaev} and \ref{pf-uni-2} only come from local analysis, Therefore, such results still work for the Dirichlet problem (\ref{equ-flat}).
	
	Thus, Theorem \ref{main-theorem-3} and Theorem \ref{main-theorem-4} can be proved as Theorem \ref{main-theorem} and Theorem \ref{main-theorem-4}, respectively. 
	
\end{proof}

\end{document}